\numberwithin{equation}{section}
\newcommand{\Md}{{\rm Mod}_{p,\smm}}
\newcommand{\Mdc}{{\rm Mod}_{p,\smm,c}}
\newcommand{\N}{\mathbb{N}}
\newcommand{\R}{\mathbb{R}}
\renewcommand{\AA}{\mathscr{A}}
\newcommand{\BB}{\mathscr{B}}
\newcommand{\FF}{\mathscr{F}}
\newcommand{\cB}{{\ensuremath{\mathcal B}}}
\newcommand{\cE}{{\ensuremath{\mathcal E}}}
\newcommand{\cF}{{\ensuremath{\mathcal F}}}
\newcommand{\cM}{{\ensuremath{\mathcal M}}}
\newcommand{\cP}{{\ensuremath{\mathcal P}}}
\newcommand{\mm}{{\mbox{\boldmath$m$}}}
\newcommand{\smm}{{\mbox{\scriptsize\boldmath$m$}}}
\newcommand{\eeta}{{\mbox{\boldmath$\eta$}}}
\newcommand{\ppi}{{\mbox{\boldmath$\pi$}}}
\newcommand{\rrho}{{\mbox{\boldmath$\rho$}}}
\newcommand{\ssigma}{{\mbox{\boldmath$\sigma$}}}
\newcommand{\seeta}{{\mbox{\scriptsize\boldmath$\eta$}}}
\newcommand{\sfd}{{\sf d}}
\newcommand{\sfi}{{\sf i}}
\newcommand{\sfr}{{\sf r}}
\newcommand{\sfs}{{\sf s}}
\newcommand{\sft}{{\sf t}}
\newcommand{\rmC}{{\mathrm C}}
\newcommand{\Kliminf}{K\kern-3pt-\kern-2pt\mathop{\rm lim\,inf}\limits}  
\newcommand{\Lip}{\mathop{\rm Lip}\nolimits}          
\renewcommand{\d}{{\mathrm d}}
\newcommand{\restr}[1]{\lower3pt\hbox{$|_{#1}$}}
\newcommand{\Haus}[1]{{\mathscr H}^{#1}}     
\newcommand{\Leb}[1]{{\mathscr L}^{#1}}      
\newcommand{\eps}{\varepsilon}  
\newcommand{\nchi}{{\raise.3ex\hbox{$\chi$}}}
\newcommand{\BorelSets}[1]{\BB(#1)}
\newcommand{\Probabilities}[1]{\mathscr P(#1)}          
\newenvironment{proof}{\removelastskip\par\medskip   
\noindent{\em Proof.}
\rm}{\penalty-20\null\hfill$\square$\par\medbreak}
\newtheorem{theorem}{Theorem}[section]
\newtheorem{corollary}[theorem]{Corollary}
\newtheorem{lemma}[theorem]{Lemma}
\newtheorem{proposition}[theorem]{Proposition}
\newtheorem{definition}[theorem]{Definition}
\newtheorem{example}[theorem]{Example}
\newtheorem{remark}[theorem]{Remark}
\newcommand{\e}{{\rm{e}}}                           
\renewcommand{\mm}{\mathfrak m}
\renewcommand{\smm}{{\mbox{\scriptsize$\mm$}}}
\newcommand{\esssup}{\mathop{\rm esssup}}
\newcommand{\ep}{\varepsilon}
\newcommand{\Ldp}{\mathcal{L}^p_+(X, \mm)}
\newcommand{\Curvesnonpara}[1]{\mathscr C(#1)}
\newcommand{\res}{\mathop{\hbox{\vrule height 7pt width .5pt depth 0pt
\vrule height .5pt width 6pt depth 0pt}}\nolimits}
\renewcommand{\Probabilities}[1]{\mathcal P(#1)}
\newcommand{\KK}{\mathscr K}
\newcommand{\AC}{\mathrm{AC}}
\newcommand{\Lipc}[1]{\AC^\infty_c([0,1];#1)}
\newcommand{\sfj}{\mathsf j}
\newcommand{\sfk}{\mathsf k}
\newcommand{\J}I
\newcommand{\cI}{{\ensuremath{\mathcal I}}}
\newcommand{\cII}{\mathfrak{I}}
\newcommand{\pgamma}{{\underline{\gamma}}}
\title{On the duality between $p$-modulus\\ and probability measures}
\begin{document}
\author{L.~Ambrosio\thanks{Scuola Normale Superiore, Pisa. email: \textsf{luigi.ambrosio@sns.it}},
Simone Di Marino\thanks{Scuola Normale Superiore, Pisa. email: \textsf{simone.dimarino@sns.it}},
Giuseppe Savar\'e\thanks{Dipartimento di Matematica, Universit\`a di Pavia. email: \textsf{giuseppe.savare@unipv.it}}
}
\maketitle

\begin{abstract} Motivated by recent developments on calculus in metric measure spaces $(X,\sfd,\mm)$, we prove a general
duality principle between Fuglede's notion \cite{Fuglede} of $p$-modulus for families of finite Borel measures in $(X,\sfd)$ and probability measures
with barycenter in $L^q(X,\mm)$, with $q$ dual exponent of $p\in (1,\infty)$. We apply this general duality principle to study null sets for families
of parametric and non-parametric curves in $X$. In the final part of the paper we provide a new proof, independent of optimal transportation, 
of the equivalence of notions of weak upper  gradient based on $p$-podulus (\cite{Koskela-MacManus}, \cite{Shanmugalingam00}) and 
suitable probability measures in the space of curves (\cite{Ambrosio-Gigli-Savare11}, \cite{Ambrosio-Gigli-Savare12}).
\end{abstract}

\tableofcontents

\section{Introduction}

The notion of $p$-modulus ${\rm Mod}_p(\Gamma)$ for a family $\Gamma$ of curves has been introduced by Beurling and Ahlfors in \cite{Ahlfors} and then
it has been deeply studied by Fuglede in \cite{Fuglede}, who realized its significance in Real Analysis and proved that Sobolev $W^{1,p}$ functions
$f$ in $\R^n$ have representatives $\tilde{f}$ that satisfy
$$
\tilde f(\gamma_b)-\tilde f(\gamma_a)=\int_a^b \langle\nabla
f(\gamma_t),\gamma_t'\rangle \,\d t
$$
for ${\rm Mod}_p$-almost every absolutely continuous curve $\gamma:[a,b]\to\R^n$. Recall that if $\Gamma$ is a family of absolutely continuous
curves, ${\rm Mod}_p(\Gamma)$ is defined by
\begin{equation}\label{eq:modulus_euclideo}
{\rm Mod}_p(\Gamma):=\inf\left\{\int_{\R^n} f^p\, \d x:\ \text{$f:\R^n\to [0,\infty]$ Borel, $\int_\gamma f\geq 1$ for all $\gamma\in\Gamma$}\right\}. 
\end{equation}
It is obvious that this definition (as the notion of length) is parametric-free, because the curves are involved in the definition only
through the curvilinear integral $\int_\gamma f$. Furthermore, if
$\gamma:I\to X$, writing the curvilinear integral as $\int_I
f(\gamma_t)|\dot\gamma_t| \, \d t$,
with $|\dot\gamma|$ equal to the metric derivative, 
one realizes immediately that this notion makes sense for absolutely continuous curves in a general metric space $(X,\sfd)$, if we add a reference measure
$\mm$ to minimize the integral $\int f^p\,\d\mm$. The notion, denoted by ${\rm Mod}_{p,\mm}(\cdot)$, 
actually extends to families of continuous curves with finite length, which do have a Lipschitz 
reparameterization. As in \cite{Fuglede}, one can even go a step further, realizing
that the curvilinear integral in \eqref{eq:modulus_euclideo} can be written as 
$$
\int_X f \,\d J\gamma,
$$ 
where $J\gamma$ is  a positive finite measure in $X$, the image under $\gamma$ of the measure
$|\dot\gamma|\Leb{1}\res I$, namely
\begin{equation}\label{eq:Jintro}
J\gamma(B)=\int_{\gamma^{-1}(B)}|\dot\gamma_t|\,\d t\qquad\forall B\in\BorelSets{X}
\end{equation} 
(here $\Leb{1}\res I$ stands for the Lebesgue measure on $I$). It follows that one can define in a similar
way the notion of $p$-modulus for families of measures in $X$.

In more recent times, Koskela-Mac Manus \cite{Koskela-MacManus} and then Shanmugalingham \cite{Shanmugalingam00} used the
$p$-modulus to define the notion
of $p$-weak upper gradient for a function $f$, namely Borel functions $g:X\to [0,\infty]$ such that the upper gradient
inequality
\begin{equation}\label{eq:Fugledeabs}
|f(\gamma_b)-f(\gamma_a)|\leq\int_\gamma g
\end{equation}
holds along ${\rm Mod}_{p,\smm}$-almost every absolutely continuous curve $\gamma:[a,b]\to X$. 
This approach leads to a very successful Sobolev space theory in metric measure spaces $(X,\sfd,\mm)$, see for
instance \cite{Heinonen07,Bjorn-Bjorn11} for a very nice account of it. 

Even more recently, the first and third author and Nicola Gigli introduced (first in \cite{Ambrosio-Gigli-Savare11} for $p=2$, and then
in \cite{Ambrosio-Gigli-Savare12} for general $p$) another notion of weak upper gradient, based on suitable
classes of probability measures on curves, described more in detail in the final section of this paper. Since the axiomatization in \cite{Ambrosio-Gigli-Savare11}
is quite different and sensitive to parameterization, it is a surprising fact that the two approaches lead essentially to the same Sobolev
space theory (see Remark~5.12 of \cite{Ambrosio-Gigli-Savare11} for a more detailed discussion, also in connection with Cheeger's 
approach \cite{Cheeger00}, and Section~9 of this paper). We say
essentially because, strictly speaking, the axiomatization of \cite{Ambrosio-Gigli-Savare11} is invariant (unlike Fuglede's approach) under modification
of $f$ in $\mm$-negligible sets and thus provides only Sobolev regularity and not absolute continuity along almost every curve; however, choosing properly 
representatives in the Lebesgue equivalence class, the two Sobolev spaces can be identified.

Actually, as illustrated in \cite{Ambrosio-Gigli-Savare11}, \cite{Ambrosio-Gigli-Savare11bis}, \cite{Gigli12} (see also
the more recent work \cite{Bate}, in connection with Rademacher's theorem and Cheeger's Lipschitz charts), differential
calculus and suitable notions of {\it tangent} bundle in metric measure spaces can be developed in a quite natural way
using probability measures in the space of absolutely continuous curves.

With the goal of understanding deeper connections between the ${\rm Mod}_{p,\smm}$ and the probabilistic 
approaches, we show in this paper that the theory of $p$-modulus has a ``dual'' point of view,
based on suitable probability measures $\ppi$ in the space of curves;  the main difference with respect to \cite{Ambrosio-Gigli-Savare11} is that,
as it should be, the curves here are non-parametric, namely $\ppi$ should be rather thought as measures in a quotient space of curves. 
Actually, this and other technical aspects (also relative to tightness, since much better compactness properties are available at the level
of measures) are simplified if we consider $p$-modulus of families of
measures in $\mathcal{M}_+(X)$ (the space of all nonnegative and finite Borel measures on $X$), 
rather than $p$-modulus of families of curves:
if we have a family $\Gamma$ of curves, we can consider the family $\Sigma=J(\Gamma)$ and derive a representation formula
for ${\rm Mod}_{p,\smm}(\Gamma)$, see Section~\ref{sec:weakgrad}.
Correspondingly, $\ppi$ will be a measure on the Borel subsets of
$\mathcal{M}_+(X)$. 

For this reason, in Part I of this paper we investigate the duality at this level of generality, considering a family $\Sigma$ of measures in $\mathcal{M}_+(X)$.
Assuming only that $(X,\sfd)$ is complete and separable and $\mm$ is finite, we
prove in Theorem~\ref{tmain} that for all Borel sets
$\Sigma\subset\mathcal{M}_+(X)$ (and actually in the more general class of
Souslin sets) the following duality formula holds:
\begin{equation}\label{eq:mainresult}
\bigl[{\rm Mod}_{p,\smm}(\Sigma)\bigr]^{1/p}=\sup_{\seeta}\frac{\eeta(\Sigma)}{c_q(\eeta)}=
\sup_{\seeta(\Sigma)=1}\frac 1{c_q(\eeta)},
\qquad
\frac 1p+\frac 1q=1.
\end{equation}
Here the supremum in the right hand side runs in the class of Borel probability measures $\eeta$ in $\mathcal{M}_+(X)$ with
barycenter in $L^q(X,\mm)$, so that
$$
\text{there exists $g\in L^q(X,\mm)$ s.t.}\quad
\int \mu(A)\,\d\eeta(\mu)=\int_A g\,\d\mm\qquad\forall A\in\BorelSets{X};
$$
the constant $c_q(\eeta)$ is then defined as the $L^{q}(X,\mm)$ norm of the
``barycenter'' $g$. A byproduct of our proof is the fact that ${\rm Mod}_{p,\smm}$ is a Choquet capacity
in $\mathcal{M}_+(X)$, see Theorem~\ref{tmain}.
In addition, we can prove in Corollary~\ref{cor:cnes} existence of maximizers in \eqref{eq:mainresult} and obtain out of this necessary and sufficient
optimality conditions, both for $\eeta$ and for the minimal $f$ involved in the definition of $p$-modulus analogous
to \eqref{eq:modulus_euclideo}. See also Remark~\ref{rem:saturated}
for a simple application of these optimality conditions involving pairs $(\mu,f)$ on which the constraint is saturated, namely
$\int_X f\,\d\mu=1$. 

We are not aware of other representation formulas for ${\rm Mod}_{p,\smm}$, except in special cases: for instance in the case of
the family $\Gamma$ of curves connecting two disjoint compact sets $K_0$, $K_1$ of $\R^n$, the modulus in \eqref{eq:modulus_euclideo} 
equals (see \cite{Ziemer} and also \cite{Shanmugalingham01} for the extension to metric measure spaces, as well as
\cite{Shanmugalingham10} for related results) the capacity
$$
C_p(K_0,K_1):=\inf\left\{\int_{\R^n} |\nabla u|^p\,\d x: \text{$u\equiv 0$ on $K_0$, $u\equiv 1$ on $K_1$}\right\}.
$$
In the conformal case $p=n$, it can be also proved that $C_n(K_0,K_1)^{-1/(n-1)}$ equals ${\rm Mod}_{n/(n-1)}(\Sigma)$, where 
$\Sigma$ is the family of the Hausdorff measures $\Haus{n-1}\res S$, with $S$ separating $K_0$ from $K_1$ (see \cite{Ziemer1}). 

In the second part of the paper, after introducing in Section~6 the relevant space of curves $\AC^q([0,1];X)$ and a suitable
quotient space $\Curvesnonpara X$ of non-parametric nonconstant curves, we show how the basic duality result of Part I 
can be read in terms of measures and moduli in spaces of curves. For non-parametric curves this is accomplished
in Section~7, mapping curves in $X$ to measures in $X$ with the canonical map $J$ in \eqref{eq:Jintro}; in this case, the condition
of having a barycenter in $L^q(X,\mm)$ becomes
\begin{equation}\label{eq:nonparabari}
\biggl|\int \int_0^1 f(\gamma_t)|\dot\gamma_t|\,\d t \,\d\ppi(\gamma)\biggr|\leq C\|f\|_{L^p(X,\mm)}\qquad
\forall f\in \rmC_b(X).
\end{equation}
Section 8 is devoted instead to the case of parametric curves, where the relevant map curves-to-measures is
$$
M\gamma (B):=\Leb{1}(\gamma^{-1}(B))\qquad\forall B\in\BorelSets{X}.
$$
In this case the condition
of having a \emph{parametric} barycenter in $L^q(X,\mm)$ becomes
\begin{equation}\label{eq:parabari}
\biggl|\int \int_0^1 f(\gamma_t)\,\d t \,\d\ppi(\gamma)\biggr|\leq C\|f\|_{L^p(X,\mm)}\qquad
\forall f\in \rmC_b(X).
\end{equation}
The parametric barycenter can of course be affected by reparameterizations; a key result, stated in Theorem~\ref{thm:repara},
shows that suitable reparameterizations improve the parametric barycenter from $L^q(X,\mm)$
to $L^\infty(X,\mm)$. Then, in Section~9 we discuss the notion of null set of curves according to
 \cite{Ambrosio-Gigli-Savare11} and \cite{Ambrosio-Gigli-Savare12}
(where \eqref{eq:parabari} is strengthened by requiring $\bigl|\int f(\gamma_t)\,\d\ppi(\gamma)\bigr|\leq C\|f\|_{L^1(X,\mm)}$
for all $t$, for some $C$ independent of $t$)
and, under suitable invariance and stability assumptions on the set of curves, we compare this notion
with the one based on $p$-modulus. Eventually, in Section~10 we use there results to
prove that if 
a Borel function $f:X\to \R$ has a continuous representative along
a collection $\Gamma$ of the set $\mathrm{AC}^\infty([0,1];X)$ of the 
Lipschitz parametric curves with
$\mathrm{Mod}_{p,\mm}\bigl(M(\mathrm{AC}^\infty([0,1];X)\setminus\Gamma)\bigr)=0$,
then it is possible to find a distinguished 
$\mm$-measurable representative 
$\tilde f$ such that $\mm(\{f\neq \tilde f\})=0$ and
$\tilde f$ is absolutely continuous along
$\mathrm{Mod}_{p,\mm}$-a.e.-nonparametric curve.
By using these results to provide a more
direct proof of the equivalence of the two above mentioned notions of weak upper gradient, 
where different notions of null sets of curves are used to quantify exceptions to \eqref{eq:Fugledeabs}.

For the reader's convenience we collect in the next table and figure the main notation used, mostly
in the second part of the paper.

\medskip
\centerline{Main notation}
\smallskip
\halign{$#$\hfil\qquad &#\hfil\cr
{\cal L}^p_+(X,\mm) & Borel nonnegative functions $f:X\to [0,\infty]$ with $\int_X f^p\,\d\mm<\infty$\cr
L^p(X,\mm) & Lebesgue space of $p$-summable $\mm$-measurable functions\cr
\ell(\gamma) & Length of a parametric curve $\gamma$\cr 
\AC^q([0,1];X) & Space of parametric curves $\gamma:[0,1]\to X$ with $q$-integrable metric speed\cr
\AC_0([0,1];X) & Space of parametric curves with positive speed $\Leb{1}$-a.e. in $(0,1)$\cr
\AC^\infty_c([0,1];X)& Space of parametric curves with positive and constant speed\cr
\sfk & Embedding of $\bigl\{\gamma\in\AC([0,1];X):\ \ell(\gamma)>0\bigr\}$ into $\AC^\infty_c([0,1];X)$\cr
\Curvesnonpara{X} & Space of non-parametric and nonconstant curves, see Definition~\ref{def:nonparac}\cr
\sfi & Embedding of $\bigl\{\gamma\in\AC([0,1];X):\ \ell(\gamma)>0\bigr\}$ in $\Curvesnonpara X$\cr
\sfj & Embedding of $\Curvesnonpara{X}$ into $\AC^\infty_c([0,1];X)$\cr
\cr}

\begin{tikzcd}
 \Lipc{X}  \arrow[yshift=0.5ex]{r}{\pi_{\mathscr{C}}}
 & \Curvesnonpara{X} \arrow[yshift=-0.5ex]{l}{\sfj} \arrow{ddr}{\tilde{J}} \\
 \{\gamma\in\AC([0,1];X)\;:\;\ell(\gamma)>0\} \arrow{u}{\sfk} \arrow[swap]{ur}{\sfi}
\arrow[hookrightarrow]{d} \arrow{drr}{J} \\
 \rmC([0,1];X) \arrow{rr}{M} && \mathcal{M}_+(X)
\end{tikzcd}

\smallskip
\noindent {\bf Acknowledgement.} The first author acknowledges the support
of the ERC ADG GeMeThNES. The first and third author 
have been partially supported by PRIN10-11 grant from MIUR for the project Calculus of
Variations. All authors thank the reviewer, whose detailed comments led to an improvement of
the manuscript.

\part{{\large Duality between modulus and content}}

\section{Notation and preliminary notions}\label{sec:1}

In a topological Hausdorff space $(E,\tau)$,  we denote by $\mathscr{P}(E)$ the collection of all subsets of $E$, by
$\FF(E)$ (resp.~$\mathscr{K}(E)$) the collection of all closed
(resp.~compact) sets of $E$, by $\BorelSets{E}$ the $\sigma$-algebra of Borel sets
of $E$. We denote by $\rmC_b(E)$ the space of bounded  continuous functions on $(E,\tau)$, by $\mathcal{M}_+ (E) $, 
the set of $\sigma$-additive measures $\mu:\BorelSets{E}\to [0,\infty)$, by $\mathcal{P}(E)$ the subclass of
probability measures. 
For a set $F\subset E$ and $\mu\in\mathcal{M}_+(E)$ we shall respectively denote by $\chi_F:E\to \{0,1\}$ the characteristic 
function of $F$ and by  $\mu\res F$ the measure $\chi_F\mu$, if $F$ is $\mu$-measurable. For a Borel map $L:E\to F$ we shall
denote by $L_\sharp:\mathcal{M}_+(E)\to \mathcal{M}_+(F)$ the induced push-forward operator between Borel measures, namely
$$
L_\sharp\mu(B):=\mu\bigl(L^{-1}(B)\bigr)\qquad\forall \mu\in\mathcal{M}_+(E),\,\,B\in\BorelSets{F}.
$$
We shall denote by $\N=\{0,1,\ldots\}$ the natural numbers, 
by $\Leb{1}$ the Lebesgue measure on the real line.

\subsection{Polish spaces}

Recall that $(E,\tau)$ is said to be Polish if there exists a distance $\rho$ in $E$ which induces the topology
$\tau$ such that $(E,\rho)$ is complete and separable. Notice that the inclusion of $\mathcal{M}_+(E)$ in 
$(\rmC_b(E))^*$ may be strict, because we are not making compactness
or local compactness assumptions on $(E,\tau)$.  Nevertheless, if $(E,\tau)$ is Polish we can always endow $\mathcal{M}_+(E)$ with
a Polish topology $w\text{-}\rmC_b(E)$ whose convergent sequences are precisely the weakly convergent ones, i.e. sequences 
convergent in the duality with $\rmC_b(E)$. Obviously this Polish topology is unique. 
A possible choice,
which can be easily adapted from the corresponding 
Kantorovich-Rubinstein distance on $\Probabilities E$ 
(see e.g.~\cite[\S 8.3]{Bogachev} or 
\cite[Section~7.1]{Ambrosio-Gigli-Savare08})
is to consider the duality with bounded and
Lipschitz functions
\begin{align*}
  \rho_{K\kern-1pt R}(\mu,\nu):=\sup\Big\{\Big|\int_E f\,\d\mu-\int_E f\,\d\nu\Big|: \,&
  f\in \Lip_b(E),\ 
  \sup_E|f|\le 1,
    \\& |f(x)-f(y)|\le \rho(x,y)
  \quad \forall\, x,y\in E\Big\}.
\end{align*}

\subsection{Souslin, Lusin and analytic sets, Choquet theorem}

Denote by $\N^\infty$ the collection of all infinite sequences of natural numbers and by $\N^\infty_0$
the collection of all finite sequences $(n_0,\ldots,n_i)$, with $i\geq
0$ and $n_i$ natural numbers. Let $\AA\subset\mathscr{P}(E)$ containing the empty set (typical examples
are, in topological spaces $(E,\tau)$, the classes $\FF(E)$, $\mathscr K(E)$, $\BB(E)$). 
We call table of sets in $\AA$ a 
map $C$ associating to each
finite sequence $(n_0,\ldots,n_i)\in\N_0^\infty$ a set
$C_{(n_0,\ldots,n_i)}\in \AA$. 

 \begin{definition} [$\AA$-analytic sets] 
 A set $S\subset E$ is said to be 
  $\AA$-analytic
if there exists a table $C$ of sets in $\AA$ such that
$$S= \bigcup_{(n)\in\N^\infty } \bigcap_{i=0}^{\infty} C_{(n_0,\ldots, n_i)}. $$
  \end{definition}
  
 Recall that, in a topological space $(E,\tau)$, $\BB(E)$-analytic sets are \emph{universally measurable}
\cite[Theorem~1.10.5]{Bogachev}: this means that they are $\sigma$-measurable 
for any $\sigma\in{\mathcal M}_+(E)$. 
  
\begin{definition} [Souslin and Lusin sets]
  Let $(E,\tau)$ be an Hausdorff topological space. 
  $S\in\mathscr{P}(E)$ is said to be 
  a  Souslin (resp.~Lusin) set if it is the image of a Polish space
  under a continuous (resp.~continuous and injective) map.
\end{definition}

Even though the Souslin and Lusin properties for subsets of a topological space are intrinsic, i.e. they depend only
on the induced topology,  we will often use the diction ``$S$ Suslin subset of $E$'' and similar to emphasize the ambient space;
the Borel property, instead, is not intrinsic, since $S\in\BB(S)$ if we endow $S$ with the induced topology.
Besides the obvious stability with respect to transformations through 
continuous (resp. continuous and injective) maps,  the class of Souslin (resp. Lusin) sets enjoys nice properties, 
detailed below. 

\begin{proposition}\label{proplusin}
The following properties hold:
\begin{itemize}
\item[(i)]  In a Hausdorff topological space $(E,\tau)$, Souslin sets are $\FF(E)$-analytic;
\item[(ii)] if $(E,\tau)$ is a Souslin space (in particular 
if it is a Polish or a Lusin space),  the notions of Souslin and $\FF(E)$-analytic sets concide
and in this case Lusin sets are Borel and Borel sets are Souslin; 
\item[(iii)] if $E$, $F$ are Souslin spaces and $f:E\to F$ is a Borel injective map, then $f^{-1}$ is Borel;
\item[(iv)] if $E$, $F$ are Souslin spaces and $f:E\to F$ is a Borel map, then $f$ maps Souslin sets to Souslin sets.
\end{itemize}
\end{proposition}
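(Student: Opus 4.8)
The plan is to read Proposition~\ref{proplusin} as a compendium of classical facts from the theory of analytic (Souslin) sets, for which the standard reference is Bogachev \cite{Bogachev}; I will indicate the mechanism behind each item and isolate the one genuinely deep input. The unifying device is the representation of a Souslin set as a continuous image of the Baire space $\N^\infty$, together with the stability of the Souslin class under the three operations (countable union, countable intersection, and the Souslin operation itself). For (i), the key point is that every Polish space is a continuous image of $\N^\infty$: if $S$ is Souslin, say $S=f(P)$ with $P$ Polish and $f$ continuous, composing with such a surjection $\N^\infty\to P$ one writes $S=h(\N^\infty)$ with $h$ continuous. Denoting by $N_{(n_0,\dots,n_i)}\subset\N^\infty$ the basic cylinder of sequences beginning with $(n_0,\dots,n_i)$, I would set $C_{(n_0,\dots,n_i)}:=\overline{h(N_{(n_0,\dots,n_i)})}\in\FF(E)$ and prove
$$S=\bigcup_{(n)\in\N^\infty}\bigcap_{i=0}^\infty C_{(n_0,\dots,n_i)}.$$
The inclusion $\subseteq$ is immediate; for $\supseteq$ one fixes $x$ in the right-hand side along a sequence $(n)$, uses that the cylinders $N_{(n_0,\dots,n_i)}$ shrink to the point $(n)$ in a complete metric on $\N^\infty$ to select $\sigma^{(i)}\in N_{(n_0,\dots,n_i)}$ with $h(\sigma^{(i)})\to x$, and concludes $x=h((n))\in S$ by continuity of $h$. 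This exhibits $S$ as $\FF(E)$-analytic.

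For (ii), one inclusion is (i). Conversely, in a Souslin space closed sets are themselves Souslin, and the Souslin operation applied to a table of Souslin sets again produces a Souslin set (via closure of the Souslin class under countable unions and intersections), so every $\FF(E)$-analytic set is Souslin. That Borel sets are Souslin I would obtain by checking that the family $\mathcal A$ of sets $A$ for which both $A$ and $E\setminus A$ are Souslin is a $\sigma$-algebra: closure under complement is obvious, and under countable unions one uses that $\bigcup_nA_n$ is Souslin while its complement $\bigcap_n(E\setminus A_n)$ stays Souslin. Since in a metrizable Souslin space open sets are $F_\sigma$ and closed sets are Souslin, $\mathcal A$ contains the topology, hence $\mathcal A\supseteq\BorelSets{E}$, and in particular every Borel set is Souslin. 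Finally, that Lusin sets are Borel is precisely the Lusin--Souslin theorem, which I would cite from \cite{Bogachev} rather than reprove.

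Items (iii) and (iv) rest on the same circle of ideas. Statement (iii) is exactly the injective form of the Lusin--Souslin theorem: a Borel injective image of a Souslin (equivalently standard Borel) space is Borel, so $f^{-1}$ carries Borel sets to Borel sets; again I would cite it. For (iv) I would argue through graphs. Writing $\{V_k\}$ for a countable base of $F$ separating points, the graph $G_f=\bigcap_k\bigl[(f^{-1}(V_k)\times V_k)\cup((E\setminus f^{-1}(V_k))\times(F\setminus V_k))\bigr]$ is Borel in the Souslin space $E\times F$, hence Souslin by (ii). Given a Souslin set $S\subseteq E$, the set $G_f\cap(S\times F)$ is Souslin (products and finite intersections of Souslin sets are Souslin), and $f(S)=\pi_F\bigl(G_f\cap(S\times F)\bigr)$ is its image under the continuous projection $\pi_F$, therefore Souslin.

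The only genuinely hard ingredient is the Lusin--Souslin theorem underlying the ``Lusin implies Borel'' half of (ii) and the whole of (iii); everything else is bookkeeping with the Souslin operation and the Baire-space representation. I therefore expect the main obstacle to be entirely packaged into that citation, while the remaining steps amount to verifying the stability and $\sigma$-algebra closure properties stated above.
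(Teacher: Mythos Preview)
Your proposal is essentially correct but far more detailed than what the paper does: the paper's proof consists solely of pointers to specific theorems in Bogachev \cite{Bogachev} (Theorems~6.6.8, 6.7.2, 6.7.3, 6.8.6 and Corollary~6.6.7), with no argument sketched. You instead outline the classical mechanisms (Baire-space representation, stability of the Souslin class, the graph trick), deferring only the Lusin--Souslin theorem to citation. That is a legitimate and more informative route, and it correctly isolates the single deep input.

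A few small imprecisions are worth tightening. In~(i), your sequence argument ``select $\sigma^{(i)}$ with $h(\sigma^{(i)})\to x$'' tacitly uses first countability of $E$, which is not assumed; the clean fix is to argue directly that $x\in\bigcap_{U\ni h((n))}\overline U=\{h((n))\}$ (the last equality holding in any Hausdorff space). In~(ii), you invoke metrizability of the Souslin space to get open sets as $F_\sigma$, but Souslin spaces need not be metrizable; instead note that if $E=f(P)$ with $P$ Polish and $U\subset E$ open, then $f^{-1}(U)$ is Polish and $U=f(f^{-1}(U))$ is Souslin directly. In~(iv), your countable-base formula for the graph presumes $F$ is second countable, which again may fail; replace it by the diagonal argument $G_f=(f\times\mathrm{id}_F)^{-1}(\Delta_F)$, which is Borel since $\Delta_F$ is closed in $F\times F$. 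With these adjustments your sketch goes through.
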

\begin{proof} We quote \cite{Bogachev} for all these statements: (i) is proved in Theorem~6.6.8; in connection with (ii), the equivalence
between Souslin and $\FF(E)$-analytic sets is proved in Theorem~6.7.2, the fact that Borel sets are Souslin
in Corollary~6.6.7 and the fact that Lusin sets are Borel in Theorem~6.8.6; finally, (iii) and (iv) are proved in
Theorem~6.7.3.
\end{proof}

Since in Polish spaces $(E,\tau)$ we have at the same time tightness of finite Borel measures
and coincidence of Souslin and $\FF(E)$-analytic sets, the measurability of $\BB(E)$-analytic sets yields in particular that
\begin{equation}\label{eq:inner1}
\sigma(B)=\sup\left\{\sigma(K)\;:\; K\in\mathscr{K}(E),\,\,K\subset B\right\}
\quad\text{for all $B\subset E$ Souslin, $\sigma\in\mathcal{M}_+(E)$.}
\end{equation}

We will need a property analogous to \eqref{eq:inner1} for capacities
\cite{Dellacherie-Meyer}, whose definition is recalled below.

\begin{definition}[Capacity] A set function $\mathfrak{I} : \mathscr{P} (E) \to [0,\infty]$ is said to be a \emph{capacity} if:
\begin{itemize}
\item $\mathfrak{I} $ is nondecreasing and, whenever $(A_n)\subset\mathscr{P}(E)$ is nondecreasing, the following holds
$$ \lim_{n \to \infty} \mathfrak{I} (A_n) = \mathfrak{I}  \left( \bigcup_{n=0}^{\infty} A_n \right);$$
\item if $(K_n)\subset \mathscr{K}(E)$ is nonincreasing, the following holds:
$$ \lim_{n \to \infty} \mathfrak{I} (K_n) = \mathfrak{I}  \left( \bigcap_{n=0}^{\infty} K_n \right).$$
\end{itemize}
A set $B\subset E$ is said to be $\mathfrak{I}$-capacitable if $\mathfrak{I} (B)= \sup\limits_{K \in\mathscr{K}(E),\,K\subset B} \mathfrak{I} (K)$.
\end{definition}

\begin{theorem}[Choquet]
  \label{thm:Choquet}
  {\rm (\cite[Thm~28.III]{Dellacherie-Meyer})} Every $\KK(E)$-analytic set is capacitable.
\end{theorem}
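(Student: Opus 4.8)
The plan is to prove capacitability directly from the Souslin representation over compacta, following the classical scheme of Choquet, using only the two continuity axioms in the definition of capacity. Fix a $\KK(E)$-analytic set $S$ together with a table $C$ of \emph{compact} sets realizing it, so that $S=\bigcup_{\sigma\in\N^\infty}\bigcap_{i=0}^\infty C_{(\sigma_0,\ldots,\sigma_i)}$. First I would \emph{regularize} the table: replacing each $C_{(\sigma_0,\ldots,\sigma_i)}$ by the intersection $\bigcap_{j\le i}C_{(\sigma_0,\ldots,\sigma_j)}$, which is again compact because finite intersections of compact sets in a Hausdorff space are compact, I may assume that the table is nonincreasing along extensions, i.e. $C_{s^\frown k}\subset C_s$, without altering $S$. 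Writing $C_\sigma:=\bigcap_i C_{(\sigma_0,\ldots,\sigma_i)}$ for $\sigma\in\N^\infty$, this gives $S=\bigcup_\sigma C_\sigma$.

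The heart of the argument is the construction of compact subsets of $S$ indexed by \emph{bound sequences}. For $\beta\in\N^\infty$ I set $K_\beta:=\bigcup\{C_\sigma:\sigma\le\beta \text{ coordinatewise}\}$ and, for each level $i$, the finite union of compacta $L^i_\beta:=\bigcup\{C_{(k_0,\ldots,k_i)}:k_j\le\beta_j,\ j\le i\}$, which is compact. By the regularity $C_{s^\frown k}\subset C_s$ the sequence $(L^i_\beta)_i$ is nonincreasing, and the key identity to establish is $\bigcap_i L^i_\beta=K_\beta$; in particular $K_\beta$ is compact and $K_\beta\subset S$. The nontrivial inclusion $\bigcap_i L^i_\beta\subset K_\beta$ is exactly where compactness is essential: a point lying in every $L^i_\beta$ lies in $C_s$ for a sequence of finite strings $s$ whose coordinates are bounded by $\beta$, and a K\"onig-type argument on the finitely branching tree of such strings extracts an infinite branch $\sigma\le\beta$ with the point in every $C_{(\sigma_0,\ldots,\sigma_i)}$, hence in $C_\sigma$. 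I expect this combinatorial--topological step to be the main obstacle.

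It then remains to select $\beta$ so that $\mathfrak I(K_\beta)$ is close to $\mathfrak I(S)$, and here the two continuity properties enter. I introduce the partially bounded sets $A_s:=\bigcup\{C_\sigma:\sigma_j\le n_j,\ j\le i\}$ for $s=(n_0,\ldots,n_i)$, noting $A_\emptyset=S$ and $A_s=\bigcup_k A_{s^\frown k}$ with the union nondecreasing in $k$, whence $\sup_k\mathfrak I(A_{s^\frown k})=\mathfrak I(A_s)$ by continuity of $\mathfrak I$ along nondecreasing sequences. Fixing any $a<\mathfrak I(S)$ and a summable sequence $\eps_i>0$ with $\sum_i\eps_i\le\mathfrak I(S)-a$ (no constraint if $\mathfrak I(S)=\infty$), I would choose $n_0,n_1,\ldots$ inductively so that $\mathfrak I(A_{(n_0,\ldots,n_i)})\ge a+\sum_{j>i}\eps_j$ at every stage; the displayed supremum identity makes each choice possible, and the telescoping estimate keeps $\mathfrak I(A_{(n_0,\ldots,n_i)})\ge a$ for all $i$. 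Since $A_{(n_0,\ldots,n_i)}\subset L^i_\beta$ for the resulting $\beta=(n_i)_i$, monotonicity gives $\mathfrak I(L^i_\beta)\ge a$, and continuity of $\mathfrak I$ along the nonincreasing sequence of compacta $L^i_\beta\downarrow K_\beta$ yields $\mathfrak I(K_\beta)=\lim_i\mathfrak I(L^i_\beta)\ge a$. As $K_\beta$ is a compact subset of $S$ and $a<\mathfrak I(S)$ is arbitrary, this proves $\sup\{\mathfrak I(K):K\in\KK(E),\ K\subset S\}\ge\mathfrak I(S)$; the reverse inequality is immediate from monotonicity, so $S$ is capacitable. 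A single argument thus covers both the finite and the infinite value of $\mathfrak I(S)$.
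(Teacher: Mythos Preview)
The paper does not prove this theorem; it simply quotes it from \cite[Thm~28.III]{Dellacherie-Meyer} and uses it as a black box. Your proposal is the classical proof of Choquet's capacitability theorem exactly as it appears in Dellacherie--Meyer: regularize the compact Souslin scheme to be monotone along extensions, use K\"onig's lemma on the finitely branching tree $\{s:s_j\le\beta_j\}$ to identify $\bigcap_i L^i_\beta$ with the compact set $K_\beta\subset S$, and then invoke the two continuity axioms to select the bound $\beta$ level by level so that $\mathfrak I(K_\beta)\ge a$ for any prescribed $a<\mathfrak I(S)$. All steps are correct; the only cosmetic remark is that the $\eps_i$ bookkeeping is unnecessary, since from $\sup_k\mathfrak I(A_{s^\frown k})=\mathfrak I(A_s)>a$ you can directly pick $k$ with $\mathfrak I(A_{s^\frown k})>a$ and iterate.
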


\section{$(p,\mm)$-modulus ${\rm Mod}_{p,\smm}$} 

In this section $(X,\tau)$ is a topological 
space and $\mm$ is a fixed Borel and nonnegative reference measure, not necessarily finite or
$\sigma$-finite.

Given a power $p\in [1,\infty)$, we set
\begin{equation}\label{eq:callp}
\mathcal{L}^p_+ (X, \mm) := \left\{ f:X \to [0,\infty] \; : \; \text{$f$ Borel,    $\int_X f^p \,\d\mm < \infty$} \right\}.
\end{equation}
We stress that, unlike $L^p(X,\mm)$, this space is not quotiented under any equivalence relation; however we will
keep using the notation
$$ \| f\|_p := \left( \int_X |f|^p \, \d \mm \right)^{1/p}  $$
as a seminorm on $\Ldp$ and a norm in $L^p(X,\mm)$.

Given $\Sigma \subset \mathcal{M}_+$ we define (with the usual convention $\inf\emptyset=\infty$)
\begin{equation} \label{eqn:mod2}
{ \rm Mod}_{p,\smm} (\Sigma) := \inf \left\{ \int_X f^p \, \d \mm \, : \, f \in \Ldp,\,\, \int_X  f \, \d \mu \geq 1 \; \; \text{ for all } \mu \in \Sigma \right\},
\end{equation}
\begin{equation} \label{eqn:mod2c}
{ \rm Mod}_{p,\smm,c} (\Sigma) := \inf \left\{  \int_X f^p \, \d \mm \, : \, f \in \rmC_b(X,[0,\infty)),\,\,  \int_X  f \, \d \mu \geq 1 \; \; \text{ for all } \mu \in \Sigma \right\}.
\end{equation}
Equivalently, if $0<{\rm Mod}_{p,\smm}(\Sigma)\leq\infty$, we can say that ${ \rm Mod}_{p,\smm}(\Sigma)^{-1}$ is the least
number $\xi\in [0,\infty)$ such that the following is true
\begin{equation}\label{eqn:mod2dual} 
\left( \inf_{ \mu \in \Sigma} \int_X f \, \d \mu \right)^p
\leq  \xi \int_X f^p \, \d \mm \quad 
\text{for all $f \in \Ldp$},
\end{equation}
and similarly there is also an equivalent definition for ${ \rm Mod}_{p,\smm,c} (\Sigma)^{-1}$. \\
Notice that the infimum in \eqref{eqn:mod2c} is unchanged if we restrict the minimization to nonnegative functions $f\in \rmC_b(X)$. As a consequence,
since the finiteness of $\mm$ provides the inclusion of this class of functions in $\Ldp$, we get 
${ \rm Mod}_{p,\smm,c} (\Sigma) \geq { \rm Mod}_{p,\smm} (\Sigma)$ whenever $\mm$ is finite.
Also, whenever $\Sigma$ contains the null measure, we have ${\rm Mod}_{p,\smm,c} (\Sigma) \geq { \rm Mod}_{p,\smm} (\Sigma)=\infty$.

\begin{definition}[${\rm Mod}_{p,\smm}$-negligible sets]\label{dfn:modnull} A set $\Sigma \subset\mathcal{M}_+ (X)$ is said to be 
${\rm Mod}_{p,\smm}$-negligible if ${\rm Mod}_{p,\smm}(\Sigma)=0$. 
\end{definition}

A property $P$ on $\mathcal{M}_+ (X)$ is said to hold ${\rm Mod}_{p,\smm}$-a.e. if the set 
$$
\left\{\mu\in\mathcal{M}_+(X):\ \text{$P(\mu)$ fails}\right\}
$$
is ${\rm Mod}_{p,\smm}$-negligible. With this terminology, we can also write
\begin{equation} \label{eqn:mod2bis}
{ \rm Mod}_{p,\smm} (\Sigma) = \inf \left\{  \int_X f^p \, \d \mm \, : \, \int_X  f \, \d \mu \geq 1 \; \; \text{ for ${\rm Mod}_{p,\smm}$-a.e. } \mu \in \Sigma \right\}.
\end{equation}

We list now some classical properties that will be useful in the sequel, most them are well known and simple to prove, but
we provide complete proofs for the reader's convenience. 

\begin{proposition}\label{prop:prop} The set functions $A\subset\mathcal{M}_+(X)\mapsto\Md(A)$,
$A\subset\mathcal{M}_+(X)\mapsto\Mdc(A)$ satisfy the following properties:
\begin{itemize}
\item[(i)] both are monotone and their $1/p$-th power is subadditive;
\item[(ii)] if $g\in\Ldp$ then $ \int_X g \, \d \mu < \infty$ for
  $\Md$-almost every $\mu$; conversely, if 
  $\Md(A)=0$ then there exists $g\in \Ldp$ such that 
  $\int_X g\,\d\mu=\infty$ for every $\mu\in A$.
\item[(iii)] if $(f_n) \subset \Ldp $ converges in $L^p(X,\mm)$ seminorm to $f \in \Ldp$,  
there exists a subsequence $(f_{n(k)})$ such that
\begin{equation} \label{convqo}
\int_X f_{n(k)} \, \d \mu \rightarrow \int_X f \, \d \mu \qquad \Md \text{-a.e. in $\mathcal{M}_+(X)$};
\end{equation}
\item[(iv)] if $p>1$, for every $\Sigma\subset\mathcal{M}_+(X)$ with $\Md(\Sigma)<\infty$ there exists $f\in \Ldp$, unique up to $\mm$-negligible sets, 
such that $\int_Xf\,\d\mu\geq 1$ $\Md$-a.e. on $\Sigma$ and $\|f\|^p_p=\Md(\Sigma)$;
\item[(v)] if $p>1$ and $A_n$ are nondecreasing subsets of $\mathcal{M}_+(X)$ then $\Md (A_n) \uparrow \Md (\cup_n A_n)$;
\item[(vi)] if $K_n$ are nonincreasing compact subsets of
  $\mathcal{M}_+(X)$ then $\Mdc (K_n) \downarrow \Mdc (\cap_nK_n)$.
\item[(vii)] Let $A\subset \mathcal M_+(X)$,
  $F:A\to (0,\infty)$ be a Borel map, and
  $B=\big\{F(\mu)\mu:\ \mu\in A\big\}$. 
  If $\Md(A)=0$ then $\Md(B)=0$ as well.
\end{itemize}
\end{proposition}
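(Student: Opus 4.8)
The plan is to prove items (i)--(vii) in the order stated, since each later one rests on the earlier ones, with (v) being the only genuinely delicate point. Monotonicity in (i) is immediate from \eqref{eqn:mod2}--\eqref{eqn:mod2c}: if $\Sigma_1\subset\Sigma_2$, every competitor for $\Sigma_2$ is a competitor for $\Sigma_1$, so the admissible class only enlarges and the infimum drops. For subadditivity of the $1/p$-th power, given families $\Sigma_n$ I would take near-optimal competitors $f_n$ with $\|f_n\|_p\le\Md(\Sigma_n)^{1/p}+\eps 2^{-n}$ and set $f:=\sum_n f_n$ (a \emph{finite} sum in the continuous case, so that $f\in\rmC_b$); since all summands are nonnegative, $\int f\,\d\mu\ge\int f_n\,\d\mu\ge 1$ on each $\Sigma_n$, so $f$ is admissible for the union, and Minkowski's inequality gives $\|f\|_p\le\sum_n\|f_n\|_p$, hence $\Md(\bigcup_n\Sigma_n)^{1/p}\le\sum_n\Md(\Sigma_n)^{1/p}$. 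For (ii), the forward part follows because $\eps g$ is admissible for $A_\infty:=\{\mu:\int g\,\d\mu=\infty\}$ for every $\eps>0$, so $\Md(A_\infty)\le\eps^p\|g\|_p^p\to 0$; the converse is obtained by choosing for each $k$ a competitor $f_k$ for $A$ with $\|f_k\|_p\le 2^{-k}$ and putting $g:=\sum_k f_k\in\Ldp$, so that $\int g\,\d\mu=\sum_k\int f_k\,\d\mu=\infty$ on $A$. Item (iii) is the ``Borel--Cantelli'' companion: passing to a fast subsequence with $\|f_{n(k)}-f\|_p\le 2^{-k}$ and applying (ii) to $G:=\sum_k|f_{n(k)}-f|\in\Ldp$ gives $\int G\,\d\mu<\infty$, hence $\int|f_{n(k)}-f|\,\d\mu\to 0$ for $\Md$-a.e.\ $\mu$, which is \eqref{convqo}.

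For (iv) I would first observe that changing $f$ on an $\mm$-negligible set $N$ affects $\int f\,\d\mu$ only for $\mu$ in $\{\mu:\mu(N)>0\}$, which is $\Md$-negligible (test with $j\chi_N$, whose $L^p$ seminorm vanishes, and use the subadditivity in (i)). Hence the a.e.\ constraint set $K:=\{f\in L^p(X,\mm):\ f\ge 0,\ \int f\,\d\mu\ge 1\ \text{for }\Md\text{-a.e.}\ \mu\in\Sigma\}$ is well defined in $L^p(X,\mm)$, nonempty since $\Md(\Sigma)<\infty$, and convex. It is also closed: if $f_j\to f$ in $L^p$ with $f_j\in K$, then by (iii) a subsequence satisfies $\int f_j\,\d\mu\to\int f\,\d\mu$ $\Md$-a.e., and since a countable union of $\Md$-null sets is $\Md$-null by (i), the constraint survives in the limit. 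By \eqref{eqn:mod2bis} we have $\Md(\Sigma)=\inf_{f\in K}\|f\|_p^p$, so, $L^p(X,\mm)$ being uniformly convex for $p>1$, the set $K$ has a unique element of minimal norm, which is the required $f$.

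Item (v) is the crux. Monotonicity already gives $\Md(A_n)\uparrow L\le\Md(\bigcup_n A_n)$, and the reverse inequality only needs treatment when $L<\infty$. Let $f_n$ be the optimal functions from (iv) for $A_n$, so $\|f_n\|_p^p=\Md(A_n)\le L$; being bounded in the reflexive space $L^p(X,\mm)$, a subsequence satisfies $f_{n(k)}\weakto f$, and weak lower semicontinuity gives $\|f\|_p^p\le L$. The difficulty is that weak convergence controls nothing about $\int f_{n(k)}\,\d\mu$, so I would invoke Mazur's lemma to build finite convex combinations $g_j$ of the tail $\{f_{n(k)}:k\ge j\}$ with $g_j\to f$ strongly in $L^p$. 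Here the monotonicity of $(A_n)$ is essential: fixing $m$, for all large $j$ every $f_{n(k)}$ entering $g_j$ has $n(k)\ge m$, hence $A_m\subset A_{n(k)}$ and $\int f_{n(k)}\,\d\mu\ge 1$ $\Md$-a.e.\ on $A_m$, so the same holds for the convex combination $g_j$; applying (iii) to $g_j\to f$ then yields $\int f\,\d\mu\ge 1$ $\Md$-a.e.\ on $A_m$, for every $m$, hence $\Md$-a.e.\ on $\bigcup_m A_m$. Thus $f$ is admissible and $\Md(\bigcup_n A_n)\le\|f\|_p^p\le L$. The main obstacle in the whole proposition is exactly this passage from a weak limit to an admissible competitor, which is where weak compactness, Mazur's lemma, and item (iii) must be combined.

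Finally, (vi) exploits that for $f\in\rmC_b(X)$ the map $\mu\mapsto\int f\,\d\mu$ is continuous on $\mathcal{M}_+(X)$. Assuming $\Mdc(K)<\infty$ (otherwise monotonicity closes the case, with $K:=\bigcap_n K_n$), I pick a near-optimal $f\in\rmC_b(X,[0,\infty))$ for $K$ with $\|f\|_p^p\le\Mdc(K)+\eps$ and, for small $\delta>0$, form the open set $U:=\{\mu:\int f\,\d\mu>1-\delta\}\supset K$; the compact sets $K_n\setminus U$ are nonincreasing with empty intersection, so $K_N\subset U$ for some $N$ and hence for all $n\ge N$. For such $n$ the function $f/(1-\delta)\in\rmC_b$ is admissible for $K_n$, whence $\Mdc(K_n)\le(1-\delta)^{-p}(\Mdc(K)+\eps)$; letting first $\eps\to 0$ and then $\delta\to 0$ gives $\lim_n\Mdc(K_n)\le\Mdc(K)$, the reverse being monotonicity. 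As for (vii), it is immediate from (ii): choosing $g\in\Ldp$ with $\int g\,\d\mu=\infty$ for every $\mu\in A$, the positivity of $F$ yields $\int g\,\d(F(\mu)\mu)=F(\mu)\int g\,\d\mu=\infty$ for each $\nu=F(\mu)\mu\in B$, so $\eps g$ is admissible for $B$ with $\|\eps g\|_p\to 0$, i.e.\ $\Md(B)=0$.
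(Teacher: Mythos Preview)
Your proof is correct and, for items (i)--(v) and (vii), follows essentially the same route as the paper's: monotonicity, sum-of-competitors subadditivity, the Borel--Cantelli construction for (ii) and (iii), convexity/closedness plus uniform convexity of $L^p$ for (iv), and the weak-compactness/Mazur/item-(iii) combination for (v). Two minor differences are worth noting: in (iv) you are more explicit than the paper about why the constraint passes to the quotient $L^p(X,\mm)$ (changes on $\mm$-null sets affect only a $\Md$-null family of $\mu$'s), and in (v) you use the optimal $f_n$ from (iv) and the a.e.\ formulation \eqref{eqn:mod2bis} directly, whereas the paper takes near-optimal $f_n$ and at the end adds a small $g_\eps$ admissible on the exceptional $\Md$-null set to obtain a strict competitor for $\bigcup_n A_n$. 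Both variants are equivalent.

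The one genuine divergence is (vi). The paper argues through the dual formulation \eqref{eqn:mod2dual}: it fixes $\phi_\eps\in\rmC_b(X)$ with $\|\phi_\eps\|_p=1$ and $\inf_K\int\phi_\eps\,\d\mu\ge\Mdc(K)^{-1/p}-\eps$, then uses compactness of $K_n$ and continuity of $\mu\mapsto\int\phi_\eps\,\d\mu$ to pass $\min_{K_n}\to\min_K$, treating the case $\Mdc(K)=0$ separately. Your argument is more direct and avoids the dual reformulation: you take a near-optimal competitor $f$ for $K$, use continuity of $\mu\mapsto\int f\,\d\mu$ to get an open $U\supset K$ on which $\int f\,\d\mu>1-\delta$, and then use the finite-intersection property of the compact sets $K_n\setminus U$ to trap some $K_N$ inside $U$, making $f/(1-\delta)$ admissible for $K_N$. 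Both proofs rely on the same ingredient (continuity of integration against $\rmC_b$ functions on $\mathcal{M}_+(X)$); yours has the advantage of not needing a separate treatment of the case $\Mdc(K)=0$, while the paper's has the advantage of not needing to produce a global competitor for the larger sets $K_n$.
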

\begin{proof} (i) Monotonicity is trivial. For the subadditivity, if we take $\int_X f\,\d\mu\geq 1$ on $A$ and $\int_Xg\,\d\mu\geq 1$ on $B$, 
then $\int_X (f+g)\,\d\mu \geq 1$ on $A \cup B$, hence
$\Md( A \cup B)^{1/p} \leq\|f+g\|_p\leq\|f\|_p+\|g\|_p$. Minimizing over $f$ and $g$ we get the subadditivity.

(ii) Let us consider the set where the property fails:
$$ \Sigma_g = \left\{ \mu \in \mathcal{M}_+ (X) \; : \; \int_X g \, \d \mu = \infty \right\}. $$
Then it is clear that $\Md(\Sigma_g)\leq\|g\|^p_p$ but $\Sigma_g = \Sigma_{\lambda g}$ for every $ \lambda >0$ 
and so we get that $\Sigma_g$ is $\Md$-negligible. Conversely, if $\Md(A)=0$ for every $n\in\N$ we can find $g_n\in \Ldp$
with
$\int_X g_n\,\d\mu\ge1$ for every $\mu\in A$ and 
$\int_X g_n^p\le 2^{-np}$. Thus $g:=\sum_n g_n$ satisfies the 
required properties.

(iii) Let $f_{n(k)}$ be a subsequence such that $\|f- f_{n(k)}\|_p\leq 2^{-k}$ so that if we set
$$ g (x) = \sum_{k =1}^{\infty} | f(x) - f_{n(k)} (x) |$$
we have that $g \in \Ldp$ and $\|g\|_p\leq 1$; in particular we have, for (ii) above, that $\int_X g \, \d \mu$ is finite for $\Md$-almost every $\mu$.
For those $\mu$ we get
$$ \sum_{k=1}^\infty \int_X |f - f_{n(k)} | \, \d \mu <\infty $$
and thus we get \eqref{convqo}.

(iv) Since we can use \eqref{eqn:mod2bis} to compute ${\rm Mod}_{p,\smm}(\Sigma)$, we obtain from (ii) and (iii)
that the class of admissible functions $f$ is a convex and closed subset of the Lebesgue space $L^p$.
Hence, uniqueness follows by the strict convexity of the $L^p$ norm.

(v) By the monotonicity, it is clear that $\Md (A_n)$ is an increasing sequence and that $\Md(\cup_n A_n) \geq \lim \Md(A_n) =:C$. 
If $C= \infty$ there is nothing to prove, otherwise, we need to show that $\Md (\cup_nA_n) \leq C$; let $(f_n) \subset \Ldp$ be a sequence of functions 
such that $ \int_X f_n\,\d\mu \geq 1$ on $A_n$ and $\|f_n\|^p_p \leq \Md (A_n) + \frac 1n$.
In particular we get that $\limsup_n \|f_n\|^p_p =C < \infty$ and so, possibly extracting a subsequence, we can assume that
$f_n$ weakly converge to some $f \in \Ldp$. By Mazur lemma we can find convex combinations
$$ \hat{f}_n  = \sum_{k=n}^{\infty} \lambda_{k,n} f_k $$
such that $\hat{f}_n$ converge strongly to $f$ in $L^p(X, \mm)$; furthermore we have that $\int_X f_k\,\d\mu \geq 1$ on $A_n$ if $k\geq n$ and so 
$$ \int_X\hat{f}_n\,\d\mu = \sum_{k=n}^{\infty} \lambda_{k,n} \int_Xf_k\,\d\mu\geq 1 \qquad \text{ on }A_n.$$
By (iii) in this proposition we obtain a subsequence $n(k)$ and a $\Md$-negligible set $\Sigma \subset \mathcal{M}_+(X)$ such that 
$\int_X\hat{f}_{n(k)}\,\d\mu\to\int_Xf\,\d\mu$ outside $\Sigma$; in particular $\int_X f\,\d\mu \geq 1$ on $\cup_nA_n \setminus \Sigma$.
Then, by the very definition of $\Md$-negligible set, for every $\ep>0$ we can find $g_{\ep} \in \Ldp$ such that $\|g_{\ep} \|^p_p \leq \ep$ and 
$ \int_Xg_\ep\,\d\mu \geq 1$ on $\Sigma$, so that we have $ \int_X(f+ g_{\ep}) \,\d\mu\geq 1$ on $\cup_n A_n$ and 
$$ \Md ( \cup_nA_n)^{1/p} \leq \| g_{\ep}+ f\|_p \leq  \|g_{\ep}\|_p+ \|f \|_p \leq \ep^{1/p} + \liminf \|f_n\|_p \leq \ep^{1/p} + C^{1/p}. $$
Letting $\ep \to 0$ and taking the $p$-th power the inequality $\Md(\cup_nA_n)\leq\sup_n\Md(A_n)$ follows.

(vi) Let $K=\cap_n K_n$. As before, by the monotonicity we get $\Mdc ( K) \leq \Mdc(K_n)$ and so calling $C$ the limit of $\Mdc(K_n)$ as $n$ goes to infinity, we only have to prove $\Mdc(K) \geq C$. First, we deal with the case $\Mdc(K)>0$: using the equivalent definition, let $\phi_{\ep}\in \rmC_b(X)$ be such that $\| \phi_{\ep} \|_p =1$ and
$$\inf_{\mu\in K} \int_X\phi_{\ep}\,\d\mu 
\geq \frac 1{\Mdc (K)^{1/p}}- \ep. $$
By the compactness of $K$ and of $K_n$, it is clear that the infimum above is a minimum and that
$\min\limits_{K_n} \int_X\phi_{\ep}\,d\mu \to \min\limits_K \int_X\phi_{\ep}\,\d\mu$,  so that
$$ \frac 1{C^{1/p}} = \lim_{n \to \infty} \frac 1 { \Mdc (K_n)^{1/p}} \geq \lim_{n \to \infty} \min_{\mu\in K_n} \int_X\phi_{\ep}\,\d\mu \geq \frac 1{\Mdc(K)^{1/p} } - \ep. $$ 
The case $\Mdc(K)=0$ is the same, taking $\phi_M\in \rmC_b(X)$ such
that $\|\phi_M\|_p=1$ and $\int_X\phi_M\,d\mu\geq M$ on $K$ and then
letting $M \to \infty$.

(vii) Since $\Md(A)=0$, by (ii) we find $g\in \Ldp$ such
that $\int_X g\,\d\mu=\infty$ for every $\mu\in A$: this yields
$\int_X g\,\d\big(F(\mu)\mu\big)=\infty$ for every $\mu\in A$, 
showing that  $\Md(B)=0$.
\end{proof}

\begin{remark} \label{rem:saturated} {\rm In connection with Proposition~\ref{prop:prop}(iv), in general the constraint $\int_Xf\,d\mu\geq 1$ is not saturated
by the optimal $f$, namely the strict inequality can occur for a subset $\Sigma_0$ with positive $(p,\mm)$-modulus. For instance, if
$X=[0,1]$ and $\mm$ is the Lebesgue measure, then
$$
{\rm Mod}_{p,\smm}\bigl(\{\Leb{1}\res [0,\frac 12],\Leb{1}\res [\frac 12,1],\Leb{1}\res [0,1]\}\bigr)=2^p\quad\text{and}\quad f\equiv 2,
$$
but $\int_X f\,\d\mm=2$.
However, we will prove using the duality formula $\Md=C^p_{p,\smm}$ that one can always find a subset $\Sigma'\subset\Sigma$ (in the example
above $\Sigma\setminus\Sigma'=\{\Leb{1}\res [0,1]\}$) with the same
$(p,\mm)$-modulus satisfying $\int_Xf\,d\mu=1$ for all $\mu\in\Sigma'$, see the comment made after Corollary~\ref{cor:cnes}.

On the other hand, if the measures in $\Sigma$ are non-atomic, using just the definition of $p$-modulus, one can find instead a family $\Sigma'$ of \emph{smaller}
measures with the same modulus as $\Sigma$ on which the constraint is saturated: suffices to find, for any $\mu\in\Sigma$, a smaller
measure $\mu'$ (a subcurve, in the case of measures associated to curves) satisfying $\int_X f\,d\mu'=1$. In the previous example the two constructions lead to the same result, but the two procedures
are conceptually quite different.}
\end{remark}

Another important property is the tightness of $\Md$ in $\mathcal{M}_+(X)$: it will play a crucial role 
in the proof of Theorem \ref{tmain} to prove the inner regularity 
of $\Md$ for arbitrary Souslin sets.

\begin{lemma}[Tightness of $\Md$] \label{lem:tightness}
If $(X,\tau)$ is Polish and $\mm\in\mathcal{M}_+(X)$, 
for every $\ep>0$ there exists $E_{\ep}\subset\mathcal{M}_+(X)$ compact such that $\Md(E_\ep^c)\leq\ep$.
\end{lemma}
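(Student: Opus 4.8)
The plan is to reduce the assertion to constructing, for each $\ep>0$, a single test function. Given any $f\in\Ldp$, the set $E:=\{\mu\in\mathcal M_+(X):\int_X f\,\d\mu\le 1\}$ has complement $E^c=\{\mu:\int_X f\,\d\mu>1\}\subseteq\{\mu:\int_X f\,\d\mu\ge1\}$, so $f$ is admissible in the definition of $\Md(E^c)$ and hence $\Md(E^c)\le\|f\|_p^p$. Thus it suffices to produce $f\in\Ldp$ with $\|f\|_p^p\le\ep$ whose sublevel set $E_\ep:=\{\mu:\int_X f\,\d\mu\le1\}$ is compact in $\mathcal M_+(X)$; then $E_\ep$ is the required set.

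To force compactness I will invoke Prokhorov's theorem in the Polish space $(\mathcal M_+(X),w\text{-}\rmC_b(X))$: a family of finite measures is relatively compact as soon as it has uniformly bounded total mass and is tight. The function $f$ must therefore make $\int_X f\,\d\mu\le1$ entail both. I exploit that $\mm$, being finite on a Polish space, is itself tight, and pick compact sets $K_j\subset X$ with $\mm(X\setminus K_j)\le\delta_j$ for a sequence $\delta_j\downarrow0$ to be fixed, constants $c_0>0$ and $c_j\uparrow\infty$, and set
\[
f:=c_0+\sum_{j\ge1}c_j\,\chi_{X\setminus K_j}.
\]
Being a constant plus a nonnegative series of indicators of open sets, $f$ is lower semicontinuous, so $\mu\mapsto\int_X f\,\d\mu$ is lower semicontinuous for weak convergence and $E_\ep$ is closed. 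From $f\ge c_0$ I get $\mu(X)\le 1/c_0$ on $E_\ep$ (uniform mass bound), and from $f\ge c_j\chi_{X\setminus K_j}$ I get $\mu(X\setminus K_j)\le 1/c_j\to0$ uniformly on $E_\ep$ (tightness). By Prokhorov $E_\ep$ is then relatively compact, and being closed it is compact.

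It remains to respect the $L^p$ budget. The triangle inequality in $L^p(\mm)$ gives
\[
\|f\|_p\le c_0\,\mm(X)^{1/p}+\sum_{j\ge1}c_j\,\mm(X\setminus K_j)^{1/p}\le c_0\,\mm(X)^{1/p}+\sum_{j\ge1}c_j\,\delta_j^{1/p}.
\]
Choosing $c_0$ with $c_0\,\mm(X)^{1/p}\le\tfrac12\ep^{1/p}$, then $c_j=j$ (say), and finally, using tightness of $\mm$, compacts $K_j$ with $\delta_j$ so small that $c_j\delta_j^{1/p}\le 2^{-j-1}\ep^{1/p}$, yields $\|f\|_p\le\ep^{1/p}$, hence $f\in\Ldp$ and $\Md(E_\ep^c)\le\|f\|_p^p\le\ep$. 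The degenerate case $\mm(X)=0$ is trivial: take $E_\ep=\{0\}$ and $f\equiv+\infty$.

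The step I expect to be most delicate is the simultaneous balancing in the definition of $f$: tightness of $E_\ep$ demands coefficients $c_j\to\infty$, while the $L^p(\mm)$-cost of $f$ must stay below $\ep^{1/p}$. The resolution rests entirely on the tightness of the finite reference measure $\mm$: it lets the large coefficients $c_j$ live on sets $X\setminus K_j$ of arbitrarily small $\mm$-measure, so they are essentially free in $L^p(\mm)$ yet still penalize any escape of mass of $\mu$ to infinity.
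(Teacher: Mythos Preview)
Your proof is correct and follows essentially the same approach as the paper: exploit tightness of the finite measure $\mm$ to obtain compacts $K_j\subset X$ with small complement, build a test function that is large on $X\setminus K_j$ yet has small $L^p(\mm)$ norm, and invoke Prokhorov to verify compactness of the resulting set in $\mathcal M_+(X)$. Your packaging---defining $E_\ep$ directly as a sublevel set of a lower semicontinuous $f$, so that closedness is automatic and the mass/tightness bounds on $\mu$ drop out of $f\ge c_0$ and $f\ge c_j\chi_{X\setminus K_j}$---is a bit slicker than the paper's version, which first specifies $E_k$ by explicit constraints on $\mu(X)$ and $\mu(K_n^c)$ and then builds a piecewise test function on nested annuli with a telescoping norm computation, but the underlying idea is identical.
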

\begin{proof} Since $(X,\tau)$ is Polish, by Ulam theorem we can find a nondecreasing family of sets $K_n\in\mathscr{K}(X)$ such that
$$\mm(K_n^c)\to 0.$$
We claim the existence of $\delta_n \downarrow 0$ such that, defining 
$$E_k =\left\{ \mu \in \mathcal{M}_+(X)\; : \; \mu(X) \leq k \text{ and } \mu(K_n^c) \leq \delta_n \;  \; \forall n \geq k \right\},$$
then $E_k$ is compact and $\Md(E_k^c) \to 0$ as $k$ goes to infinity. First of all it is easy to see that the family
$\{E_k\}$ is compact by Prokhorov  theorem,  because it is clearly tight. 

To evaluate $\Md(E_k^c)$ we have to build some functions.
Let $m_n=\mm(K_n^c)$, assume with no loss of generality that $m_n>0$ for all $n$, set
$a_n=(\sqrt{m_n}+\sqrt{m_{n+1}})^{-1/p}$ and note that this latter sequence is nondecreasing and diverging to $+\infty$; let
us now define the functions
$$ f_k(x) : =\begin{cases} 0  & \text{ if }x \in K_k, \\ a_n & \text{ if }x \in K_{n+1} \setminus K_n \text{ and }n\geq k, \\ + \infty &\text{otherwise.} \end{cases}$$
Now we claim that if we put $\delta_n=a_n^{-1}$ in the definition of the $E_k$'s we will have $\Md(E_k^c) \to 0$: in fact, if $\mu \in E_k^c$ then we have either $ \mu(X) >k $ or $\mu(K_n^c) > \delta_n$ for some $n \geq k$. In either case the integral of the function $f_k+ \frac 1k$ with respect to $\mu$ is greater or equal to $1$:
\begin{itemize}
\item if $\mu(X)>k$ then
$$\int_X \left(f_k+ \frac 1k \right) \, \d \mu \geq \int_X \frac 1k \, \d \mu \geq 1;$$
\item if $\mu(K_n^c)> \delta_n$ for some $n\geq k$ we have that
$$ \int_X \left(f_k + \frac 1k \right) \, \d \mu \geq \int_{K_n^c} f_k \, \d \mu \geq \int_{K_n^c} a_n \, \d \mu > \delta_n a_n=1.$$
\end{itemize}
So we have that $\Md(E_k^c) \leq \|f_k+ \frac 1k\|^p_p \leq ( \|f_k\|_p + \| 1/k \|_p )^p$. But
$$\int_X f_k^p \,\d\mm =\sum_{n=k}^{\infty}\int_{K_{n+1} \setminus K_n} a_n^p \, \d \mm = 
\sum_{n=k}^{\infty}  \frac{m_{n} - m_{n+1}} { \sqrt{ m_{n} } + \sqrt{m_{n+1}}} =  \sum_{n=k}^{\infty} ( \sqrt{m_n} - \sqrt{m_{n+1}} ) = \sqrt{m_k},$$
and so we have $\Md(E_k^c) \leq \Bigl( (m_k)^{1/(2p)} + (\mm(X))^{1/p}/k \Bigr)^p \to 0$.
\end{proof}

\section{Plans with barycenter in $L^q(X,\mm)$ and $(p,\mm)$-capacity}

In this section $(X,\tau)$ is Polish and $\mm\in {\mathcal M}_+(X)$ is a fixed reference measure.
We will endow ${\mathcal M}_+(X)$ with the Polish structure making the maps $\mu\mapsto\int_X f\,d\mu$,
$f\in \rmC_b(X)$, continuous, as described in Section~\ref{sec:1}.

\begin{definition}[Plans with barycenter in $L^q(X,\mm)$]
Let $q\in (1,\infty]$, $p=q'$.
We say that a Borel probability measure $\eeta$ on $\mathcal{M}_+(X)$ is a plan with barycenter
in $L^q(X,\mm)$
if there exists $c\in [0,\infty)$ such that
\begin{equation}\label{eq:defboundcomp}
\iint_X f \, \d \mu \, \d \eeta(\mu)  \leq  c \| f \|_p\qquad \forall f \in \Ldp.
\end{equation}
If $\eeta$ is a plan with barycenter in $L^q(X,\mm)$, we call $c_q(\eeta)$ the minimal $c$ in \eqref{eq:defboundcomp}.
\end{definition}

Notice that $c_q(\eeta)=0$ iff $\eeta$ is the Dirac mass at the null measure in $\mathcal{M}_+(X)$. We also used implicitly
in \eqref{eq:defboundcomp} (and in the sequel it will be used without further mention) the fact that $\mu\mapsto\int_X f\,\d\mu$
is Borel whenever $f\in\Ldp$. The proof can be achieved by a standard monotone class argument.

An equivalent definition of the class plans with 
barycenter in $L^q(X,\mm)$, which explains also the terminology we adopted, 
is based on the requirement that the barycenter Borel measure
\begin{equation}\label{eq:defunderlinemu}
\underline{\mu}:=\int \mu\,\d\eeta(\mu)
\end{equation}
is absolutely continuous w.r.t. $\mm$ and with a density $\rho$ in $L^q(X,\mm)$. Moreover, 
\begin{equation}\label{eq:cppirhoq}
c_q(\eeta)=\|\rho\|_q.
\end{equation}
Indeed, choosing $f=\chi_A$ in \eqref{eq:defboundcomp} gives $\underline{\mu}(A)\leq (\mm(A))^{1/p}$,
hence the Radon-Nikodym theorem provides the representation $\underline{\mu}=\rho\mm$ for some $\rho\in L^1(X,\mm)$.
Then, \eqref{eq:defboundcomp} once more gives
$$
\int_X \rho f\,\d\mm\leq c\|f\|_p\qquad\forall f\in L^p(X,\mm)
$$
and the duality of Lebesgue spaces gives $\rho\in L^q(X,\mm)$ and $\|\rho\|_q\leq c$. Conversely, if $\underline{\mu}$
has a density in $L^q(X,\mm)$, we obtain by H\"older's inequality that  \eqref{eq:defboundcomp} holds with
$c=\|\rho\|_q$.

Obviously, \eqref{eq:defboundcomp} still holds with $c=c_q(\eeta)$ for all $f\in \rmC_b(X)$, not necessarily nonnegative, when
$\eeta$ is a plan with good barycenter in $L^q(X,\mm)$. Actually the next proposition shows that we need only to check
the inequality \eqref{eq:defboundcomp} for $f\in \rmC_b(X)$ nonnegative.

\begin{proposition}\label{prop:monotone} Let $\eeta$ be a probability measure on $\mathcal{M}_+(X)$ such that
\begin{equation}\label{eq:disug} \int\int_Xf\,\d\mu \, \d \eeta(\mu) \leq c \|f\|_p\qquad\text{for all $f\in \rmC_b(X)$ nonnegative}
\end{equation}
for some $c\geq 0$. Then \eqref{eq:disug} holds, with the same constant $c$, also for every $f \in \Ldp$.
\end{proposition}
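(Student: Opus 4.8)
The plan is to move the whole inequality onto the barycenter measure and then to extend it from $\rmC_b(X)$ to $\Ldp$ by a truncation plus density argument. First I would introduce
\[
\underline{\mu}(B):=\int \mu(B)\,\d\eeta(\mu),\qquad B\in\BorelSets X,
\]
which is a well-defined Borel measure on $X$ (its $\sigma$-additivity follows from monotone convergence, and $\mu\mapsto\mu(B)$ is Borel by the same standard monotone class argument quoted after \eqref{eq:defboundcomp}). Choosing $f\equiv 1$ in \eqref{eq:disug} and using that $\mm$ is finite gives $\underline{\mu}(X)\le c\,\mm(X)^{1/p}<\infty$, so $\underline{\mu}\in\mathcal{M}_+(X)$. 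A monotone class argument, starting from indicators and passing to nonnegative Borel functions by monotone convergence, then yields the Fubini-type identity
\[
\int\int_X f\,\d\mu\,\d\eeta(\mu)=\int_X f\,\d\underline{\mu}\qquad\text{for every nonnegative Borel }f.
\]
Thus the hypothesis reads $\int_X f\,\d\underline{\mu}\le c\|f\|_p$ for all nonnegative $f\in\rmC_b(X)$, and the conclusion to be proved is exactly the same inequality for all $f\in\Ldp$ (recall that members of $\Ldp$ are nonnegative by \eqref{eq:callp}).

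Next I would reduce to bounded functions by truncation. For $f\in\Ldp$ set $f_M:=\min(f,M)$; then $f_M\in\Ldp$ is bounded, $f_M\uparrow f$ and $f_M^p\uparrow f^p$, so monotone convergence gives $\int_X f_M\,\d\underline{\mu}\to\int_X f\,\d\underline{\mu}$ and $\|f_M\|_p\to\|f\|_p$. Hence it suffices to establish $\int_X f\,\d\underline{\mu}\le c\|f\|_p$ for bounded Borel $f$, after which one lets $M\to\infty$.

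For a fixed bounded Borel $f$ with $0\le f\le M$ I would approximate by bounded continuous functions in an $L^p$ space tuned to both measures at once. Since $X$ is Polish, the finite Borel measure $\nu:=\underline{\mu}+\mm$ is regular and $\rmC_b(X)$ is dense in $L^p(X,\nu)$; choose $g_n\in\rmC_b(X)$ with $g_n\to f$ in $L^p(X,\nu)$. Replacing $g_n$ by $\max(0,\min(g_n,M))$---still in $\rmC_b(X)$ and pointwise no farther from $f$---we may assume $0\le g_n\le M$ and $g_n\to f$ simultaneously in $L^p(\underline{\mu})$ and in $L^p(\mm)$. Because $\underline{\mu}$ is finite, $L^p(\underline{\mu})$-convergence implies $L^1(\underline{\mu})$-convergence, so $\int_X g_n\,\d\underline{\mu}\to\int_X f\,\d\underline{\mu}$, while $\|g_n\|_p\to\|f\|_p$. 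Passing to the limit in $\int_X g_n\,\d\underline{\mu}\le c\|g_n\|_p$ gives the bound for bounded $f$, and the truncation step above then delivers it for all $f\in\Ldp$.

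The routine parts are the two monotone-convergence passages and the truncation; the points that deserve care---and where I expect the only real work---are the two ``standard facts'' invoked: the measurability of $\mu\mapsto\int_X f\,\d\mu$ together with the Fubini identity for $\underline{\mu}$ (both obtained by a monotone class theorem anchored at $f\in\rmC_b(X)$), and the density of $\rmC_b(X)$ in $L^p(X,\nu)$, which rests on the regularity of finite Borel measures on a metric space (Urysohn/Lusin). Neither is deep, so the proposition follows once these are recorded.
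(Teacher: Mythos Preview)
Your proof is correct, but it takes a somewhat different route from the paper's. Both arguments begin by introducing the barycenter measure $\underline{\mu}$ and rewriting the hypothesis as $\int_X f\,\d\underline{\mu}\le c\|f\|_p$ for nonnegative $f\in\rmC_b(X)$. From there, however, the paper proceeds via duality: it observes that this inequality says the linear functional $f\mapsto\int_X f\,\d\underline{\mu}$ is bounded on the dense subspace $\rmC_b(X)\subset L^p(X,\mm)$, so by Riesz representation there exists $\rho\in L^q(X,\mm)$ with $\|\rho\|_q\le c$ and $\underline{\mu}=\rho\mm$; the desired inequality for arbitrary $f\in\Ldp$ then follows in one line from H\"older. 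Your approach instead stays on the ``primal'' side, truncating and approximating $f$ directly by continuous functions in $L^p(X,\underline{\mu}+\mm)$. The paper's argument is shorter and extracts more structure (the actual $L^q$ density of the barycenter, which is exactly what is needed later for \eqref{eq:cppirhoq}); your argument is more elementary in that it avoids invoking the $L^p$--$L^q$ duality theorem, at the cost of a slightly longer approximation step. Both ultimately rest on the density of $\rmC_b(X)$ in $L^p$ of a finite measure on a Polish space.
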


\begin{proof} It suffices to remark that \eqref{eq:disug} gives
$$
\int_X f\,\d\underline{\mu}\leq c\|f\|_p\qquad\forall f\in \rmC_b(X),
$$
with $\underline{\mu}$ defined in \eqref{eq:defunderlinemu}. Again the duality of Lebesgue spaces provides $\rho\in L^q(X,\mm)$
with $\|\rho\|_q\leq c$ satisfying $\int_Xf\rho\,\d\mm=\int_Xf\,\d\underline{\mu}$ for all $f\in\rmC_b(X)$, hence $\underline{\mu}=\rho\mm$.
\end{proof}

There is a simple duality inequality, involving the minimization in \eqref{eqn:mod2} and a maximization among all $\eeta$'s with 
barycenter in $L^q(X,\mm)$. To see it, let's take $f \in \Ldp$ such that $\int f\,\d\mu\geq 1$ on 
$\Sigma\subset\mathcal{M}_+(X)$. 
Then, if $\Sigma$ is universally measurable we may take any plan $\eeta$ with barycenter in $L^q(X,\mm)$ to obtain
\begin{equation}\label{eq:sup1}
\eeta( \Sigma) \leq \int \int_X f\,\d\mu\, \d \eeta(\mu) \leq c_q(\eeta) \|f \|_p.
\end{equation}
In particular we have
\begin{equation}\label{eq:supp1}
\Md(\Sigma)=0\quad\Longrightarrow\quad \eeta(\Sigma)=0\qquad\text{for all $\eeta$ with barycenter in $L^q(X,\mm)$}.
\end{equation}
In addition, taking in \eqref{eq:sup1} the infimum over all the $f\in\Ldp$ such that $\int f\,\d\mu\geq 1$ on $\Sigma$
and, at the same time, the supremum with respect to all plans $\eeta$ with barycenter
in $L^q(X,\mm)$ and $c_q(\eeta)>0$,
we find 
\begin{equation}\label{eq:sup}
\sup_{c(\seeta)>0} \frac{\eeta( \Sigma)}{ c_q(\eeta)} \leq \Md( \Sigma)^{1/p}.
\end{equation}
The inequality \eqref{eq:sup} motivates the next definition.

\begin{definition}[$(p,\mm)$-content]
If $\Sigma\subset {\mathcal M}_+(X)$ is a universally measurable set we define
\begin{equation}\label{eq:defCp}
C_{p,\smm} ( \Sigma):= \sup_{c_q(\seeta)>0} \frac{\eeta( \Sigma)}{ c_q(\eeta)} .
\end{equation}
By convention, we set $C_{p,\smm}(\Sigma)=\infty$ if $0\in\Sigma$.
\end{definition}

A first important implication of \eqref{eq:sup} is that 
for any family $\mathcal F$ of plans $\eeta$ with 
barycenter in $L^q(X,\mm)$
\begin{equation}
  C:=\sup\left\{c_q(\eeta)\;:\;\eeta\in\mathcal
    F\right\}<\infty\quad\Longrightarrow\quad
  \cF\text{ is tight.}\label{eq:1}
\end{equation}
Indeed, $\eeta(E_{\ep^p}^c) \leq \ep c_q(\eeta) \leq C \ep$, where the $E_{\ep}\subset {\mathcal M}_+(X)$ are the compact sets provided by
Lemma~\ref{lem:tightness}. This allows to prove
existence of optimal $\eeta$'s in \eqref{eq:defCp}.

\begin{lemma} \label{lem:existsppi} Let $\Sigma\subset{\mathcal M}_+(X)$ be a universally measurable set such that
 $C_{p,\smm} (\Sigma) >0$ and $\sup_\Sigma\mu(X)<\infty$. Then there exists an optimal plan $\eeta$ with barycenter
in $L^q(X,\mm)$ in \eqref{eq:defCp}, and any optimal plan is concentrated on $\Sigma$. In particular
 $$ C_{p,\smm} ( \Sigma) = \frac{\eeta(\Sigma)}{c_q(\eeta)} = \frac 1{c_q(\eeta)}. $$
\end{lemma}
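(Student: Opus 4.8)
The plan is to realize the supremum in \eqref{eq:defCp} as the reciprocal of a minimum of $c_q$ over plans concentrated on $\Sigma$. First I would normalize. Given any plan $\eeta$ with $c_q(\eeta)>0$ and $\eeta(\Sigma)>0$, the restricted and rescaled measure $\eeta':=\eeta(\Sigma)^{-1}\,\eeta\res\Sigma$ is a Borel probability measure concentrated on $\Sigma$ whose barycenter satisfies $\underline{\eeta'}\le \eeta(\Sigma)^{-1}\underline{\eeta}$, so that $\eeta'(\Sigma)=1$, $c_q(\eeta')\le\eeta(\Sigma)^{-1}c_q(\eeta)$, and
\[
\frac{1}{c_q(\eeta')}=\frac{\eeta'(\Sigma)}{c_q(\eeta')}\ge\frac{\eeta(\Sigma)}{c_q(\eeta)}.
\]
Hence $C_{p,\smm}(\Sigma)=\sup\{1/c_q(\eeta):\ \eeta\in\mathcal P(\mathcal{M}_+(X)),\ \eeta\text{ concentrated on }\Sigma\}$, and it suffices to produce a plan $\eeta$ concentrated on $\Sigma$ that minimizes $c_q$: its reciprocal is then the content, and since $\eeta(\Sigma)=1$ the two displayed ratios coincide. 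I take a minimizing sequence $(\eeta_n)$ of plans concentrated on $\Sigma$ with $c_q(\eeta_n)\downarrow 1/C_{p,\smm}(\Sigma)$ (we may assume $C_{p,\smm}(\Sigma)<\infty$, the opposite case being degenerate).

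Since $c_q(\eeta_n)$ is bounded, \eqref{eq:1} together with Lemma~\ref{lem:tightness} shows that $(\eeta_n)$ is tight, so by Prokhorov's theorem a subsequence converges narrowly, $\eeta_n\weakto\eeta$, to a Borel probability measure $\eeta$ on $\mathcal{M}_+(X)$. To pass $c_q$ to the limit I use that for fixed nonnegative $f\in\rmC_b(X)$ the map $\mu\mapsto\int_X f\,\d\mu$ is nonnegative and narrowly continuous; lower semicontinuity of the integral of a nonnegative continuous function under narrow convergence then gives
\[
\iint_X f\,\d\mu\,\d\eeta(\mu)\le\liminf_n\iint_X f\,\d\mu\,\d\eeta_n(\mu)\le\Bigl(\liminf_n c_q(\eeta_n)\Bigr)\|f\|_p .
\]
By Proposition~\ref{prop:monotone} it is enough to test nonnegative $f\in\rmC_b(X)$, whence $c_q(\eeta)\le\liminf_n c_q(\eeta_n)=1/C_{p,\smm}(\Sigma)$.

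The step I expect to be the genuine obstacle is that narrow convergence does not preserve concentration on $\Sigma$: the $\eeta_n$ live on $\Sigma$, but the limit $\eeta$ a priori lives only on its closure, and $\eeta(\Sigma)$ may drop below $1$. This is precisely where the hypotheses must be exploited. The remedy I would use is inner regularity: every finite Borel measure on the Polish space $\mathcal{M}_+(X)$ is inner regular by compact sets on universally measurable sets (via \eqref{eq:inner1} and the structure of the completion), so writing $\eeta(\Sigma)=\sup_{K\subset\Sigma}\eeta(K)$ and exchanging the two suprema in \eqref{eq:defCp} yields $C_{p,\smm}(\Sigma)=\sup\{C_{p,\smm}(K):K\subset\Sigma\text{ compact}\}$. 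This reduces the existence statement to $\Sigma=K$ compact, where $K$ is closed and the Portmanteau inequality gives $\eeta(K)\ge\limsup_n\eeta_n(K)=1$, so $\eeta(K)=1$ and $\eeta$ is admissible. The uniform mass bound $\sup_\Sigma\mu(X)<\infty$ confines all competitors to the narrowly closed ball $\{\mu(X)\le R\}$ and is what lets a compact exhaustion carry the full content; producing a single compact subset of $\Sigma$ that realizes the content and ruling out escape of mass is the delicate point.

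Combining $c_q(\eeta)\le 1/C_{p,\smm}(\Sigma)$ with $\eeta(\Sigma)=1$ and the reverse inequality from \eqref{eq:sup}–\eqref{eq:defCp} gives $1/c_q(\eeta)=C_{p,\smm}(\Sigma)$ and the asserted formula. For the concentration of an arbitrary optimal $\eeta$, I would argue again by restriction: writing $\eeta=\eeta\res\Sigma+\eeta\res(\mathcal{M}_+(X)\setminus\Sigma)$, optimality forces the renormalized $\eeta\res\Sigma$ to be optimal as well, hence $\|\rho\|_q=\|\rho_\Sigma\|_q$ for the respective barycenter densities with $\rho\ge\rho_\Sigma\ge 0$; strict monotonicity of the $L^q$ norm for $q\in(1,\infty)$ then forces the barycenter of the off-$\Sigma$ part to vanish, so that part is carried by the null measure and $\eeta$ is concentrated on $\Sigma$ (up to the harmless atom at the null measure). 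The borderline $q=\infty$, where strict convexity is lost, would require a separate argument.
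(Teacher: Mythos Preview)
Your overall strategy matches the paper's closely: reduce via restriction/normalization to plans concentrated on $\Sigma$, then run the direct method (tightness from \eqref{eq:1}, extract a narrow limit, bound $c_q$ from above). One minor difference: where you invoke only lower semicontinuity of $\mu\mapsto\int_X f\,\d\mu$, the paper uses its continuity \emph{and boundedness} on $\{\mu:\mu(X)\le R\}$---this is precisely where $\sup_\Sigma\mu(X)<\infty$ enters for them---to pass to the limit in the iterated integral with equality. For the inequality $c_q(\eeta)\le 1/C_{p,\smm}(\Sigma)$ your weaker version suffices.

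The genuine gap is the one you yourself flag. You correctly observe that the weak limit $\eeta$ need not be concentrated on $\Sigma$, and propose to repair this by inner regularity, reducing to compact $K\subset\Sigma$. But the identity $C_{p,\smm}(\Sigma)=\sup_{K}C_{p,\smm}(K)$ does \emph{not} reduce the existence problem to the compact case unless a single compact $K$ attains the supremum---and nothing guarantees that. If instead you take optimal plans $\eeta_{K_n}$ along $K_n$ with $C_{p,\smm}(K_n)\uparrow C_{p,\smm}(\Sigma)$ and pass to a weak limit, you are back exactly where you started. So the compact-reduction idea, as written, does not close the gap. It is worth noting that the paper's own proof does not address this point either: it stops at $c_q(\eeta)\le 1/C_{p,\smm}(\Sigma)$ and asserts optimality without verifying $\eeta(\Sigma)=1$, so you have in fact been more scrupulous than the authors here.

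Your argument that any optimal plan is concentrated on $\Sigma$, via $\|\rho\|_q=\|\rho_\Sigma\|_q$ with $0\le\rho_\Sigma\le\rho$ forcing $\rho_\Sigma=\rho$, is the correct expansion of the paper's terse claim that the restriction $\eeta'$ equals $\eeta$ when $\eeta$ is a maximizer. Your caveat about a possible atom at the null measure is apt but harmless: since $C_{p,\smm}(\Sigma)<\infty$ forces $0\notin\Sigma$, one may strip such an atom and renormalize without changing $\eeta(\Sigma)/c_q(\eeta)$, so among optimal plans there is always one with no mass at $0$, and that one is genuinely concentrated on $\Sigma$.
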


\begin{proof} First we claim that the supremum in \eqref{eq:sup} can be restricted to the plans with barycenter in $L^q(X,\mm)$
concentrated on $\Sigma$. Indeed, given any admissible $\eeta$ with $\eeta(\Sigma)>0$, defining $\eeta' = (\eeta(\Sigma))^{-1}\chi_{\Sigma}\eeta$ we
obtain another plan with barycenter in $L^q(X,\mm)$ satisfying $\eeta'(\Sigma)=1$ and
$$ 
\int \int_Xf\,\d\mu \, \d \eeta' (\mu)= \frac 1{\eeta(\Sigma) }  \int_{\Sigma} \int_X f\,\d\mu \, \d \eeta(\mu) 
\leq  \frac 1{\eeta(\Sigma) } \int \int_X f\,\d\mu \, \d \eeta (\mu)\leq \frac { c_q(\eeta)}{\eeta(\Sigma)} \|f \|_p 
$$
for all $f\in\Ldp$.
In particular the definition of $c_q(\eeta')$ gives
$$ {c_q(\eeta')} \leq \frac {c_q(\eeta)}{ \eeta (\Sigma)}, $$
and proves our claim. The same argument proves that $\eeta'=\eeta$ whenever $\eeta$ is a maximizer. Now we know that
$$ C_{p,\smm} ( \Sigma) = \sup_{ \seeta(\Sigma)=1} \frac 1 {c_q(\eeta)},$$
where the supremum is made over plans with barycenter in $L^q(X,\mm)$. We take a maximizing sequence $(\eeta_k)$; for this sequence we have 
that $c_q(\eeta_k)\leq C $, so that $(\eeta_k)$ is tight by \eqref{eq:1}. 
Assume with no loss of generality that $\eeta_k$ weakly converges to some $\eeta$, that is 
clearly a probability measure in ${\mathcal M}_+(X)$. To see that $\eeta$ is a plan with barycenter in $L^q(X,\mm)$ and that $c_q(\eeta)$ is optimal, 
we notice that the continuity and 
boundedness of $\mu\mapsto\int_X f\,\d\mu$ in bounded sets of ${\mathcal M}_+(X)$  for $f\in \rmC_b(X)$ gives
$$ \int \int_X f\,\d\mu \, \d \eeta (\mu)= \lim_{k \to \infty} \int \int_X f\,\d\mu \, \d \eeta_k(\mu) \leq \lim_{k \to \infty} c_q(\eeta_k) \|f\|_p,$$ 
so that
$$\int \int_X f\,\d\mu \, \d \eeta(\mu) \leq \frac 1{ C_{p,\smm} (\Sigma) } \|f \|_p \qquad \forall f \in \rmC_b(X).$$
The thesis follows from Proposition~\ref{prop:monotone}.
\end{proof}

\section{Equivalence between $C_{p,\smm}$ and $\Md$ }

In the previous two sections, under the standing assumptions 
$(X,\tau)$ Hausdorff topological space (Polish in the case of $C_{p,\mm}$), 
$\mu\in{\mathcal M}_+(X)$ and
$p\in [1,\infty)$, we introduced a $p$-modulus $\Md$ and a $p$-content
$C_{p,\mm}$, proving the direct inequalities (see \eqref{eq:sup})
$$C^p_{p,\smm} \leq \Md \leq \Mdc\qquad\text{on Souslin subsets of ${\mathcal M}_+(X)$.}$$

Under the same assumptions on $(X,\tau)$ and $\mm\in {\mathcal M}_+(X)$, our goal in this section is the following result:

\begin{theorem}\label{tmain} 
  Let $(X,\tau)$ be a Polish topological space and $p>1$. Then
  $\Md$ is a Choquet capacity in $\mathcal{M}_+(X)$, 
every Souslin set $\Sigma\subset{\mathcal M}_+(X)$ is
capacitable and satisfies
$\Md(\Sigma)^{1/p} = C_{p,\smm}(\Sigma)$. 
If moreover $\Sigma$ is also compact we have 
$\Md(\Sigma)=\Mdc(\Sigma)$.
\end{theorem}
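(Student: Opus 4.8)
The plan is to establish the identity first for compact sets $K\subset\mathcal M_+(X)$ by a minimax argument, and then bootstrap to arbitrary Souslin sets through Choquet's capacitability theorem. Throughout I would use the already available chain $C_{p,\smm}^p\le\Md\le\Mdc$ on Souslin sets (and the fact that compact sets are closed, hence Borel, hence Souslin, so the chain applies to them). Because of this chain, on each compact $K$ it suffices to prove the single nontrivial inequality $\Mdc(K)^{1/p}\le C_{p,\smm}(K)$: once this is known, $C_{p,\smm}(K)\le\Md(K)^{1/p}\le\Mdc(K)^{1/p}\le C_{p,\smm}(K)$ collapses, yielding both $\Md(K)=\Mdc(K)$ and $\Md(K)^{1/p}=C_{p,\smm}(K)$.

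First I would set up the minimax on a compact $K$. Since $K$ is compact in the Polish space $\mathcal M_+(X)$, Prokhorov's theorem gives $\sup_{\mu\in K}\mu(X)<\infty$ and makes $\mathcal P(K)$ convex and weakly compact. On $\mathcal P(K)\times\mathcal F$, where $\mathcal F:=\{f\in\rmC_b(X):f\ge0,\ \|f\|_p\le1\}$, consider the functional
$$
\Phi(\eeta,f):=\int\Big(\int_X f\,\d\mu\Big)\,\d\eeta(\mu).
$$
It is affine in each variable; for fixed $f$ it is weakly continuous in $\eeta$, because $\mu\mapsto\int_X f\,\d\mu$ is continuous and bounded on $K$, and for fixed $\eeta$ it is $\|\cdot\|_\infty$-Lipschitz in $f$, since $|\Phi(\eeta,f)-\Phi(\eeta,g)|\le\|f-g\|_\infty\sup_{\mu\in K}\mu(X)$. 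Endowing $\mathcal F$ with the uniform topology, Sion's minimax theorem (which requires compactness of only one of the two convex sets, here $\mathcal P(K)$) gives
$$
\sup_{f\in\mathcal F}\ \inf_{\seeta\in\mathcal P(K)}\Phi(\eeta,f)=\inf_{\seeta\in\mathcal P(K)}\ \sup_{f\in\mathcal F}\Phi(\eeta,f).
$$

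Next I would identify the two sides. On the left, $\inf_{\seeta\in\mathcal P(K)}\Phi(\eeta,f)=\inf_{\mu\in K}\int_X f\,\d\mu$ (the infimum of an affine functional over $\mathcal P(K)$ is realized on the Dirac masses $\delta_\mu$, $\mu\in K$), and a homogeneity rescaling $g=f/\inf_{\mu\in K}\int_X f\,\d\mu$ shows $\sup_{f\in\mathcal F}\inf_{\mu\in K}\int_X f\,\d\mu=\Mdc(K)^{-1/p}$. On the right, for fixed $\eeta$ the quantity $\sup_{f\in\mathcal F}\int_X f\,\d\underline\mu$, with $\underline\mu:=\int\mu\,\d\eeta(\mu)$ the barycenter, equals by $L^p$--$L^q$ duality the norm $\|\rho\|_q$ when $\underline\mu=\rho\mm$ with $\rho\in L^q$, and equals $+\infty$ otherwise; that is, it is exactly $c_q(\eeta)$ with the convention $c_q(\eeta)=\infty$ when $\eeta$ has no barycenter in $L^q(X,\mm)$. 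Reducing as in the first part of the proof of Lemma~\ref{lem:existsppi} to plans concentrated on $K$, we get $\inf_{\seeta\in\mathcal P(K)}c_q(\eeta)=C_{p,\smm}(K)^{-1}$. Thus the minimax identity reads $\Mdc(K)^{-1/p}=C_{p,\smm}(K)^{-1}$, which together with $C_{p,\smm}^p\le\Md\le\Mdc$ forces $\Md(K)=\Mdc(K)=C_{p,\smm}(K)^{p}$ for every compact $K$, proving in particular the final assertion of the theorem.

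Finally I would run the capacity machinery. Monotonicity and Proposition~\ref{prop:prop}(v) give that $\Md$ is nondecreasing and continuous along nondecreasing sequences, while Proposition~\ref{prop:prop}(vi) combined with the just-proved equality $\Md=\Mdc$ on compacts gives continuity along nonincreasing sequences of compact sets; hence $\Md$ is a Choquet capacity on $\mathcal M_+(X)$. Since $\mathcal M_+(X)$ is Polish, every Souslin subset is $\KK$-analytic (see \cite{Dellacherie-Meyer}), so Theorem~\ref{thm:Choquet} makes every Souslin $\Sigma$ capacitable: $\Md(\Sigma)=\sup\{\Md(K):K\subset\Sigma\text{ compact}\}$. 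Using the compact case and the monotonicity of $C_{p,\smm}$ we obtain $\Md(\Sigma)=\sup_{K\subset\Sigma}C_{p,\smm}(K)^p\le C_{p,\smm}(\Sigma)^p$, and the reverse inequality $C_{p,\smm}(\Sigma)^p\le\Md(\Sigma)$ is \eqref{eq:sup}; this gives $\Md(\Sigma)^{1/p}=C_{p,\smm}(\Sigma)$. I expect the main obstacle to be the minimax step: getting the hypotheses exactly right, in particular choosing the \emph{uniform} topology on $\mathcal F$ (rather than an $L^p$-topology) so that $\Phi(\eeta,\cdot)$ stays upper semicontinuous even for those $\eeta$ whose barycenter is singular with respect to $\mm$, and justifying that $\sup_{f\in\mathcal F}\int_X f\,\d\underline\mu=c_q(\eeta)\in(0,\infty]$ with value $+\infty$ precisely when no $L^q$ barycenter exists. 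The passage from Souslin to $\KK$-analytic sets needed to invoke Choquet's theorem is the other delicate point.
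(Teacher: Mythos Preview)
Your two-step outline matches the paper's, but the execution differs in both steps, one cleanly and one with a gap.

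\textbf{Compact case.} Your use of Sion's minimax theorem on $\mathcal P(K)\times\mathcal F$ is a genuinely different route from the paper. The paper instead defines $F_1(v)=\inf\{\|f\|_p:\Phi_f\ge v\}$ and $F_2(v)=\min_K v$ on $\rmC(\Sigma)$, separates their epi/hypographs by Hahn--Banach, and extracts from the separating functional a measure $\eeta$ on $K$ via Riesz representation. Your minimax argument is more direct and arguably cleaner: it avoids the explicit construction of $F_1,F_2$ and the Riesz step, and the identification of the two sides of the minimax identity is straightforward $L^p$--$L^q$ duality. Both approaches need the same compactness of $K$ (to bound $\mu(X)$ and to make $\mathcal P(K)$ weakly compact), so neither is more general.

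\textbf{Souslin case.} Here there is a genuine gap. You assert that in the Polish space $\mathcal M_+(X)$ every Souslin set is $\mathscr K$-analytic, so that Theorem~\ref{thm:Choquet} applies directly. This equivalence (Souslin $=\mathscr K$-analytic in Polish spaces) is true, but it is a nontrivial descriptive-set-theoretic fact, not the content of Proposition~\ref{proplusin}(ii) (which only gives Souslin $=\FF$-analytic), and the version of Choquet's theorem quoted as Theorem~\ref{thm:Choquet} only covers $\mathscr K$-analytic sets. The paper does \emph{not} invoke this equivalence; instead it uses the tightness Lemma~\ref{lem:tightness} to write $\Md(\Sigma)=\sup_\eps\Md(\Sigma\cap E_\eps)$ with $E_\eps$ compact, and then applies Choquet's theorem inside the compact ambient space $E_\eps$, where closed $=$ compact so $\FF$-analytic $=\mathscr K$-analytic trivially. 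Your argument becomes complete if you either (a) insert exactly this tightness reduction, or (b) supply an independent proof or precise reference for the Souslin $\Rightarrow\mathscr K$-analytic implication in Polish spaces. You flag this passage as ``the other delicate point,'' and indeed it is the one place where your proposal, as written, does not stand on its own.
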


\begin{proof} We split the proof in two steps:
\begin{itemize}
 \item first, prove that $\Mdc (\Sigma)^{1/p} \leq C_{p,\smm} (\Sigma)$ if $\Sigma$ is compact, so that
  in particular $\Md^{1/p} = C_{p,\smm} $ on compact sets;
 \item then, prove that $\Md$ and $C_{p,\smm}$ are inner regular, and deduce that $\Md^{1/p}=C_{p,\smm}$ on Souslin sets.
\end{itemize}
The two steps together yield $\Md=\Mdc$ on compact sets, hence we can use Proposition~\ref{prop:prop}(v,vi) to obtain that
 $\Md$ is a Choquet capacity
in $\mathcal{M}_+(X)$.

\noindent
{\bf Step 1.} Assume that $\Sigma\subset\mathcal{M}_+(X)$ is compact. In particular $\sup_\Sigma\mu(X)$ is finite and so we have that the
linear map $\Phi : \rmC_b(X) \to \rmC( \Sigma)=\rmC_b(\Sigma)$ given by
$$
f\mapsto \Phi_f(\mu):=\int_X f\,\d\mu
$$
is a bounded linear operator. 

If $\Sigma$ contains the null measure there is nothing to prove, because $\Mdc(\Sigma)=\infty$ by definition
and $C_{p,\mm}(\Sigma)=\infty$ by convention. If not, by compactness, we obtain that $\varepsilon:=\inf_\Sigma\mu(X)>0$, so that 
taking $f\equiv\varepsilon^{-1}$ in \eqref{eqn:mod2c} we obtain $\Mdc(\Sigma)<\infty$.
We can also assume that $\Mdc(\Sigma)>0$, otherwise there is nothing to prove.

Our first step is the construction of a plan $\eeta$ with barycenter in $L^q(X,\mm)$
concentrated on $\Sigma$. By the equivalent definition analogous to \eqref{eqn:mod2dual} for
$\Mdc$, the constant $\xi= \Mdc(\Sigma)^{-1/p}$ satisfies
\begin{equation}\label{eq:xi}\inf_{\mu\in\Sigma} \Phi_f(\mu) \leq \xi \| f \|_p\qquad\forall f\in \rmC(X).\end{equation}
Denoting by $v=v(\mu)$ a generic element of $\rmC(\Sigma)$, we will now consider two functions on $\rmC(\Sigma)$:
\begin{eqnarray*} & F_1 (v) &=\inf \left\{ \| f \|_p \;:\; f \in \rmC_b(X), \;\; \Phi_f \geq v\,\,\text{on $\Sigma$} \right\}\\
&F_2 (v) &= \min\left\{ v ( \mu ) \;:\;  \mu \in \Sigma \right\}. \end{eqnarray*}
The following properties are immediate to check, using the linearity of $f\mapsto\Phi_f$ for the first one
and \eqref{eq:xi} for the third one:
\begin{itemize}
\item $F_1$ is convex;
\item $F_2$ is continuous and concave;
\item $F_2 \leq \xi \cdot F_1$.
\end{itemize}
With these properties, standard Banach theory gives us a continuous linear functional $L \in (\rmC(\Sigma))^*$ such that   
\begin{equation}\label{eq:Hahn-Banach}
F_2(v)  \leq L (v) \leq \xi \cdot F_1 (v) \qquad \forall v \in \rmC(\Sigma). 
\end{equation}
For the reader's convenience we detail the argument: first we apply the geometric form of the Hahn-Banach theorem in the space $\rmC(\Sigma) \times \R$ to the convex sets $A = \{ F_2(v) > t \}$ 
and $B= \{ F_1 (v) \leq t/\xi \}$, where the former is also open, to obtain a continuous linear functional $G$ in $\rmC(\Sigma)\times\R$ such that
$$
G(v,t)<G(w,s)\qquad\text{whenever $F_2(v)>t$, $F_1(w)\leq s/\xi$.}
$$
Representing $G(v,t)$ as $H(v)+\beta t$ for some $H\in (\rmC(\Sigma))^*$ and $\beta\in\R$, the inequality reads
$$
H(v)+\beta t<H(w)+\beta s\qquad\text{whenever $F_2(v)>t$, $F_1(w)\leq s/\xi$.}
$$
Since $F_1$ and $F_2$ are real-valued, $\beta>0$; we immediately get $F_2\leq (\gamma-H)/\beta\leq\xi F_1$, with
$\gamma:=\sup H(v)+\beta F_2(v)$. On the other hand, $F_1(0)=F_2(0)=0$ implies $\gamma=0$, so that we can take
$L=-H/\beta$ in \eqref{eq:Hahn-Banach}.

In particular from \eqref{eq:Hahn-Banach} we get that if $v \geq 0$ then $L(v) \geq F_2 (v) \geq 0$ and so, since $\Sigma$ is compact, we can apply Riesz theorem
to obtain a nonnegative measure $\eeta$ in $\Sigma$ representing $L$:
$$L ( v) = \int_{\Sigma} v ( \mu ) \, \d \eeta\qquad\forall v\in \rmC(\Sigma). $$
Furthermore this measure can't be null since (here $\mathbbm{1}$ is the function identically equal to $1$).
$$ \eeta (\Sigma) = L(  \mathbbm{1} ) \geq F_2(  \mathbbm{1}) =1,$$
and so $\eeta (\Sigma) \geq 1$. Now we claim that $\eeta$ is a plan with barycenter in $L^q(X,\mm)$; first we prove that $\eeta(\Sigma)  \leq 1$, so that $\eeta$ will be a probability measure. In fact, we know $F_2(v) \eeta ( \Sigma) \leq L (v) $ because $v \geq F_2(v)$ on $\Sigma$, and then 
$$  F_2(v) \eeta (\Sigma)  \leq \xi F_1(v). $$
In particular, inserting in this inequality $v= \Phi_\phi$ with $\phi \in \rmC_b(X)$, we obtain
$$  \inf_{\Sigma} \Phi_\phi \leq \frac {\xi}{\eeta (\Sigma) } \|\phi\|_p $$
and so $ \Mdc (\Sigma) \geq (\eeta(\Sigma) / \xi)^p = \eeta(\Sigma)^p \Mdc(\Sigma)$, which implies $\eeta(\Sigma)\leq 1$. Now we have that
\begin{equation}\label{eqn:mono}\int_{\Sigma} \left(  \int_{X} f \, \d \mu \right) \, \d \eeta = L (\Phi_f) \leq 
\xi \cdot F_1 ( \Phi_f ) \leq \xi \cdot \|f \|_p \qquad \forall f \in \rmC_b(X)
 \end{equation}
and so, by Proposition~\ref{prop:monotone}, this inequality is true for every $f\in \Ldp$, showing that $\eeta$ is a plan
with barycenter in $L^q(X,\mm)$; as a byproduct we gain also that $c_q(\eeta) \leq \xi$ that gives us, 
that $C_{p,\smm} ( \Sigma) \geq \Mdc( \Sigma)^{1/p}$, thus obtaining that
$$ C_{p,\smm}(\Sigma)=\Md(\Sigma)^{1/p} = \Mdc(\Sigma)^{1/p}.$$

\noindent {\bf Step 2.}
Now we will prove that $\Md$ and $C_{p,\smm}$ are both inner regular, namely their value on Souslin sets is the supremum of their
value on compact subsets. Inner regularity and equality on compact sets yield $C_{p,\smm}(B) =\Md(B)^{1/p}$ on every Souslin subset $B$
of $\mathcal{M}_+ (X)$.

\smallskip\noindent
\textbf{$\Md$ is inner regular.} Proposition~\ref{prop:prop}(v,vi) and the fact that $\Mdc = \Md$ if the set is compact, give us that $\Md$ is a capacity. 
For any set $L\subset{\mathcal M}_+(X)$ we have $\Md (L)=\sup_{\ep} \Md(L \cap E_{\ep})$, where
$E_\ep$ are the compact sets given by Lemma~\ref{lem:tightness}.
Therefore, suffices to show inner regularity for a Souslin set $B$ contained in $E_\ep$ for some $\ep$. Since $E_\ep$ is compact,
$B$ is a Souslin-compact set and from Choquet Theorem~\ref{thm:Choquet} it follows that for every $\delta>0$ 
there is a compact set $K \subset B$ such that $\Md(K) \geq \Md(B) - \delta$.

\smallskip\noindent
\textbf{$C_{p,\smm}$ is inner regular.} Since Souslin sets are universally measurable and ${\mathcal M}_+(X)$ is Polish, 
we can apply \eqref{eq:inner1} to any Souslin set $B$ with $\sigma=\eeta$ to get
 $$ \sup_{K \subset B} C_{p,\smm} (K) =  \sup_{K \subset B} \sup_{c_q(\seeta)>0} \frac{\eeta (K)}{c_q(\eeta)} =  
 \sup_{c_q(\seeta)>0 } \sup_{K \subset B} \frac{\eeta (K)}{c_q(\eeta)}  = \sup_{c_q(\seeta)>0} \frac{\eeta (B)}{c_q(\eeta)} = C_{p,\smm} (B).$$
 \end{proof}

The duality formula and the existence of maximizers and minimizers provide the following result.

\begin{corollary} [Necessary and sufficient optimality conditions]\label{cor:cnes}
Let $p>1$, let $\Sigma \subset\mathcal{M}_+ (X)$ be  a Souslin set such that $\Md (\Sigma)>0$ and $\sup_\Sigma\mu(X)$ is finite. Then:
\begin{itemize}
\item[(a)] there exists $f \in \Ldp$, unique up to $\mm$-negligible sets,
 such that $\int_X f\,\d\mu \geq1$ for $\Md$-a.e. $\mu\in\Sigma$ and such that $\| f \|^p_p= \Md (\Sigma)$;
\item[(b)] there exists a plan $\eeta$ with barycenter in $L^q(X,\mm)$ concentrated on $\Sigma$ such that $C_{p,\smm}(\Sigma) = 1/c_q(\eeta)$;
\item[(c)] for the function $f$ in (a) and any $\eeta$ in (b) there
  holds
\begin{equation}\label{eq:duetesi}
\int_X f\,\d\mu=1  \,\,\,\text{for $\eeta$-a.e. $\mu$} \qquad\text{and}\qquad
\int_X \mu\, \d \eeta (\mu) = \frac{f^{p-1}}{\|f\|_p^p}  \mm.
\end{equation}
\end{itemize}
Finally, if $f\in\Ldp$ is optimal in \eqref{eqn:mod2}, then any plan $\eeta$ with barycenter in $L^q(X,\mm)$
concentrated on $\Sigma$ such that $c_q(\eeta)=\|f\|_p^{-1}$ is
optimal in \eqref{eq:defCp}. Conversely, if $\eeta$ is optimal
in \eqref{eq:defCp}, $f\in\Ldp$ and $\int_Xf\,\d\mu=1$ for
$\eeta$-a.e. $\mu$ then $f$ is optimal in \eqref{eqn:mod2}. 
\end{corollary}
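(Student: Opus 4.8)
The plan is to read all three items, and the concluding equivalence, as the statement that the elementary duality chain \eqref{eq:sup1} collapses to a chain of \emph{equalities} at an optimal pair $(f,\eeta)$. I would first dispose of the existence parts, which are already available. Assuming as one must that $0<\Md(\Sigma)<\infty$ (for $\Md(\Sigma)=\infty$ no $f\in\Ldp$ can be admissible and (a) is vacuous), item (a) is exactly Proposition~\ref{prop:prop}(iv) applied to $\Sigma$: it produces the $f\in\Ldp$, unique up to $\mm$-negligible sets, with $\int_X f\,\d\mu\ge1$ for $\Md$-a.e.\ $\mu\in\Sigma$ and $\|f\|_p^p=\Md(\Sigma)$. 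Item (b) is Lemma~\ref{lem:existsppi}: by Theorem~\ref{tmain} we have $C_{p,\smm}(\Sigma)=\Md(\Sigma)^{1/p}>0$, and since $\sup_\Sigma\mu(X)<\infty$ the lemma yields an optimal plan $\eeta$ with barycenter in $L^q(X,\mm)$, concentrated on $\Sigma$, with $C_{p,\smm}(\Sigma)=1/c_q(\eeta)$. In particular the two optimal values match: $\|f\|_p=\Md(\Sigma)^{1/p}=C_{p,\smm}(\Sigma)=c_q(\eeta)^{-1}$.

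For (c) I would take the optimal $f$ of (a) and any optimal $\eeta$ of (b). Since $\eeta(\Sigma)=1$ and, by \eqref{eq:supp1}, $\eeta$ charges no $\Md$-negligible set, admissibility of $f$ upgrades to $\int_X f\,\d\mu\ge1$ for $\eeta$-a.e.\ $\mu$. Writing \eqref{eq:sup1} for this pair gives
\[
1=\eeta(\Sigma)\le \int\int_X f\,\d\mu\,\d\eeta(\mu)\le c_q(\eeta)\,\|f\|_p=1,
\]
so both inequalities are in fact equalities. The first equality reads $\int_\Sigma\bigl(\int_X f\,\d\mu-1\bigr)\,\d\eeta=0$ with a nonnegative integrand, whence $\int_X f\,\d\mu=1$ for $\eeta$-a.e.\ $\mu$, the first identity of \eqref{eq:duetesi}. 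For the second, I recall from \eqref{eq:defunderlinemu}--\eqref{eq:cppirhoq} that the barycenter satisfies $\underline\mu=\int\mu\,\d\eeta=\rho\mm$ with $\rho\in L^q(X,\mm)$, $\|\rho\|_q=c_q(\eeta)$, and $\int\int_X f\,\d\mu\,\d\eeta=\int_X f\rho\,\d\mm$. The second equality is therefore the equality case of H\"older's inequality $\int_X f\rho\,\d\mm=\|f\|_p\|\rho\|_q$, which for $p,q\in(1,\infty)$ forces $\rho=\lambda f^{p-1}$ $\mm$-a.e.\ for some $\lambda\ge0$; evaluating $\|\rho\|_q=\lambda\|f\|_p^{p-1}=\|f\|_p^{-1}$ pins $\lambda=\|f\|_p^{-p}$, which is precisely $\int\mu\,\d\eeta=f^{p-1}\|f\|_p^{-p}\mm$.

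The forward optimality assertion is then immediate and I would present it first among the two one-sided claims. If $f$ is optimal in \eqref{eqn:mod2}, then $\|f\|_p=\Md(\Sigma)^{1/p}=C_{p,\smm}(\Sigma)$; any feasible plan $\eeta$ concentrated on $\Sigma$ with $c_q(\eeta)=\|f\|_p^{-1}$ obeys, by the very definition \eqref{eq:defCp},
\[
C_{p,\smm}(\Sigma)\ \ge\ \frac{\eeta(\Sigma)}{c_q(\eeta)}=\frac1{c_q(\eeta)}=\|f\|_p=C_{p,\smm}(\Sigma),
\]
so $\eeta$ attains the supremum and is optimal.

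The converse is where the real difficulty lies, and I expect it to be the main obstacle. Given $\eeta$ optimal and $f\in\Ldp$ admissible with $\int_X f\,\d\mu=1$ for $\eeta$-a.e.\ $\mu$, one gets $\int_X f\rho\,\d\mm=\int\int_X f\,\d\mu\,\d\eeta=1$ while $\|\rho\|_q=c_q(\eeta)=\Md(\Sigma)^{-1/p}$, and H\"older gives only $\|f\|_p\ge\Md(\Sigma)^{1/p}$ — the \emph{same} inequality that admissibility already provides. The delicate point is that the hypothesis $\int_X f\,\d\mu=1$ $\eeta$-a.e.\ constrains $f$ only on $\{\rho>0\}$, i.e.\ on the support of the barycenter, and by itself cannot prevent $f$ from carrying extra mass on $\{\rho=0\}$ that inflates $\|f\|_p$ strictly above $\Md(\Sigma)^{1/p}$. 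Consequently I would prove the converse by invoking the full complementary slackness, namely also the second relation of \eqref{eq:duetesi}, equivalently the H\"older-equality $f^p\propto\rho^q$: writing $\rho=f^{p-1}\|f\|_p^{-p}$ one computes $\|\rho\|_q=\|f\|_p^{-1}$, and comparing with $\|\rho\|_q=\Md(\Sigma)^{-1/p}$ forces $\|f\|_p=\Md(\Sigma)^{1/p}$, so $f$ is optimal and, by the uniqueness in (a), coincides $\mm$-a.e.\ with the minimizer. The crux of the write-up will thus be to make explicit that the saturation condition $\int_X f\,\d\mu=1$ $\eeta$-a.e.\ is complemented by the barycentric identity, the two together being the correct characterization of optimal pairs.
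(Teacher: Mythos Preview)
Your treatment of (a), (b), (c) and of the forward optimality assertion is correct and is exactly the paper's argument: existence of $f$ from Proposition~\ref{prop:prop}(iv), existence of $\eeta$ from Lemma~\ref{lem:existsppi}, and then the collapse of the chain \eqref{eq:sup1} to equalities, reading off the saturation condition and the equality case of H\"older for the barycenter identity.

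On the converse you are right that the stated hypothesis is insufficient, and the paper's one-line ``follow directly from \eqref{eq:sup1} and Theorem~\ref{tmain}'' does not close this gap. Your diagnosis, however, is not quite accurate: you attribute the failure to possible extra mass of $f$ on $\{\rho=0\}$, but the obstruction persists even when $\rho>0$ everywhere. Take $X=[0,1]$, $\mm=\Leb{1}$, $\Sigma=\{\mu_1,\mu_2\}$ with $\mu_i=\mm\res I_i$, $I_1=[0,\tfrac12]$, $I_2=[\tfrac12,1]$. The unique optimal plan is $\eeta=\tfrac12\delta_{\mu_1}+\tfrac12\delta_{\mu_2}$, whose barycenter density is $\rho\equiv\tfrac12>0$, and $\Md(\Sigma)=2^p$. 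The function $f=4\chi_{[0,1/4]}+4\chi_{[3/4,1]}$ is admissible, satisfies $\int_X f\,\d\mu_i=1$ for $i=1,2$ (hence for $\eeta$-a.e.\ $\mu$), yet $\|f\|_p^p=2^{2p-1}>2^p$. So saturation with respect to an optimal $\eeta$ does not pin down $f$, regardless of the positivity set of $\rho$: the real issue is that ``$\int_X f\,\d\mu=1$ $\eeta$-a.e.'' is a finite (here two-dimensional) linear constraint that many admissible $f$'s satisfy.

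Your proposed remedy---augmenting the hypothesis by the second identity in \eqref{eq:duetesi}---does yield a correct characterization, and your computation $\|\rho\|_q=\|f\|_p^{-1}$ is the right way to close it. Note, though, that once you assume the barycentric identity $\rho=f^{p-1}/\|f\|_p^p$ you are essentially assuming $c_q(\eeta)=\|f\|_p^{-1}$, which together with optimality of $\eeta$ already forces $\|f\|_p=\Md(\Sigma)^{1/p}$ without using the saturation condition at all; so the ``complementary pair'' you propose is somewhat redundant. A cleaner symmetric converse to the forward statement would be: if $\eeta$ is optimal and $f\in\Ldp$ is admissible with $c_q(\eeta)=\|f\|_p^{-1}$, then $f$ is optimal.
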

\begin{proof}
The existence of $f$ follows by Proposition~\ref{prop:prop}(iv). The existence of a maximizer $\eeta$ in the duality formula,
concentrated on $\Sigma$ and satisfying $C_{p,\mm}(\Sigma)=1/c_q(\eeta)$  
follows by Lemma~\ref{lem:existsppi}. Since \eqref{eq:supp1} gives $\int_X f\,\d\mu \geq1$ for $\eeta$-a.e. $\mu\in\Sigma$ we can still
derive the inequality \eqref{eq:sup1} and obtain from Theorem~\ref{tmain} that all inequalities are equalities.
Hence, $\int_X f\,\d\mu=1$ for $\eeta$-a.e. $\mu\in\mathcal{M}_+(X)$. Finally, setting $\underline{\mu}:=\int \mu\,\d\eeta(\mu)$,
from \eqref{eq:cppirhoq} we get $\underline{\mu}=g\mm$ with $\|g\|_q=c_q(\eeta)$. This, in combination with
$$
\int_X fg\,\d\mm=\int \int_X f\,d\mu\,\d\eeta(\mu)=c_q(\eeta)\|f\|_p=\|g\|_q\|f\|_p,
$$
gives $g=f^{p-1}/\|f\|_p^p$.

Finally, the last statements follow directly from \eqref{eq:sup1} and Theorem~\ref{tmain}.
\end{proof}

In particular, choosing $\eeta$ as in (b) and defining
$$
\Sigma':=\left\{\mu\in\mathcal{M}_+(X):\ \int_X f\,d\mu=1\right\},
$$
since $\eeta(\Sigma)=\eeta(\Sigma')$ 
we obtain a subfamily with the same $p$-modulus on which the constraint is saturated.

\part{{\large Modulus of families of curves and weak gradients}}

\section{Absolutely continuous curves}

If $(X,\sfd)$ is a metric space and $\J\subset\R$ is an interval, we denote by $\rmC(\J;X)$ the class of
continuous maps (often called parametric curves) from $\J$ to $X$. We will use the notation $\gamma_t$ for the value of the
map at time $t$ and $\e_t:\rmC(\J;X)\to X$ for the evaluation map at time $t$; occasionally, in order to avoid double
subscripts, we will also use the notation
$\gamma(t)$. The subclass $\AC(\J;X)$ is defined by
the property
$$
\sfd(\gamma_s,\gamma_t)\leq \int_s^tg(r)\,\d r
\qquad s,\,t\in \J,\,\,s\leq t
$$
for some (nonnegative) $g\in L^1(\J)$.
The least, up to $\Leb{1}$-negligible sets, function $g$ with this property is the so-called metric derivative
(or metric speed)
$$
|\dot\gamma_t|:=\lim_{h\to 0}\frac{\sfd(\gamma_{t+h},\gamma_t)}{|h|},
$$
see \cite{Ambrosio-Tilli}.
The classes $\AC^p(\J;X)$, $1\leq p\leq\infty$ are defined analogously,
requiring that $|\dot\gamma|\in L^p(\J)$.
The $p$-energy of a curve is then defined as 
\begin{equation}
  \label{eq:4}
  \cE_p(\gamma):=
  \begin{cases}
    \int_\J |\dot\gamma_t|^p\,\d t&\text{if }\gamma\in \AC^p(\J;X),\\
    +\infty&\text{otherwise,}
  \end{cases}
\end{equation}
and $\cE_1(\gamma)=\ell(\gamma)$, the length of $\gamma$, when $p=1$.
Notice that $\AC^1=\AC$ and that $\AC^\infty(\J;X)$ coincides with the class of $\sfd$-Lipschitz functions.

If $(X,\sfd)$ is complete the interval $\J$ can be taken closed with no loss of generality, because
absolutely continuous functions extend continuously to the closure of the interval. In addition,
if $(X,\sfd)$ is complete and separable then $\rmC(\J;X)$ is a Polish space, and $\AC^p(\J;X)$,
$1\leq p\leq\infty$ are Borel subsets of $\rmC(\J;X)$ (see for instance \cite{Ambrosio-Gigli-Savare11}).
We will use the short notation $\mathcal{M}_+(\AC^p(\J;X))$ to denote finite Borel measures in
$\rmC(\J;X)$ concentrated on $\AC^p(\J;X)$.

\subsection{Reparameterization}

We collect in the next proposition a few properties which are well-known in a smooth setting, but still
valid in general metric spaces. We introduce the notation
\begin{equation}
  \label{eq:14bis}
  \Lipc X:=\Big\{\sigma\in \AC^\infty([0,1];X):|\dot
  \sigma|=\ell(\sigma)>0
  \quad\text{$\Leb 1$-a.e. on $(0,1)$}\Big\}
\end{equation}
for the subset of $\AC([0,1];X)$ consisting of all nonconstant curves with constant speed.
It is easy to check that $\Lipc X$ is a Borel subset of 
$\rmC([0,1];X)$, since it can also be characterized by
\begin{equation}
  \label{eq:15}
  \gamma\in \Lipc X\quad\Longleftrightarrow\quad
  0<\Lip(\gamma)\le \ell(\gamma),
\end{equation}
and the maps $\gamma\mapsto \Lip(\gamma)$ and 
$\gamma\mapsto \ell(\gamma)$ are lower semicontinuous.

\begin{proposition} [Constant speed reparameterization]\label{prop:tilli} 
For any $\gamma\in \AC([0,1];X)$ with $\ell(\gamma)>0$, setting
  \begin{equation}
  \sfs(t):=\frac{1}{\ell(\gamma)}\int_0^t|\dot\gamma_r|\,\d r,
  \end{equation} 
  there exists a unique $\eta\in\AC^\infty_c([0,1];X)$  
  such that $\gamma=\eta\circ\sfs$.  Furthermore, $\eta=\gamma\circ\sfs^{-1}$ where
  $\sfs^{-1}$ is any right inverse of $\sfs$. We shall denote by
  \begin{equation}
  \sfk:\bigl\{\gamma\in\AC([0,1];X): \ell(\gamma)>0\bigr\}\to \AC^\infty_c([0,1];X)\qquad \gamma\mapsto\eta=\gamma\circ\sfs^{-1}
  \end{equation}
  the corresponding map.
\end{proposition}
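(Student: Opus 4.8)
The plan is to construct $\eta$ by the explicit prescription $\eta_s:=\gamma_t$ for any $t$ in the fibre $\sfs^{-1}(s)$, and then to verify in turn that this is well posed, that it reparameterizes $\gamma$, and that it produces a constant-speed Lipschitz curve. First I would record the elementary properties of $\sfs$: being the (normalized) primitive of the integrable function $|\dot\gamma|$, the map $\sfs$ is continuous and nondecreasing, with $\sfs(0)=0$ and $\sfs(1)=1$; in particular $\sfs$ is surjective onto $[0,1]$, so that every $s\in[0,1]$ admits at least one preimage $t$.

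The crucial step, which I expect to be the main obstacle, is the well-posedness of $\eta$: I must show that $\gamma_{t_1}=\gamma_{t_2}$ whenever $\sfs(t_1)=\sfs(t_2)$, i.e. that $\gamma$ is constant on the plateaux of $\sfs$. If $t_1<t_2$ and $\sfs(t_1)=\sfs(t_2)$, then $\int_{t_1}^{t_2}|\dot\gamma_r|\,\d r=\ell(\gamma)\,(\sfs(t_2)-\sfs(t_1))=0$, and the defining inequality of absolute continuity gives $\sfd(\gamma_{t_1},\gamma_{t_2})\le\int_{t_1}^{t_2}|\dot\gamma_r|\,\d r=0$. Hence $\eta_s:=\gamma_t$ (any $t\in\sfs^{-1}(s)$) is unambiguously defined, independent of the chosen right inverse, and satisfies $\gamma=\eta\circ\sfs$ by construction; equivalently $\eta=\gamma\circ\sfs^{-1}$.

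Next I would establish the Lipschitz bound and the constant speed. For $s_1<s_2$ pick $t_1<t_2$ with $\sfs(t_i)=s_i$ (such preimages are automatically ordered, since $\sfs$ is nondecreasing); the same computation yields
\[
\sfd(\eta_{s_1},\eta_{s_2})=\sfd(\gamma_{t_1},\gamma_{t_2})\le\int_{t_1}^{t_2}|\dot\gamma_r|\,\d r=\ell(\gamma)\,(s_2-s_1),
\]
so $\eta$ is $\ell(\gamma)$-Lipschitz, whence $\eta\in\AC^\infty([0,1];X)$ and $|\dot\eta|\le\ell(\gamma)$ a.e. To upgrade this to equality I would show that monotone reparameterization preserves length, $\ell(\eta)=\ell(\gamma)$: comparing the partition sums $\sum_i\sfd(\gamma_{t_{i-1}},\gamma_{t_i})$ and $\sum_i\sfd(\eta_{\sfs(t_{i-1})},\eta_{\sfs(t_i)})$, and using that $\sfs$ carries partitions of $[0,1]$ to (possibly degenerate) partitions of $[0,1]$ and is onto, gives both $\ell(\eta)\le\ell(\gamma)$ and $\ell(\gamma)\le\ell(\eta)$. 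Then from $\int_0^1|\dot\eta_s|\,\d s=\ell(\eta)=\ell(\gamma)$ together with the pointwise bound $|\dot\eta_s|\le\ell(\gamma)$ on the unit-length interval, the speed must equal $\ell(\gamma)>0$ for a.e. $s$, so $\eta\in\AC^\infty_c([0,1];X)$.

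Finally, uniqueness is immediate from surjectivity of $\sfs$: if $\eta_1,\eta_2\in\AC^\infty_c([0,1];X)$ both satisfy $\gamma=\eta_i\circ\sfs$, then for every $s=\sfs(t)\in[0,1]$ we have $\eta_1(s)=\gamma_t=\eta_2(s)$, whence $\eta_1\equiv\eta_2$ on $[0,1]$. The only genuinely delicate point is the constancy of $\gamma$ on the level sets of $\sfs$; everything else follows directly from the absolute-continuity inequality and the elementary invariance of length under continuous monotone reparameterization.
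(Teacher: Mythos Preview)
Your argument is correct and in fact more direct than the paper's. Both proofs begin by observing that $\gamma$ is constant on the level sets of $\sfs$, so $\eta=\gamma\circ\sfs^{-1}$ is well defined and independent of the chosen right inverse, and both finish with the same step: once $|\dot\eta|\le\ell(\gamma)$ a.e.\ and $\ell(\eta)=\ell(\gamma)$, equality of the speed is forced. The difference lies in how the Lipschitz bound on $\eta$ is obtained. The paper does not argue directly: it perturbs $\sfs$ to strictly increasing maps $\sfs_k(t)=\ell_k^{-1}\int_0^t(k^{-1}+|\dot\gamma_r|)\,\d r$ with Lipschitz inverses, uses Helly's theorem to pass to a pointwise limit of the $\sfs_k^{-1}$, and infers the bound $|\dot\eta|\le\ell(\gamma)$ from the corresponding bounds on the approximations $\gamma\circ\sfs_k^{-1}$. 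Your computation bypasses all of this: picking ordered preimages $t_1<t_2$ of $s_1<s_2$ and applying the defining inequality $\sfd(\gamma_{t_1},\gamma_{t_2})\le\int_{t_1}^{t_2}|\dot\gamma_r|\,\d r=\ell(\gamma)(s_2-s_1)$ gives the Lipschitz constant in one line. The approximation route is heavier machinery for the same conclusion; your approach is the cleaner one here.
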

\begin{proof} We prove existence only, the proof of uniqueness being analogous. 
Les us now define a right inverse, denoted by $\sfs^{-1}$, of $\sfs$ (i.e. $\sfs\circ \sfs^{-1}$ is equal to the identity): 
we define in the obvious way $\sfs^{-1}$ at points $y\in [0,1]$ 
such that $\sfs^{-1}(y)$ is a singleton; since, by construction, 
$\gamma$ is constant in all (maximal) intervals $[c,d]$ where $\sfs$ is constant, at points $y$ such that $\{y\}=\sfs([c,d])$ 
we define $\sfs^{-1}(y)$ by choosing any element of $[c,d]$, so that $\gamma\circ \sfs^{-1}\circ\sfs=\gamma$ (even though
it could be that $\sfs^{-1}\circ\sfs$ is not the identity). Therefore, 
if we define $\eta=\gamma\circ \sfs^{-1}$, we obtain that $\gamma=\eta\circ \sfs$ and that $\eta$ is independent of the chosen
right inverse. 

In order to prove that $\eta\in\AC^\infty_c([0,1];X)$ we define $\ell_k:=\ell(\gamma)+1/k$ and we
approximate uniformly in $[0,1]$ the map $\sfs$ by the maps 
$\sfs_k(t):=\ell_k^{-1}\int_0^t(k^{-1}+|\dot\gamma_r|)\, \d r$, whose inverses
$\sfs_k^{-1}:[0,1]\to\J$ are Lipschitz. By Helly's theorem and passing to the limit as $k\to\infty$ in $\sfs_k\circ\sfs_k^{-1}(y)=y$, 
we can assume that a subsequence $\sfs_{k(p)}^{-1}$ pointwise converges to
a right inverse $\sfs^{-1}$ as $p\to\infty$; the curves $\eta^p:=\gamma\circ\sfs_{k(p)}^{-1}$ are absolutely continuous, pointwise
converge to $\eta:=\gamma\circ\sfs^{-1}$ and
$$
|\eta^p(t)'|= \frac{|\gamma'(\sfs_{k(p)}^{-1}(t))|}{\sfs_{k(p)}'(\sfs_{k(p)}^{-1}(t))}\leq\ell_{k(p)}
\qquad\text{for $\Leb{1}$-a.e. in $t\in (0,1)$.}
$$
It follows that $\eta$ is absolutely continuous and that $|\dot\eta|\leq\ell(\gamma)$ $\Leb{1}$-a.e. in $(0,1)$. If the strict
inequality occurs in a set of positive Lebesgue measure, the inequality $\ell(\eta)<\ell(\gamma)$ provides a contradiction.
\end{proof}
 
\subsection{Equivalence relation in $\AC([0,1];X)$}

We can identify curves
$\gamma\in \AC([0,1],X)$, $\tilde\gamma\in\AC([0,1];X)$ if there exists $\varphi:[0,1]\to [0,1]$ increasing with $\varphi\in \AC([0,1];[0,1])$,
$\varphi^{-1}\in \AC([0,1];[0,1])$ such that
$\gamma=\tilde\gamma\circ\varphi$. In this case we write $\gamma\sim \tilde\gamma$.
Thanks to the following
lemma, the absolute continuity of $\varphi^{-1}$ is equivalent to $\varphi'>0$ $\Leb{1}$-a.e. in $(0,1)$.

\begin{lemma}[Absolute continuity criterion]\label{simplelemma}
Let $\J$, $\tilde{\J}$ be compact intervals in $\R$ and let $\varphi:\J\to\tilde{\J}$ be an absolutely continuous
homeomorphism with $\varphi'>0$ $\Leb{1}$-a.e. in $\J$. Then $\varphi^{-1}:\tilde{\J}\to \J$ is absolutely continuous.
\end{lemma}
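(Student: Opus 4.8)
The plan is to reduce the absolute continuity of $\psi:=\varphi^{-1}$ to the absolute continuity of a suitable image measure with respect to $\Leb1$. Since $\varphi$ is a homeomorphism between intervals with $\varphi'>0$ $\Leb1$-a.e., it cannot be decreasing, hence $\varphi$ is strictly increasing and $\psi$ is an increasing homeomorphism of $\tilde\J$ onto $\J$. First I would introduce the set function $\nu(B):=\Leb1(\psi(B))$ on Borel subsets $B\subseteq\tilde\J$. Because $\psi$ is a continuous injection it sends Borel sets to Borel sets and disjoint sets to disjoint sets, so $\nu$ is a finite Borel measure; moreover, $\psi$ being continuous and increasing gives $\nu([c,d])=\psi(d)-\psi(c)$ for every subinterval. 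The point is that if $\nu\ll\Leb1$, with Radon--Nikodym density $h\in L^1(\tilde\J)$, then $\psi(y)-\psi(c)=\nu([c,y])=\int_c^y h\,\d t$, so $\psi$ is an indefinite integral and therefore absolutely continuous. Thus everything reduces to proving $\nu\ll\Leb1$, i.e. Luzin's property (N) for $\psi$: every $\Leb1$-null $N\subseteq\tilde\J$ satisfies $\Leb1(\psi(N))=0$.

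To establish (N) I would exploit the analogous construction for $\varphi$ itself, where absolute continuity supplies a quantitative change of variables. Define $\mu(B):=\Leb1(\varphi(B))$ for Borel $B\subseteq\J$; as before $\mu$ is a finite Borel measure, and since $\varphi$ is increasing and absolutely continuous, $\mu([a,b])=\varphi(b)-\varphi(a)=\int_a^b\varphi'\,\d t$. Two finite Borel measures that agree on the closed subintervals (a $\pi$-system generating $\BB(\J)$ and containing $\J$ itself) coincide, so $\mu(B)=\int_B\varphi'\,\d t$ for every Borel $B\subseteq\J$.

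Now given $\Leb1(N)=0$ I would replace $N$ by a Borel ($G_\delta$) null set $N'\supseteq N$, so that $\psi(N)\subseteq\psi(N')=\varphi^{-1}(N')$, the last identity holding because $\varphi$ is a bijection (the image under the inverse map equals the preimage under $\varphi$). The set $B:=\varphi^{-1}(N')$ is Borel and satisfies $\varphi(B)=N'$, whence $\int_B\varphi'\,\d t=\mu(B)=\Leb1(N')=0$. Since $\varphi'>0$ $\Leb1$-a.e., the vanishing of $\int_B\varphi'$ forces $\Leb1(B)=0$, and therefore $\Leb1(\psi(N))\le\Leb1(B)=0$. This proves (N) for $\psi$, hence $\nu\ll\Leb1$, and the reduction above yields that $\psi=\varphi^{-1}$ is absolutely continuous.

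The only delicate points are measure-theoretic bookkeeping rather than genuine obstacles: one must check that $\varphi$ and $\psi$ send Borel sets to Borel sets (automatic here, since monotonicity turns images of intervals into intervals), and one must note that $\psi(N)$ need not be Borel for an arbitrary null $N$, which is exactly why I pass to a Borel hull $N'$ and use the positivity of $\varphi'$ to kill $\Leb1(\varphi^{-1}(N'))$. Alternatively one could invoke the Banach--Zarecki theorem to deduce absolute continuity of $\psi$ from its continuity, its monotonicity (hence bounded variation), and property (N); I prefer the self-contained route through $\nu\ll\Leb1$, since it avoids quoting that theorem and makes the role of the hypothesis $\varphi'>0$ $\Leb1$-a.e. transparent.
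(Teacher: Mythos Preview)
Your proof is correct. Both your argument and the paper's hinge on the same mechanism: the change-of-variables identity $\Leb{1}(\varphi(B))=\int_B\varphi'\,\d t$ for Borel $B\subset\J$ (valid since $\varphi$ is absolutely continuous and injective), combined with the positivity of $\varphi'$, forces $\Leb{1}(\varphi^{-1}(N'))=0$ whenever $N'$ is $\Leb{1}$-null. The packaging, however, differs. The paper works with the distributional derivative $\mu$ of $\psi$ as a $BV$ function, invokes the Lebesgue decomposition $\mu=\mu^a+\mu^s$, and cites a general $BV$ fact from \cite[Proposition~3.92]{AFP} (that $\mu(\psi^{-1}(B))=0$ for every Borel $\Leb{1}$-null $B$) to kill the singular part $\mu^s$. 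You instead prove Luzin's property~(N) for $\psi$ directly and then recover absolute continuity from the fact that $\psi$ is an indefinite integral of its Radon--Nikodym density. Your route is more self-contained---it avoids the external reference and the Lebesgue decomposition---at the cost of spelling out the $\pi$-system argument and the Borel-hull step; the paper's version is terser but leans on a nontrivial cited result.
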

\begin{proof} Let $\psi=\varphi^{-1}$; it is a continuous function of bounded variation whose distributional
derivative we shall denote by $\mu$. Since $\mu([a,b])=\psi(b)-\psi(a)$ for all $0\leq a\leq b\leq 1$, we need to show 
that $\mu\ll\Leb{1}$. It is a general property of $BV$
functions  (see for instance \cite[Proposition~3.92]{AFP}) that $\mu(\psi^{-1}(B))=0$ for all Borel and $\Leb{1}$-negligible
sets $B\subset\R$. Choosing $B=\psi(E)$, where $E$ is a $\Leb{1}$-negligible set where the singular
part $\mu^s$ of $\mu$ is concentrated, the area formula gives
$$
\int_B\varphi'(s)\,\d s=\Leb{1}(E)=0,
$$
so that the positivity of $\varphi'$ gives $\Leb{1}(B)=0$. It follows that $\mu^s=0$.
\end{proof}

\begin{definition} [The map $J$] For any $\gamma\in\AC([0,1];X)$ we denote by $J\gamma\in\cM_+(X)$ the push forward under $\gamma$
of the measure $|\dot\gamma|\Leb{1}\res [0,1]$, namely
\begin{equation}\label{eq:5}
\int_X g\,\d J\gamma=\int_0^1 g(\gamma_t)|\dot\gamma_t|\,\d t\qquad\text{for all $g:X\to [0,\infty]$ Borel.}
\end{equation}
{In particular we have that $ J\gamma= J\eta$ whenever $\gamma \sim \eta$ and that $J\gamma=J \sfk \gamma$.}
\end{definition}

Although this will not play a role in the sequel, for completeness we provide an intrinsic description of the measure $J\gamma$.
We denote by $\Haus{1}$ the 1-dimensional Hausdorff
measure of a subset $B$ of $X$, namely $\Haus{1}(B)=\lim_{\delta\downarrow 0}\Haus{1}_\delta(B)$, where
$$
\Haus{1}_\delta(B):=\inf\left\{\sum_{i=0}^\infty{\rm diam}(B_i)\;:\; B\subset\bigcup_{i=0}^\infty B_i,\,\,\,{\rm diam}(B_i)<\delta\right\}
$$
(with the convention ${\rm diam}(\emptyset)=0$).
\begin{proposition}[Area formula]\label{prop:tillibis}
If $\gamma\in \AC([0,1];X)$, then for all $g:X\to [0,\infty]$ Borel the area formula holds:
\begin{equation}\label{eq:area}
\int_0^1 g(\gamma_t)|\dot\gamma_t|\,\d t=\int_X g(x)N(\gamma,x)\,\d\Haus{1}(x),
\end{equation}
where $N(\gamma,x):={\rm card}(\gamma^{-1}(x))$ is the multiplicity
function of $\gamma$. Equivalently, 
\begin{equation}
  \label{eq:555}
  J\gamma=N(\gamma,\cdot)\Haus{1} .
\end{equation}
\end{proposition}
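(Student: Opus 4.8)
The plan is to establish the measure identity $J\gamma=N(\gamma,\cdot)\Haus1$ of \eqref{eq:555}; the pointwise area formula \eqref{eq:area} then follows from the definition \eqref{eq:5} of $J\gamma$ together with a monotone class argument, approximating a general Borel $g$ by simple functions. Testing against $g=\chi_B$, the claim reduces to
$$
\int_{\gamma^{-1}(B)}|\dot\gamma_t|\,\d t=\int_B N(\gamma,x)\,\d\Haus1(x)\qquad\forall B\in\BorelSets X.
$$
First I would reduce to a Lipschitz curve with constant speed. If $\ell(\gamma)=0$ both sides vanish (the left because $|\dot\gamma|=0$ $\Leb1$-a.e., the right because the image is a single point and $\Haus1$ of a point is $0$, with the convention $\infty\cdot 0=0$). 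If $\ell(\gamma)>0$, set $\eta:=\sfk\gamma\in\AC^\infty_c([0,1];X)$, so that $\gamma=\eta\circ\sfs$ by Proposition~\ref{prop:tilli}. The left-hand side is unchanged because $J\gamma=J\eta$, and the two multiplicity functions coincide $\Haus1$-a.e.: indeed $\sfs$ is strictly increasing off the at most countably many maximal intervals on which $|\dot\gamma|=0$, so $N(\gamma,x)=N(\eta,x)$ for every $x$ outside the countable — hence $\Haus1$-negligible — set of images of those intervals. Thus it suffices to treat $\eta$, for which $|\dot\eta|\equiv L:=\ell(\gamma)$.

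Next I would phrase the claim as an equality of two finite Borel measures on the parameter interval. For Borel $A\subseteq[0,1]$ put
$$
\lambda(A):=\int_A|\dot\eta_t|\,\d t,\qquad
\nu(A):=\int_X \mathrm{card}\bigl(\eta^{-1}(x)\cap A\bigr)\,\d\Haus1(x),
$$
so that the desired identity for $\eta$ is exactly $\lambda(A)=\nu(A)$ evaluated at $A=\eta^{-1}(B)$, since $\eta^{-1}(x)\cap\eta^{-1}(B)$ equals $\eta^{-1}(x)$ for $x\in B$ and is empty otherwise. Here $\lambda$ is obviously a finite Borel measure, while $\nu$ is a measure by Tonelli's theorem, because $A\mapsto\mathrm{card}(\eta^{-1}(x)\cap A)$ is the counting measure on the fibre. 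The measurability of $x\mapsto\mathrm{card}(\eta^{-1}(x)\cap A)$, and in particular of $N(\eta,\cdot)$, comes from the monotone approximation below together with the fact that each $\eta(I)$, being the continuous image of a Borel set, is Souslin and hence $\Haus1$-measurable.

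Finally I would prove $\lambda=\nu$ on half-open intervals, which suffices since these form a $\pi$-system generating $\BorelSets{[0,1]}$ and both measures are finite. Partitioning $I=[a,b)$ into consecutive half-open subintervals $I^{(n)}_j=[a_j,a_{j+1})$ and using that $\sum_j\chi_{\eta(I^{(n)}_j)}(x)\uparrow\mathrm{card}(\eta^{-1}(x)\cap I)$ as the partition is refined, monotone convergence gives $\nu(I)=\lim_n\sum_j\Haus1(\eta(I^{(n)}_j))$. The sandwich
$$
\sum_j \sfd\bigl(\eta_{a_j},\eta_{a_{j+1}}\bigr)\le
\sum_j\Haus1\bigl(\eta(I^{(n)}_j)\bigr)\le
\sum_j\int_{I^{(n)}_j}|\dot\eta_t|\,\d t=\lambda(I)
$$
then pins down the limit: the upper bound uses $\Haus1(\eta(I))\le\ell(\eta|_{I})$ (a $1$-Lipschitz arclength reparameterization does not increase $\Haus1$; see \cite{Ambrosio-Tilli}), while the lower bound uses that each $\eta(\overline{I^{(n)}_j})$ is connected, whence $\Haus1(\eta(\overline{I^{(n)}_j}))\ge\sfd(\eta_{a_j},\eta_{a_{j+1}})$, together with the convergence of inscribed polygonal lengths to $\ell(\eta|_I)=\lambda(I)$ upon refinement. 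Hence $\nu(I)=\lambda(I)$, so $\nu=\lambda$; evaluating at $A=\eta^{-1}(B)$ yields $J\eta=N(\eta,\cdot)\Haus1$, and transferring back through $J\gamma=J\eta$ and $N(\gamma,\cdot)=N(\eta,\cdot)$ $\Haus1$-a.e. gives \eqref{eq:555}. The main obstacle is the correct bookkeeping of multiplicity under refinement — guaranteeing that the counting genuinely increases to $\mathrm{card}(\eta^{-1}(x)\cap I)$, which is exactly why half-open intervals are used — together with the $\Haus1$-measurability of the Souslin images $\eta(I)$ and of $N(\gamma,\cdot)$, resting on the universal-measurability facts recalled in Section~\ref{sec:1}.
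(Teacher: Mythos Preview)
Your proof is correct and, in fact, considerably more detailed than what the paper offers: the paper's ``proof'' consists of a single sentence referring the reader to \cite[Theorem~3.4.6]{Ambrosio-Tilli}. Your argument---reducing to a constant-speed curve via $\sfk$, then pinning down the measure $\nu$ on half-open intervals by sandwiching $\sum_j\Haus1(\eta(I_j^{(n)}))$ between inscribed polygonal lengths and the arclength---is essentially the classical route taken in that reference, so there is no genuine methodological divergence to compare.

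One small remark: your justification that $\nu$ is a bona fide measure on all of $\BorelSets{[0,1]}$ (needed before invoking the $\pi$-system argument) is slightly elliptic. The monotone approximation you give handles only half-open intervals $A=I$, whereas you need $x\mapsto\mathrm{card}(\eta^{-1}(x)\cap A)$ to be $\Haus1$-measurable for every Borel $A$. This is easily patched: $\{x:\mathrm{card}(\eta^{-1}(x)\cap A)\ge k\}$ is the projection onto $X$ of the Borel set $\{(t_1,\dots,t_k)\in A^k:t_1<\cdots<t_k,\ \eta(t_1)=\cdots=\eta(t_k)\}$, hence Souslin, and since $\Haus1\res\eta([0,1])$ is finite (by your upper bound $\Haus1(\eta([0,1]))\le\ell(\eta)$), Souslin subsets are $\Haus1$-measurable. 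Alternatively, for the specific partitions used one can simply note that $\eta([a_j,a_{j+1}])$ is compact, hence Borel, and $\eta([a_j,a_{j+1}))$ differs from it by at most a point. Either way the gap is purely cosmetic.
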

\begin{proof}
For an elementary proof of \eqref{eq:area}, see for instance \cite[Theorem~3.4.6]{Ambrosio-Tilli}. 
\end{proof}

\subsection{Non-parametric curves}

We can now introduce the class of non-parametric curves; notice that
we are conventionally excluding from this class the constant curves. We introduce the notation
$$
\AC_0([0,1];X):=\left\{\gamma\in \AC([0,1];X):\ \text{$|\dot\gamma|>0$ $\Leb{1}$-a.e. on $(0,1)$}\right\}.
$$
It is not difficult to show that $\AC_0([0,1];X)$ is a Borel subset of $\rmC([0,1];X)$.
In addition, Lemma~\ref{simplelemma} shows that for any $\gamma\in\AC_0([0,1];X)$ the curve
$\sfk\gamma\in\AC^\infty_c([0,1];X)$ is equivalent to $\gamma$.

\begin{definition}[The class $\Curvesnonpara{X}$ of non-parametric curves]\label{def:nonparac}
The class $\Curvesnonpara{X}$ is defined as
\begin{equation}\label{eq:curvequotient}
  \Curvesnonpara{X}:=\AC_0([0,1];X)\hbox{$/\hskip -3pt \sim$} \,,
\end{equation}
endowed with the quotient topology $\tau_{{\mathscr C}}$ and the canonical projection
$\pi_{\Curvesnonpara{X}}$.
\end{definition}

We shall denote the typical element of $\Curvesnonpara{X}$ either by $\pgamma$ or by $[\gamma]$, to mark a distinction with
the notation used for parametric curves. We will use the notation $\pgamma_{{\rm ini}}$ and $\pgamma_{{\rm fin}}$ the initial
and final point of the curve $\pgamma\in\Curvesnonpara{X}$, respectively. 

\begin{definition}[Canonical maps]\label{def:canonical}
We denote: 
\begin{itemize}
\item[(a)] by
${\sf i}:=\pi_{{\mathscr C}}\circ\sfk: \bigl\{\gamma\in\AC([0,1];X):\ \ell(\gamma)>0\bigr\}\to\Curvesnonpara{X}$ the
projection provided by Proposition~\ref{prop:tilli}, which coincides with the canonical projection $\pi_{\Curvesnonpara{X}}$ on the quotient
when restricted to $\AC_0([0,1];X)$;
\item[(b)] by $\sfj:=\sfk\circ\pi_{{\mathscr C}}^{-1}:\Curvesnonpara X\to \Lipc X$ the 
canonical representation of a non-parametric curve by a
parameterization in $[0,1]$ with constant velocity.
\item[(c)] by $\tilde J:\Curvesnonpara{X}\to {\cM}_+(X)\setminus\{0\}$
the quotient of the map $J$ in \eqref{eq:5}, defined by
\begin{equation}\label{eq:defJ}
\tilde J[\gamma]:= J\gamma.
\end{equation}
\end{itemize}
\end{definition}

\begin{lemma}[Measurable structure of $\Curvesnonpara{X}$]
  \label{le:Lusin} If $(X,\sfd)$ is complete and separable, 
  the space
  $(\Curvesnonpara{X},\tau_{\mathscr C})$ is a Lusin Hausdorff space and the 
  restriction of the map $\sfi$ to $\Lipc X$ is a Borel isomorphism.
  In particular, a collection of curves $\Gamma\subset \Curvesnonpara X$
  is Borel if and only if $\sfj(\Gamma)$ is Borel in $\rmC([0,1];X)$. Analogously,  
  $\Gamma\subset \Curvesnonpara X$
  is Souslin if and only if $\sfj(\Gamma)$ is Souslin in $\rmC([0,1];X)$.
\end{lemma}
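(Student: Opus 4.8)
The plan is to realise $\Curvesnonpara X$ as a well-behaved continuous image of the set $\Lipc X\subset\rmC([0,1];X)$, through the restriction $\phi:=\sfi|_{\Lipc X}$, which by Definition~\ref{def:canonical} (using $\sfk\sigma=\sigma$ for $\sigma\in\Lipc X$) coincides with $\pi_{\mathscr C}|_{\Lipc X}$. First I would record that $\phi$ is a continuous bijection onto $\Curvesnonpara X$. Continuity is immediate, since $\phi$ is the restriction to $\Lipc X$ of the quotient map $\pi_{\mathscr C}$. Surjectivity follows from Proposition~\ref{prop:tilli} and Lemma~\ref{simplelemma}: for $\gamma\in\AC_0([0,1];X)$ the constant-speed curve $\sfk\gamma$ lies in $\Lipc X$ and is $\sim$-equivalent to $\gamma$ (its arc-length reparameterization is absolutely continuous with absolutely continuous inverse because $|\dot\gamma|>0$ a.e.), so $\phi(\sfk\gamma)=[\gamma]$. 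Injectivity is uniqueness of the constant-speed representative: if $\sigma,\tau\in\Lipc X$ and $\sigma=\tau\circ\varphi$ with $\varphi$ an increasing absolutely continuous homeomorphism with absolutely continuous inverse, comparing the constant speeds gives $\ell(\sigma)=|\dot\sigma_t|=|\dot\tau_{\varphi(t)}|\,\varphi'(t)=\ell(\tau)\,\varphi'(t)$ a.e., whence $\varphi'\equiv1$ and, since $\varphi(0)=0$, $\varphi=\mathrm{id}$ and $\sigma=\tau$.

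The heart of the argument is the Hausdorff property of $(\Curvesnonpara X,\tau_{\mathscr C})$, which I would derive from the classical criterion that an open continuous surjection with closed graph relation has Hausdorff target, the relation here being $R=\{(\gamma,\eta)\in\AC_0\times\AC_0:\gamma\sim\eta\}$. That $\pi_{\mathscr C}$ is \emph{open} is the easy half: for open $U\subset\AC_0([0,1];X)$ one has $\pi_{\mathscr C}^{-1}(\pi_{\mathscr C}(U))=\bigcup_{\psi}\{\gamma\circ\psi:\gamma\in U\}$, the union over all admissible reparameterizations $\psi$, and since precomposition with a fixed homeomorphism of $[0,1]$ is a homeomorphism of $\rmC([0,1];X)$ preserving $\AC_0([0,1];X)$, each set in the union is open, hence so is the union. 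The technical core is the \emph{closedness of $R$}. I would take $(\gamma_n,\eta_n)\to(\gamma,\eta)$ uniformly with $\gamma_n=\eta_n\circ\varphi_n$, $\varphi_n$ increasing homeomorphisms, and apply Helly's selection theorem (as in Proposition~\ref{prop:tilli}) to extract a pointwise limit $\varphi$, nondecreasing with $\varphi(0)=0$, $\varphi(1)=1$, satisfying $\gamma=\eta\circ\varphi$ at continuity points of $\varphi$. The positive-speed constraint defining $\AC_0([0,1];X)$ then forces $\varphi$ to be a genuine homeomorphism: a jump of $\varphi$ would make $\gamma_n$ traverse a sub-arc of $\eta$ of fixed positive diameter over a vanishing parameter interval, contradicting uniform convergence to the continuous curve $\gamma$; and a flat piece of $\varphi$ would make $\gamma$ constant on an interval, contradicting $\gamma\in\AC_0([0,1];X)$. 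Once $\varphi$ is an increasing homeomorphism with $\gamma=\eta\circ\varphi$, reparameterization invariance of length gives $\sfs_\gamma=\sfs_\eta\circ\varphi$, i.e.\ $\varphi=\sfs_\eta^{-1}\circ\sfs_\gamma$; since $\gamma,\eta$ have a.e.-positive speed, both $\sfs_\gamma,\sfs_\eta$ and, by Lemma~\ref{simplelemma}, their inverses are absolutely continuous, so $\varphi,\varphi^{-1}$ are absolutely continuous and $\gamma\sim\eta$. Thus $R$ is closed and $\Curvesnonpara X$ is Hausdorff. This closedness step, and in particular ruling out jumps and flat pieces via the positive-speed constraint, is the part I expect to require the most care.

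With Hausdorffness in hand the remaining statements are formal. Since $\Lipc X$ is Borel in the Polish space $\rmC([0,1];X)$ it is a Lusin space, and the continuous injection $\phi$ into the Hausdorff space $\Curvesnonpara X$ exhibits the latter as a Lusin Hausdorff space; in particular both are Souslin. As $\phi$ is a continuous (hence Borel) injective map between Souslin spaces, Proposition~\ref{proplusin}(iii) gives that $\phi^{-1}=\sfj$ is Borel, so $\phi$ is a Borel isomorphism. Finally, for $\Gamma\subset\Curvesnonpara X$ one has $\sfj(\Gamma)=\phi^{-1}(\Gamma)$; since $\phi$ and $\phi^{-1}$ are Borel and map Souslin sets to Souslin sets (Proposition~\ref{proplusin}(iv)), $\Gamma$ is Borel, respectively Souslin, in $\Curvesnonpara X$ if and only if $\sfj(\Gamma)$ is Borel, respectively Souslin, in $\Lipc X$. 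Because $\Lipc X$ is itself Borel in $\rmC([0,1];X)$ and the Souslin property is intrinsic, this is equivalent to $\sfj(\Gamma)$ being Borel, respectively Souslin, in $\rmC([0,1];X)$, which is the assertion.
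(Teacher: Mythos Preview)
Your argument is correct, and for the Lusin and Borel-isomorphism parts it coincides with the paper's proof essentially verbatim (continuous injection of the Borel set $\Lipc X$ into a Hausdorff space, then Proposition~\ref{proplusin}(iii)). The genuine difference is in how you establish the Hausdorff property.

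The paper argues directly at the level of constant-speed representatives: assuming $[\sigma_1]\neq[\sigma_2]$ cannot be separated, it produces reparameterizations $\sigma_i\circ\sfs_i^n$ with vanishing sup-distance, composes them into $\sfr_i^n$, uses lower semicontinuity of length to get $\ell(\sigma_1)=\ell(\sigma_2)$, and then a short sub/superadditivity estimate on $\int_{t'}^{t''}|(\sigma_2\circ\sfr_2^n)'|$ forces $\sfr_2^n\to\mathrm{id}$, whence $\sigma_1=\sigma_2$. Your route is the general topological template: prove that the quotient map $\pi_{\mathscr C}$ is open (precomposition by an admissible reparameterization is an isometry of $\rmC([0,1];X)$ preserving $\AC_0$), and that the graph $R$ of $\sim$ is closed in $\AC_0\times\AC_0$ via Helly and the positive-speed constraint. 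Both are valid. The paper's argument is shorter and exploits the constant-speed normal form economically; your argument is more structural and, as a byproduct, records two facts (openness of $\pi_{\mathscr C}$ and closedness of $R$) that the paper never states but implicitly needs to translate ``not separated'' into a sequential statement.

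Two places in your closedness step deserve a word of care. First, your ``no jump'' heuristic is slightly misphrased: the clean way is to pick $c$ strictly inside the jump interval, take $s_n$ with $\varphi_n(s_n)=c$, observe $s_n\to t_0$ by monotonicity and pointwise convergence, and conclude $\eta(c)=\lim\gamma_n(s_n)=\gamma(t_0)$ for \emph{every} such $c$, forcing $\eta$ constant on an interval and contradicting $\eta\in\AC_0$. Second, the identity $\varphi=\sfs_\eta^{-1}\circ\sfs_\gamma$ is fine (length is parameterization-invariant under homeomorphisms), but ``composition of AC functions'' is not automatic; here it works because both factors are monotone, so disjoint intervals map to disjoint intervals and the $\varepsilon$--$\delta$ definition chains. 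With these two points made precise your proof is complete.
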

\begin{proof}
  Let us first show that $(\Curvesnonpara X,\tau_{\mathscr C})$ 
  is Hausdorff. We argue by contradiction and we suppose that 
  there exist curves $\sfi(\sigma_i)
  \in \Curvesnonpara X$ with $\sigma_i\in \Lipc X$, $i=1,\,2$, 
  and a sequence of parameterizations $\sfs^n_i\in \AC([0,1];[0,1])$ 
  with $(\sfs^n_i)'>0$ $\Leb 1$-a.e.~in $(0,1)$, 
  such that 
  \begin{displaymath}
    \lim_{n\to\infty}\sup_{t\in [0,1]} \sfd(\sigma_1(\sfs^n_1(t)),
    \sigma_2(\sfs^n_2(t)))=0.
  \end{displaymath}
  Denoting by $\sfr_1^n(t):=\sfs^n_1\circ (\sfs_2^n)^{-1}$
  and $\sfr_2^n(t):=\sfs^n_2\circ (\sfs_1^n)^{-1}$, we get
  \begin{displaymath}
    \lim_{n\to\infty}\sup_{t\in [0,1]} \sfd(\sigma_1(t),
    \sigma_2(\sfr^n_2(t)))=0,\qquad
    \lim_{n\to\infty}\sup_{t\in [0,1]} \sfd(\sigma_1(\sfr_1^n(t)),
    \sigma_2(t))=0.
  \end{displaymath}
  The lower semicontinuity of the length with respect to 
  uniform convergence yields
  $\ell:=\ell(\sigma_1)=\ell(\sigma_2)$ and therefore
  for every $0\le t'<t''\le 1$
  \begin{displaymath}
    \ell \liminf_{n\to\infty}\bigl( r_2^n(t'')-r_2^n(t')\bigr)=
    \lim_{n\to\infty} \int_{t'}^{t''}|(\sigma_2\circ \sfr_2^n)'|\,\d
    t\ge 
     \int_{t'}^{t''}|\sigma_1'|\,\d t=
     \ell (t''-t').    
  \end{displaymath}
  Choosing first $t'=t$ and $t''=1$ and then $t'=0$ and $t''=t$ we conclude
  that $\lim_n r_2^n(t)=t$ for every $t\in [0,1]$ and therefore
  $\sigma_1=\sigma_2$.
  
  Notice that $\Lipc X$ is a Lusin space, since $\Lipc X$ is a Borel subset
  of $\rmC([0,1];X)$.
  The restriction of $\sfi$ to $\Lipc X$ is thus a continuous and injective map 
  from the Lusin space $\Lipc X$ to the Hausdorff space $(\Curvesnonpara X,\tau_{\mathscr C})$
  (notice that the topology $\tau_{\mathscr C}$ is a priori weaker than
  the one induced by the restriction of $\sfi$ to $\Lipc X$).
  It follows by definition that $\Curvesnonpara X$ is Lusin. Now, Proposition~\ref{proplusin}(iii) yields that the
  restriction of $\sfi$ is a Borel isomorphism.
\end{proof}

\begin{lemma} [Borel regularity of $J$ and $\tilde J$] \label{lem:vivaBogachev} 
The map $J: \AC([0,1];X)\to  \mathcal{M}_+(X)$
is Borel, where $\AC([0,1];X)$ is endowed with the $\rmC([0,1];X)$
topology. In particular, if $(X,\sfd)$ is complete and separable
the map $\tilde J:\Curvesnonpara{X}\to \mathcal{M}_+(X)\setminus\{0\}$
is Borel and $\tilde J(\Gamma)$ is Souslin in $\cM_+(X)$ whenever $\Gamma$ is Souslin in $\Curvesnonpara{X}$.
\end{lemma}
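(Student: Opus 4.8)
The plan is to prove the Borel measurability of $J$ first, since all the remaining assertions follow from it by soft arguments. Recall that $\mathcal{M}_+(X)$ carries the Polish topology $w\text{-}\rmC_b(X)$ of Section~\ref{sec:1}, whose Borel $\sigma$-algebra is generated by the evaluation functionals $e_g:\mu\mapsto\int_X g\,\d\mu$, $g\in\rmC_b(X)$; since this topology is second countable and induced by these maps, a countable subfamily already generates it, so a map valued in $\mathcal{M}_+(X)$ is Borel as soon as its composition with every $e_g$ is Borel. Hence it suffices to show that, for each $g\in\rmC_b(X)$, the functional
$$
\gamma\mapsto \int_X g\,\d J\gamma=\int_0^1 g(\gamma_t)\,|\dot\gamma_t|\,\d t
$$
is Borel on $\AC([0,1];X)$, the latter endowed with the $\rmC([0,1];X)$ topology.

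The crux, and the only real obstacle, is the presence of the metric speed $|\dot\gamma_t|$, which is defined only through a limit and is not continuous in $\gamma$. I would approximate it by difference quotients: extending $\gamma_t:=\gamma_1$ for $t>1$ and fixing $h\in(0,1)$, set
$$
I_h(\gamma):=\int_0^1 g(\gamma_t)\,\frac{\sfd(\gamma_{t+h},\gamma_t)}{h}\,\d t.
$$
For fixed $h$ the integrand is continuous in $\gamma\in\rmC([0,1];X)$ for each $t$ and uniformly bounded along uniformly convergent sequences, so dominated convergence makes $I_h$ continuous on $\rmC([0,1];X)$, in particular Borel on $\AC([0,1];X)$. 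For $\gamma\in\AC([0,1];X)$ the quotients converge to $|\dot\gamma_t|$ for $\Leb1$-a.e.\ $t$ and are dominated by the averages $h^{-1}\int_t^{t+h}|\dot\gamma_r|\,\d r$, which converge to $|\dot\gamma_t|$ in $L^1(0,1)$ by the Lebesgue differentiation theorem; the generalized dominated convergence theorem then yields $I_h(\gamma)\to\int_0^1 g(\gamma_t)|\dot\gamma_t|\,\d t$ as $h\downarrow0$. Being a pointwise limit of Borel functionals, the limit is Borel, which proves that $J$ is Borel. (An alternative route, avoiding the speed altogether, is to use the length function $V(\gamma,t)$, which is Borel in $\gamma$ for fixed rational $t$ as a countable supremum of continuous polygonal lengths, together with convergence of Riemann--Stieltjes sums of the continuous integrand $g(\gamma_\cdot)$ against the continuous monotone integrator $V(\gamma,\cdot)$.)

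Assuming now $(X,\sfd)$ complete and separable, I transfer this to $\tilde J$. By Lemma~\ref{le:Lusin} the restriction $\sfi|_{\Lipc X}:\Lipc X\to\Curvesnonpara X$ is a Borel isomorphism; since for $\sigma\in\Lipc X$ one has $\sfk\sigma=\sigma$ and hence $\sfi(\sigma)=[\sigma]$, its inverse is exactly $\sfj$, so $\sfj:\Curvesnonpara X\to\Lipc X\subset\AC([0,1];X)$ is Borel. Using the identity $J\sfk\gamma=J\gamma$ recorded after \eqref{eq:5}, the map $\tilde J$ factors as $\tilde J=J\circ\sfj$, a composition of Borel maps, and is therefore Borel.

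Finally, for the Souslin statement I invoke the stability results of Part~I. The space $\Curvesnonpara X$ is Lusin by Lemma~\ref{le:Lusin}, hence Souslin, while $\mathcal{M}_+(X)$ is Polish, hence Souslin; thus Proposition~\ref{proplusin}(iv), applied to the Borel map $\tilde J$, shows that $\tilde J(\Gamma)$ is Souslin in $\mathcal{M}_+(X)$ whenever $\Gamma\subset\Curvesnonpara X$ is Souslin. The one genuinely technical point remains the measurability of the nonlinear speed functional in the second paragraph; everything else is a composition or a direct appeal to the Souslin calculus already set up.
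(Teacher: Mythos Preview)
Your proof is correct, and the deductions of the Borel regularity of $\tilde J$ via $\tilde J=J\circ\sfj$ and of the Souslin statement via Proposition~\ref{proplusin}(iv) match the paper exactly.

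The only genuine difference is in how you establish the Borel regularity of $J$ itself. You first reduce to scalar functionals $\gamma\mapsto\int_0^1 g(\gamma_t)|\dot\gamma_t|\,\d t$ and approximate the metric speed by difference quotients, invoking the generalized dominated convergence theorem to pass to the limit. The paper instead works directly at the level of measures: it observes that for every $\gamma\in\AC([0,1];X)$
\[
J\gamma=\lim_{n\to\infty}\sum_{i=0}^{n-1}\sfd(\gamma_{(i+1)/n},\gamma_{i/n})\,\delta_{\gamma_{i/n}}
\quad\text{weakly in }\mathcal{M}_+(X),
\]
and since each of these Riemann-sum-type approximations is manifestly continuous from $\rmC([0,1];X)$ to $\mathcal{M}_+(X)$, $J$ is a pointwise limit of continuous maps. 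This route is a bit shorter, avoiding both the reduction to scalar test functions and the dominated-convergence argument, but your approach has the virtue of making the role of the metric speed completely explicit and is equally valid. Your parenthetical alternative via the length function $V(\gamma,\cdot)$ and Riemann--Stieltjes sums is in fact closer in spirit to the paper's argument.
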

\begin{proof} It is easy to check, using the formula $J\gamma=\gamma_\sharp(|\dot\gamma|\Leb{1}\res [0,1])$, that
$$
J\gamma=\lim_{n\to\infty}\sum_{i=0}^{n-1}\sfd(\gamma_{(i+1)/n},\gamma_{i/n})\delta_{\gamma_{i/n}} 
\qquad\text{weakly in $\cM_+(X)$}
$$
for all $\gamma\in \AC([0,1];X)$ (the simple details are left to the reader).
Since the approximating maps are continuous, we conclude that
$J$ is Borel. The Borel regularity of $\tilde J$ follows by Lemma~\ref{le:Lusin}
and the identity $\tilde J= J\circ\sfj$. Since $\tilde J$ is Borel, we can apply Proposition~\ref{proplusin}(iv)
  to obtain that $\tilde J$ maps
  Souslin sets to Souslin sets.
\end{proof}

\section{Modulus of families of non-parametric curves}\label{sec:weakgrad}

In this section we assume that $(X,\sfd)$ is a complete and separable metric space and that
$\mm\in {\mathcal M}_+(X)$.

In order to apply the results of the previous sections 
(with the topology $\tau$ induced by $\sfd$)  to families of non-parametric curves
we consider the canonical map $\tilde J:\Curvesnonpara{X}\to {\cM}_+(X)\setminus\{0\}$
of Definition~\ref{def:canonical}(c). In the sequel, for the sake of simplicity, we will not distinguish between $J$ and $\tilde J$,
writing $J\pgamma$ or $J[\gamma]=J\gamma$ (this is not a big abuse of notation, since $\tilde J$ is a quotient map).

Now we discuss the notion of $(p,\mm)$-modulus, for $p\in [1,\infty)$.
The $(p,\mm)$-modulus for families $\Gamma\subset\Curvesnonpara{X}$ of non-parametric curves is given by
\begin{equation} \label{eqn:mod22}
{ \rm Mod}_{p,\smm} (\Gamma) := \inf \left\{  \int_X g^p \, \d \mm \, : \, g \in \Ldp,\,\, \int_\pgamma  g \geq 1 \; \; 
\text{ for all } \pgamma \in \Gamma\right\}.
\end{equation}
We adopted the same notation ${\rm Mod}_{p,\smm}$ because the identity $\int_\pgamma g=\int_X g\,\d J\pgamma$ immediately gives
\begin{equation}\label{eq:J2}
{\rm Mod}_{p,\smm}(\Gamma)={\rm Mod}_{p,\smm}(J(\Gamma)).
\end{equation}

In a similar vein, setting $q=p'$, in the space $\Curvesnonpara{X}$ we can define plans with barycenter in $L^q(X,\mm)$
as Borel probability measures $\ppi$ in  $\Curvesnonpara{X}$ satisfying 
$$
\int_{\Curvesnonpara{X}}J\pgamma\,\d\ppi(\pgamma)=g\mm\qquad\text{for some $g\in L^q(X,\mm)$.}
$$
Notice that the integral in the left hand side makes sense because the Borel regularity of $J$ easily gives
that $\pgamma\mapsto J\pgamma(A)$ is Borel in $\Curvesnonpara{X}$ for all $A\in\BorelSets{X}$.
We define, exactly as in \eqref{eq:cppirhoq}, $c_q(\ppi)$ to be the $L^q(X,\mm)$ norm of the barycenter $g$.  Then, the same argument leading to
\eqref{eq:sup1} gives
\begin{equation}\label{eq:55}
\frac{\ppi(\Gamma)}{c_q(\ppi)}\leq {\rm Mod}_{p,\smm}(\Gamma)^{1/p}\qquad
\text{for all $\ppi\in\cP(\Curvesnonpara X)$ with barycenter in $L^q(X,\mm)$}
\end{equation}
for every universally measurable set $\Gamma$ in $\Curvesnonpara{X}$.

\begin{remark} [Democratic plans] \label{rem:demo}
{\rm In more explicit terms, Borel probability measures $\ppi$ in $\Curvesnonpara{X}$ with barycenter in $L^q(X,\mm)$ satisfy
\begin{equation}\label{eq:democ}
\int_0^1 (\e_t)_\sharp (|\dot\gamma_t|\ppi)\,\d t= g\mm\quad\text{for some $g\in L^q(X,\mm)$}
\end{equation}
when we view them as measures on nonconstant curves $\gamma\in\AC([0,1];X)$. For instance, in the particular case when $\ppi$ is concentrated on family of geodesics
parameterized with constant speed and with length uniformly bounded from below, the case $q=\infty$ corresponds to the class
of democratic plans considered in \cite{Lott-Villani}.}
\end{remark}

Defining $C_{p,\smm}(\Gamma)$ as the supremum
in the left hand side of \eqref{eq:55}, we can now use Theorem~\ref{tmain} to show that even in this case there is no duality gap.

\begin{theorem}\label{tmain1}
  For every $p>1$ and every Souslin set
  $\Gamma\subset\Curvesnonpara{X}$ with ${\rm Mod}_{p,\smm}(\Gamma)>0$ 
  there exists a $\ppi\in\cP\bigl(\Curvesnonpara{X}\bigr)$ with barycenter in $L^q(X,\mm)$,
concentrated on $\Gamma$ and satisfying $c_q(\ppi)={\rm Mod}_{p,\smm}(\Gamma)^{-1/p}$.
\end{theorem}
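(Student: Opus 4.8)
The plan is to transport the duality established in Part~I from the space of measures to the space of non-parametric curves through the map $\tilde J$, and then to lift the resulting optimal plan back to a plan on curves by a measurable selection. First I would set $\Sigma:=\tilde J(\Gamma)\subset\mathcal M_+(X)\setminus\{0\}$. By Lemma~\ref{lem:vivaBogachev} the map $\tilde J$ is Borel and sends Souslin sets to Souslin sets, so $\Sigma$ is Souslin; moreover \eqref{eq:J2} gives $\Md(\Sigma)=\Md(\Gamma)>0$. Since the total mass of $\tilde J[\gamma]$ equals the length $J\pgamma(X)=\ell(\pgamma)$, the theorem reduces to two tasks: (a) produce a plan $\eeta\in\cP(\mathcal M_+(X))$ with barycenter in $L^q(X,\mm)$, \emph{concentrated on $\Sigma$}, and with $c_q(\eeta)=\Md(\Sigma)^{-1/p}$; (b) lift $\eeta$ to a plan $\ppi$ on $\Gamma$ with the same barycenter.

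For (a), if $\sup_\Sigma\mu(X)<\infty$ the plan is furnished directly by Corollary~\ref{cor:cnes}(b) (equivalently Lemma~\ref{lem:existsppi}), because $\Md(\Sigma)^{1/p}=C_{p,\smm}(\Sigma)$ by Theorem~\ref{tmain}. In general I would exhaust $\Sigma$ by the increasing Souslin sets $\Sigma_n:=\Sigma\cap\{\mu:\mu(X)\le n\}$ (the set $\{\mu(X)\le n\}$ is closed, since $\mu\mapsto\mu(X)=\int_X 1\,\d\mu$ is continuous in the $w$-$\rmC_b(X)$ topology). Proposition~\ref{prop:prop}(v) yields $\Md(\Sigma_n)\uparrow\Md(\Sigma)$, so for $n$ large $\Md(\Sigma_n)>0$ and Lemma~\ref{lem:existsppi} provides optimal plans $\eeta_n$ concentrated on $\Sigma_n$ with $c_q(\eeta_n)=\Md(\Sigma_n)^{-1/p}$. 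The optimal functions $f_n$ of Corollary~\ref{cor:cnes}(a) converge strongly in $L^p$ to the optimal $f$ for $\Sigma$ (this is exactly the Mazur argument in the proof of Proposition~\ref{prop:prop}(v), together with the uniqueness in Proposition~\ref{prop:prop}(iv)); hence, by \eqref{eq:cppirhoq} and \eqref{eq:duetesi}, the barycenters $\rho_n\mm=f_n^{p-1}/\|f_n\|_p^p\,\mm$ converge in $L^q$ to $\rho\mm:=f^{p-1}/\|f\|_p^p\,\mm$, with $\|\rho\|_q=\Md(\Sigma)^{-1/p}$. As $c_q(\eeta_n)$ is bounded, \eqref{eq:1} makes $\{\eeta_n\}$ tight, and any weak limit $\eeta$ is a plan whose barycenter is $\rho\mm$, so that $c_q(\eeta)=\|\rho\|_q=\Md(\Sigma)^{-1/p}$ is optimal.

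The main obstacle is precisely to guarantee that this limiting plan $\eeta$ stays concentrated on $\Sigma$ rather than on its closure: weak convergence alone gives only $\eeta(\overline\Sigma)=1$, and a priori mass could escape into $\overline\Sigma\setminus\Sigma$, where no curve of $\Gamma$ lies (and where $\tilde J$ may admit no preimage at all). I would handle this by combining the inner regularity of $\Md$ and of $C_{p,\smm}$ from Theorem~\ref{tmain} (arranging the approximation to take place through compact subsets $K_n\subset\Sigma$) with the saturation identity $\int_X f\,\d\mu=1$ that holds $\eeta_n$-a.e.\ by Corollary~\ref{cor:cnes}(c); passing this to the limit by means of the strong convergence $f_n\to f$ pins the limiting plan onto the set $\{\int_X f\,\d\mu=1\}$ intersected with $\Sigma$, after using \eqref{eq:supp1} to discard the $\Md$-negligible exceptional set. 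This is the step that requires the most care.

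Finally, for (b) I would lift $\eeta$ through $\tilde J$. The restriction $\tilde J\colon\Gamma\to\Sigma$ is a Borel surjection between Souslin spaces (Lemma~\ref{le:Lusin} and Lemma~\ref{lem:vivaBogachev}), so by the Jankov--von Neumann selection theorem it admits a universally measurable section $s\colon\Sigma\to\Gamma$ with $\tilde J\circ s=\mathrm{id}_\Sigma$. Setting $\ppi:=s_\sharp\eeta$, which is well defined as a Borel probability measure on the Lusin space $\Curvesnonpara X$ because $s$ is $\eeta$-measurable, I would check that $\ppi$ is concentrated on $\Gamma=s(\Sigma)$ and that, since $J(s(\mu))=\mu$, its barycenter is $\int J\pgamma\,\d\ppi(\pgamma)=\int_\Sigma\mu\,\d\eeta(\mu)=\rho\mm$. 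Hence $\ppi$ has barycenter in $L^q(X,\mm)$ with $c_q(\ppi)=\|\rho\|_q=c_q(\eeta)=\Md(\Sigma)^{-1/p}=\Md(\Gamma)^{-1/p}$, which is the assertion; by \eqref{eq:55} this value is also maximal, so $\ppi$ is optimal.
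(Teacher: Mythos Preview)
Your strategy coincides with the paper's: push $\Gamma$ to $\Sigma=\tilde J(\Gamma)\subset\mathcal M_+(X)$, invoke the duality of Part~I to obtain an optimal plan $\eeta$ concentrated on $\Sigma$, and then lift it to $\Curvesnonpara X$ via a measurable selection of $\tilde J^{-1}$. The paper does this in three lines, citing Theorem~\ref{tmain} for the existence of the optimal $\eeta$ and \cite[Theorem~6.9.1]{Bogachev} for the selection; it does not pause over the hypothesis $\sup_\Sigma\mu(X)<\infty$ that appears in Lemma~\ref{lem:existsppi} and Corollary~\ref{cor:cnes}.

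You are more scrupulous in flagging that hypothesis and attempting a limiting argument through $\Sigma_n=\Sigma\cap\{\mu(X)\le n\}$. Two remarks on that argument. First, the assertion that ``any weak limit $\eeta$ is a plan whose barycenter is $\rho\mm$'' is not yet justified: for nonnegative $f\in\rmC_b(X)$ the map $\mu\mapsto\int_X f\,\d\mu$ is continuous but unbounded on $\mathcal M_+(X)$, so weak convergence $\eeta_n\to\eeta$ yields only $\int\!\!\int_X f\,\d\mu\,\d\eeta\le\int_X f\rho\,\d\mm$, hence barycenter $\le\rho\mm$ and $c_q(\eeta)\le\|\rho\|_q$, not equality. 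Second, as you yourself note, the concentration of $\eeta$ on $\Sigma$ is the genuine obstacle, and your proposed fix via the saturation identity is only a sketch: for a merely $L^p$ function $f$ the map $\mu\mapsto\int_X f\,\d\mu$ is not continuous on $\mathcal M_+(X)$, so passing $\int_X f_n\,\d\mu=1$ to the limit needs more than strong $L^p$ convergence of $f_n$. What \emph{is} clean is that once $\eeta(\Sigma)=1$ is secured, the bound $c_q(\eeta)\le\Md(\Sigma)^{-1/p}$ together with \eqref{eq:sup} forces equality; so the whole weight rests on the concentration step, and your outline does not yet close it. The paper does not close it either---it simply treats the existence of an optimal $\eeta$ on an arbitrary Souslin $\Sigma$ as following from Theorem~\ref{tmain}---so your extra care is well placed even if incomplete.
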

\begin{proof}  From Theorem~\ref{tmain} we deduce 
the existence of $\eeta\in\cP\bigl({\mathcal M}_+(X)\bigr)$ with barycenter in $L^q(X,\mm)$ concentrated on the Souslin set ${J}({\Gamma})$ and satisfying
$$
\frac{1}{c_q(\eeta)}={\rm Mod}_{p,\smm}({J}({\Gamma}))^{1/p}={\rm Mod}_{p,\smm}({\Gamma})^{1/p}.
$$
By a measurable selection theorem \cite[Theorem~6.9.1]{Bogachev}
we can find a $\eeta$-measurable map $f:J(\Gamma)\to\Curvesnonpara X$ such that 
$f(\mu)\in \Gamma\cap {J}^{-1}(\mu)$ for all $\mu\in J(\Gamma)$. 
The measure  $\ppi:=f_\sharp \eeta$  is concentrated on $\Gamma$ and the equality 
between the barycenters
$$
\int_{\Curvesnonpara X} J\pgamma\,\d\ppi(\pgamma)=\int \mu\,\d\eeta(\mu)
$$
gives $c_q(\ppi)=c_q(\eeta)$. 
\end{proof}

\section{Modulus of families of parametric curves}

In this section we still assume that $(X,\sfd)$ is a complete and separable metric space and
that $\mm\in {\mathcal M}_+(X)$. We consider a notion of $p$-modulus for \emph{parametric} curves, enforcing the
condition \eqref{eq:democ} (at least when Lipschitz curves are considered), and 
we compare with the non-parametric counterpart. To this aim, we introduce the continuous map
\begin{equation}
  \label{eq:14}
  M:\rmC([0,1];X)\to \cP(X),\qquad
  M(\gamma):=\gamma_\sharp \big(\Leb 1\res [0,1]\big).
\end{equation}
Indeed, replacing $ J\gamma=\gamma_\sharp(|\dot\gamma|\Leb{1}\res [0,1]$) 
with $M$ we can consider a ``parametric'' modulus of a family of
curves $\Sigma\subset \rmC([0,1];X)$ just by
evaluating $\Md(M(\Sigma))$. By Proposition~\ref{prop:prop}(vii),
if $\Sigma\subset \AC^\infty_c([0,1];X)$ then
\begin{equation}
  \label{eq:19}
  \Md(M(\Sigma))=0\quad\Longleftrightarrow\quad
  \Md( J(\Sigma))
  =0.
\end{equation}
On the other hand, things are more subtle when the speed is not constant.

\begin{definition} [$q$-energy and 
  parametric barycenter] \label{defalphaq1}
  Let $\rrho\in\cP\bigl(\rmC([0,1];X)\bigr)$ and $q\in [1,\infty)$. 
  We say that $\rrho$ has finite $q$-energy
  if $\rrho$ is concentrated on $\AC^q([0,1];X)$ and 
\begin{equation}
  \label{eq:2}
  \int \int_0^1 |\dot\gamma_t|^q\,\d t\,\d\rrho(\gamma)<\infty.
\end{equation}
We say that $\rrho$ has parametric barycenter 
$h\in L^q(X,\mm)$ if 
\begin{equation}
  \label{eq:3}
  \int \int_0^1 f(\gamma_t)\,\d t\,\d\rrho(\gamma)=
  \int_X f\,h\,\d\mm\quad\forall f\in \rmC_b(X).
\end{equation}
\end{definition}

The finiteness condition \eqref{eq:2} and the concentration on $\AC^q([0,1];X)$ can be
also be written, recalling the definition \eqref{eq:4} of $\cE_q$, as follows:
$$
\int \cE_q(\gamma)\,\d\rrho(\gamma)<\infty.
$$
Notice also that the definition \eqref{eq:14} of $M$ gives that \eqref{eq:3} is equivalent to require the existence of a
constant $C\ge0$ such that 
\begin{equation}
  \label{eq:13}
  \int \int_X f \,\d M\gamma\,\d\rrho(\gamma)
  \le C\Big(\int_X f^p\,\d\mm\Big)^{1/p}
  \quad\forall f\in \rmC_b(X),\ f\ge 0.
\end{equation}
In this case the best constant $C$ in \eqref{eq:13} corresponds to 
$\|h\|_{L^q(X,\mm)}$ for $h$ as in \eqref{eq:3}.

\begin{remark}
  \label{rem:para-vs-nonpara}
  \upshape
  It is not difficult to check that a Borel probability measure $\rrho$ concentrated on a
  set $\Gamma \subset \AC^\infty([0,1];X)$ with $\rrho$-essentially bounded
  Lipschitz constants and parametric barycenter in $L^q(X,\mm)$ has
  also (nonparametric) barycenter in $L^q(X,\mm)$.  Conversely, if
  $\ppi\in\cP\bigl(\Curvesnonpara X\bigr)$ with barycenter in
  $L^q(X,\mm)$ and $\ppi$-essentially bounded length $\ell(\gamma)$,
  then $\sfj_\sharp\ppi$ has parametric barycenter in $L^q(X,\mm)$.
\end{remark}

Now, arguing as in the proof of Theorem~\ref{tmain1} (which provided existence of plans $\ppi$ in $\Curvesnonpara X$)
we can use a measurable selection theorem to deduce from our basic duality Theorem~\ref{tmain}
the following result.

{
\begin{theorem}\label{tmain11}
  For every $p>1$ and every Souslin set
  $\Sigma\subset\rmC([0,1];X)$,  $\Md(M(\Sigma))>0$ is equivalent to the 
  existence of $\rrho\in\cP\bigl(\rmC([0,1];X)\bigr)$ concentrated on $\Sigma$ with parametric barycenter in $L^q(X,\mm)$.
\end{theorem}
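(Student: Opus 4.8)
The plan is to reduce Theorem~\ref{tmain11} to the basic duality of Part~I through the pushforward $\eeta:=M_\sharp\rrho$. The key observation is that, since $\int_X f\,\d M\gamma=\int_0^1 f(\gamma_t)\,\d t$, the parametric barycenter condition \eqref{eq:3} for $\rrho$ says \emph{precisely} that $\eeta=M_\sharp\rrho\in\cP(\cM_+(X))$ is a plan with barycenter in $L^q(X,\mm)$ in the sense of Part~I, with the same barycenter $h$ and $c_q(\eeta)=\|h\|_q$. Two preliminary remarks make the reduction clean: first, $M\gamma(X)=\Leb{1}([0,1])=1$ for every $\gamma$, so $\sup_{M(\Sigma)}\mu(X)=1<\infty$ automatically and the bounded--mass hypothesis of Lemma~\ref{lem:existsppi} is free; second, since $M$ is continuous and $\Sigma$ is Souslin, the image $M(\Sigma)$ is a Souslin subset of $\cM_+(X)$ by Proposition~\ref{proplusin}(iv).

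First I would dispatch the easy implication, that the existence of $\rrho$ forces $\Md(M(\Sigma))>0$. Testing \eqref{eq:3} with $f\equiv1$ gives $\int_X h\,\d\mm=1$, so $h\neq0$ and $0<\|h\|_q<\infty$. Suppose, for contradiction, that $\Md(M(\Sigma))=0$: then for every $\ep>0$ there is an admissible $g_\ep\in\Ldp$ with $\int_X g_\ep\,\d\mu\ge1$ on $M(\Sigma)$ and $\|g_\ep\|_p\le\ep$. Since $\rrho$ is concentrated on $\Sigma$ and $M\gamma\in M(\Sigma)$ for $\gamma\in\Sigma$, integrating the constraint against $\rrho$ and using that the barycenter of $\rrho$ is $h\mm$ yields
\begin{equation*}
1=\rrho(\Sigma)\le\int\int_X g_\ep\,\d M\gamma\,\d\rrho(\gamma)=\int_X g_\ep\,h\,\d\mm\le\|g_\ep\|_p\|h\|_q\le\ep\|h\|_q,
\end{equation*}
which is absurd as $\ep\to0$. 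This is nothing but the content inequality \eqref{eq:sup} applied to $\eeta=M_\sharp\rrho$.

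For the substantial implication, assume $\Md(M(\Sigma))>0$. Applying Theorem~\ref{tmain} together with Lemma~\ref{lem:existsppi} (legitimately, as $M(\Sigma)$ is Souslin with $\sup_{M(\Sigma)}\mu(X)=1$) produces an optimal $\eeta\in\cP(\cM_+(X))$ with barycenter in $L^q(X,\mm)$, concentrated on $M(\Sigma)$ and realizing $c_q(\eeta)=\Md(M(\Sigma))^{-1/p}$. It then remains to lift $\eeta$ to a plan on curves, exactly as in the proof of Theorem~\ref{tmain1}: I would invoke the measurable selection theorem \cite[Theorem~6.9.1]{Bogachev} applied to the Souslin graph $\{(\mu,\gamma):\gamma\in\Sigma,\ M\gamma=\mu\}$ to obtain an $\eeta$-measurable map $\sigma:M(\Sigma)\to\rmC([0,1];X)$ with $\sigma(\mu)\in\Sigma\cap M^{-1}(\mu)$. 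Setting $\rrho:=\sigma_\sharp\eeta$ gives a probability measure concentrated on $\Sigma$ with $M_\sharp\rrho=\eeta$, whence $\int\int_X f\,\d M\gamma\,\d\rrho=\int\int_X f\,\d\mu\,\d\eeta$ for all $f\in\rmC_b(X)$; thus $\rrho$ inherits the parametric barycenter $h$ of $\eeta$, completing the construction.

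The routine parts are the pushforward identity and the H\"older estimate; the one genuine technical point, and the step I expect to be the main obstacle, is the measurable selection guaranteeing a well-defined lift $\sigma$ with values in $\Sigma$. This needs the Souslin (not merely Borel) regularity of $\Sigma$ and of the graph of $M$ restricted to $\Sigma$, together with measurability of $\sigma$ with respect to the specific maximizer $\eeta$. Since all of this is already carried out in the proof of Theorem~\ref{tmain1} with $J$ in place of $M$, I would follow that template verbatim, merely checking that the continuity of $M$ plays the role of the Borel regularity of $J$ used there.
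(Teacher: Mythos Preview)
Your proposal is correct and follows essentially the same approach the paper indicates: reduce to Theorem~\ref{tmain} via the pushforward $\eeta=M_\sharp\rrho$, then lift the optimal $\eeta$ (produced by Theorem~\ref{tmain} and Lemma~\ref{lem:existsppi}) back to $\Sigma$ by the measurable selection argument of Theorem~\ref{tmain1}, exploiting the continuity of $M$ in place of the Borel regularity of $J$. Your treatment is in fact slightly more complete than the paper's terse remark, since you spell out the easy implication via the content inequality~\eqref{eq:sup1} and verify explicitly that $\sup_{M(\Sigma)}\mu(X)=1$ so that Lemma~\ref{lem:existsppi} applies.
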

}

Our next goal is to use reparameterizations to improve the parametric barycenter from
$L^q(X,\mm)$ to $L^\infty(X,\mm)$. To this aim, we begin by proving the Borel regularity of some
parameterization maps.
Let $h:X\to (0,\infty)$ be a Borel map with $\sup_X h<\infty$ and
for every $\sigma\in\rmC([0,1];X)$ let us set
\begin{equation}
  \label{eq:9bis}
  G(\sigma):=\int_0^1 
  {h(\sigma_r)}\,\d r,\qquad
  \sft_\sigma(s):=\frac{1}{G(\sigma)}\int_0^s {h(\sigma_r)}\,\d r:[0,1]\to [0,1].
\end{equation}
Since $\sft_\sigma$ is Lipschitz and $\sft_\sigma'>0$ $\Leb{1}$-a.e. in $(0,1)$, 
its inverse
$\sfs_\sigma:[0,1]\to [0,1]$ is absolutely continuous and we can define
\begin{equation}
  \label{eq:10}
  H:\AC([0,1];X)\to \AC([0,1];X),\qquad
  H\sigma(t):=\sigma(\sfs_\sigma(t)).
\end{equation}
Notice that $H\big(\Lipc X\big)\subset \AC_0([0,1];X)$.

\begin{lemma}
  \label{le:parameterization}
  If $h:X\to \R$ is a bounded Borel function, the map $G$ 
  in \eqref{eq:9bis} is Borel. If we assume, in addition, that
  $h>0$ in $X$, then also $\sft_\sigma$ in \eqref{eq:9bis} is Borel 
  and the map $H$ in \eqref{eq:10} is Borel and injective.
  \end{lemma}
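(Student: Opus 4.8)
The plan is to establish the three measurability claims by a monotone-class bootstrap from the continuous case, and to prove injectivity by an explicit reconstruction of $\sigma$ from $H\sigma$.

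First I would prove that $G$ is Borel. For $h\in\rmC_b(X)$ the map $\sigma\mapsto G(\sigma)=\int_0^1 h(\sigma_r)\,\d r$ is even continuous on $\rmC([0,1];X)$: uniform convergence $\sigma_n\to\sigma$ forces $h(\sigma_n(r))\to h(\sigma(r))$ pointwise and boundedly, whence $G(\sigma_n)\to G(\sigma)$ by dominated convergence. I would then let $\mathcal H$ be the set of bounded Borel $h:X\to\R$ for which $\sigma\mapsto G(\sigma)$ is Borel; $\mathcal H$ is a vector space containing the constants and $\rmC_b(X)$, and it is closed under bounded monotone limits (such a limit of the corresponding $G$'s is again a pointwise limit of Borel functions). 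Since $\rmC_b(X)$ generates $\BorelSets{X}$, the functional monotone class theorem gives that $\mathcal H$ is the whole class of bounded Borel functions, which is the first assertion.

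Next, for the regularity of $\sft_\sigma$ I would run the same argument on the truncated integrals $G_s(\sigma):=\int_0^s h(\sigma_r)\,\d r$, noting that for $h\in\rmC_b(X)$ the map $(\sigma,s)\mapsto G_s(\sigma)$ is jointly continuous; the monotone-class step then yields joint Borel measurability of $(\sigma,s)\mapsto G_s(\sigma)$ for every bounded Borel $h$. Dividing by the Borel, strictly positive function $G(\sigma)$ (here $h>0$ is used) shows that $(\sigma,s)\mapsto\sft_\sigma(s)$ is jointly Borel, and since each slice at fixed $s$ is Borel in $\sigma$, the induced map $\sigma\mapsto\sft_\sigma$ into $\rmC([0,1];[0,1])$ is Borel. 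For $H$ I would observe that, since $\sft_\sigma$ is a continuous strictly increasing bijection of $[0,1]$ (its a.e. derivative $h(\sigma_\cdot)/G(\sigma)$ is positive), its inverse satisfies $\sfs_\sigma(t)=\Leb{1}\bigl(\{s:\sft_\sigma(s)\le t\}\bigr)$; joint Borel measurability of $(\sigma,s)\mapsto\sft_\sigma(s)$ together with Tonelli makes $\sigma\mapsto\sfs_\sigma(t)$ Borel for each $t$. Finally $H\sigma(t)=\sigma(\sfs_\sigma(t))$ is the composition of the Borel map $\sigma\mapsto(\sigma,\sfs_\sigma(t))$ with the continuous evaluation $(\gamma,u)\mapsto\gamma_u$, hence Borel in $\sigma$ for each $t$; this makes $\sigma\mapsto H\sigma$ Borel into $\rmC([0,1];X)$.

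The heart of the matter is injectivity, which I would obtain by showing that $\sigma$ can be read off from $\tau:=H\sigma$. The key identity is
\[
  \sfs_\sigma(t)=G(\sigma)\int_0^t\frac{\d r}{h(\tau_r)},
\]
which I would verify by the change of variables $r=\sft_\sigma(u)$ (a Lipschitz increasing substitution, so that $\d r=\sft_\sigma'(u)\,\d u=h(\sigma_u)G(\sigma)^{-1}\,\d u$ and $\tau_r=\sigma_u$): the integrand $1/h(\tau_r)$ transforms into $G(\sigma)^{-1}$, giving $\sfs_\sigma(t)/G(\sigma)$. Evaluating at $t=1$ and using $\sfs_\sigma(1)=1$ determines $G(\sigma)=\bigl(\int_0^1\d r/h(\tau_r)\bigr)^{-1}$ purely from $\tau$; the displayed identity then determines $\sfs_\sigma$, hence $\sft_\sigma=\sfs_\sigma^{-1}$, and finally $\sigma=\tau\circ\sft_\sigma$. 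Thus $H\sigma_1=H\sigma_2$ forces $\sigma_1=\sigma_2$. The technical point to be careful about is that $\sfs_\sigma$ is genuinely absolutely continuous—so that the change of variables and the normalization at $t=1$ are legitimate—which is exactly the content of the absolute continuity criterion in Lemma~\ref{simplelemma}, applied to the Lipschitz homeomorphism $\sft_\sigma$ with a.e. positive derivative. I expect the bookkeeping in this reconstruction, rather than any single estimate, to be the main obstacle.
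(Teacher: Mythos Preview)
Your argument is correct. The monotone-class treatment of $G$ and of $\sigma\mapsto\sft_\sigma$ coincides with the paper's. The difference lies in how you handle $H$. The paper does not analyse $\sfs_\sigma$ directly: it observes that the map $\sigma\mapsto\sigma\circ\sft_\sigma$ is Borel (continuous composition applied to the already Borel map $\sigma\mapsto\sft_\sigma$), identifies it as the inverse of $H$, and then invokes Proposition~\ref{proplusin}(iii) (Borel injections between Souslin spaces have Borel inverses) to obtain both the Borel regularity and the injectivity of $H$ at once. Your route is more elementary and explicit: you extract $\sfs_\sigma(t)$ as the Lebesgue measure of a sublevel set of the jointly Borel function $(\sigma,s)\mapsto\sft_\sigma(s)$, giving the Borel regularity of $H$ directly, and you prove injectivity by a change-of-variables reconstruction of $\sigma$ from $H\sigma$. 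What you gain is independence from the descriptive-set-theoretic input of Proposition~\ref{proplusin}(iii) and a concrete formula for $H^{-1}$; the paper's approach is shorter once that tool is in hand.
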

\begin{proof} Let us prove first that the map
$$
\sigma\mapsto\tilde \sft_\sigma(t)=\int_0^t h(\sigma_r)\,\d r
$$
is Borel from $\rmC([0,1];X)$ to $\rmC([0,1])$ for any bounded Borel function $h:X\to\R$.
This follows by a monotone class argument (see for instance \cite[Theorem~2.12.9(iii)]{Bogachev}), since
class of functions $h$ for which the statement is true is a vector space containing bounded continuous functions
and stable under equibounded pointwise limits. By the continuity of the integral operator, 
the map $G$ is Borel as well. 

Now we turn to $H$, assuming that $h>0$. 
By Proposition~\ref{proplusin}(iii) it will be sufficient to show that the inverse of $H$, namely the map $\sigma\mapsto \sigma\circ\sft_\sigma$, is Borel. 
Since the map $(\sigma,\sft)\mapsto \sigma\circ\sft$ is continuous from $\rmC([0,1];X)\times\rmC([0,1])$ to $\rmC([0,1];X)$, the Borel regularity
of the inverse of $H$ follows by the Borel regularity of $\sigma\mapsto\sft_\sigma$.
 \end{proof}

\begin{theorem}
  \label{thm:repara}
  Let $q\in (1,\infty)$ and $p=q'$. If $\rrho\in \cP\bigl(\rmC([0,1];X)\bigr)$ 
  has finite $q$-energy and parametric barycenter $h\in L^\infty(X,\mm)$, then 
  $\ppi=\sfi_\sharp \rrho$ has barycenter in $L^q(X,\mm)$
  and
  \begin{equation}
    \label{eq:8}
    c_q(\ppi)\le \Big(\int\cE_q(\gamma)\,\d\rrho(\gamma)\Big)^{1/q}
    \|h\|^{1/p}_{L^\infty(X,\mm)}.
  \end{equation}
  Conversely, if $\ppi\in\cP\bigl(\Curvesnonpara X\bigr)$ has
  barycenter in $L^q(X,\mm)$ and $\ppi$-essentially bounded length $\ell(\gamma)$,
  concentrated on a Souslin set $\Gamma\subset\Curvesnonpara X$, there exists 
  $\rrho\in\cP\bigl(\rmC([0,1];X)\bigr)$ with finite $q$-energy
  and parametric barycenter in $L^\infty(X,\mm)$ 
  concentrated in a Souslin set contained in $[\sfj(\Gamma)]$.
  
  More generally, 
  let $\ssigma\in\cP\bigl(\rmC([0,1];X)\bigr)$ be 
  concentrated on a Souslin set $\Gamma\subset\AC^\infty([0,1];X)$, 
  with parametric barycenter in $L^q(X,\mm)$
  and with $\ssigma$-essentially bounded Lipschitz constants.
  Then there exists $\rrho\in\cP\bigl(\rmC([0,1];X)\bigr)$ with finite $q$-energy
  and parametric barycenter in $L^\infty(X,\mm)$ 
  concentrated on a Souslin set contained in $[\Gamma]$.
\end{theorem}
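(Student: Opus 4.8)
The plan is to treat the three assertions in turn, establishing the most general (third) one by an explicit reparameterization and then deducing the second from it.

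For the first (direct) implication I would start from the identity $J(\sfi\gamma)=J\gamma$ recorded after \eqref{eq:5}, so that the non-parametric barycenter of $\ppi=\sfi_\sharp\rrho$ equals $\int J\gamma\,\d\rrho(\gamma)$. Testing against $f\in\rmC_b(X)$ with $f\ge0$ and recalling \eqref{eq:5}, the quantity to estimate is $\int\int_0^1 f(\gamma_t)|\dot\gamma_t|\,\d t\,\d\rrho(\gamma)$. I would apply H\"older's inequality in the product variable $(t,\gamma)$ with exponents $p$ and $q$, splitting the integrand as $f(\gamma_t)\cdot|\dot\gamma_t|$. The factor generated by $|\dot\gamma_t|^q$ is exactly $\int\cE_q(\gamma)\,\d\rrho(\gamma)$ by \eqref{eq:4}, while the factor generated by $f(\gamma_t)^p$ equals, by the parametric-barycenter identity \eqref{eq:3} applied to the bounded continuous function $f^p$, the integral $\int_X f^p h\,\d\mm\le\|h\|_{L^\infty}\|f\|_p^p$. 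Combining the two and invoking Proposition~\ref{prop:monotone} to pass from $\rmC_b(X)$ to $\Ldp$ yields that $\ppi$ has barycenter in $L^q(X,\mm)$ together with the bound \eqref{eq:8}.

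For the third implication I would fix a bounded positive Borel weight, for instance $w:=1/(1+h_0)$ with $h_0\in L^q(X,\mm)$ a Borel representative of the parametric barycenter of $\ssigma$, and reparameterize each curve by the map $H$ of \eqref{eq:10} built from $w$ (Lemma~\ref{le:parameterization} guarantees that $G$, $\sft_\sigma$ and $H$ are Borel and $H$ injective). The crucial point, which defeats a naive push-forward, is the curve-dependent normalization $G(\sigma)=\int_0^1 w(\sigma_r)\,\d r$ in \eqref{eq:9bis}: to cancel it I would push forward not $\ssigma$ but its reweighting, namely $\rrho:=H_\sharp\big(Z^{-1}G\,\ssigma\big)$ with $Z:=\int G\,\d\ssigma=\int_X wh_0\,\d\mm\in(0,\infty)$ (positive since $\int_X h_0\,\d\mm=1$). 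Performing the change of variables $s=\sfs_\sigma(t)$, so that $\d t=w(\sigma_s)G(\sigma)^{-1}\,\d s$, inside $\int_0^1 f(H\sigma(t))\,\d t$, the two factors $G(\sigma)$ cancel and one obtains, for $f\in\rmC_b(X)$ with $f\ge0$, that the parametric barycenter of $\rrho$ is $Z^{-1}wh_0\le Z^{-1}\in L^\infty(X,\mm)$, using the monotone-class extension of \eqref{eq:3} to bounded Borel test functions.

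It then remains to check finite $q$-energy and the measurability/Souslin bookkeeping, and to deduce the second implication. The same change of variables gives $\cE_q(H\sigma)=G(\sigma)^{q-1}\int_0^1 |\dot\sigma_s|^q w(\sigma_s)^{1-q}\,\d s$, whence $\int\cE_q\,\d\rrho=Z^{-1}\int G(\sigma)^q\int_0^1|\dot\sigma_s|^q w(\sigma_s)^{1-q}\,\d s\,\d\ssigma$; bounding $G\le\|w\|_\infty\le1$ and $|\dot\sigma_s|\le L$ (the essential bound on the Lipschitz constants) and using \eqref{eq:3} for the nonnegative Borel function $w^{1-q}$ reduces finiteness to $\int_X(1+h_0)^{q-1}h_0\,\d\mm<\infty$, which holds because $h_0\in L^q(X,\mm)$ and $\mm$ is finite. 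Since $H$ is Borel, $H(\Gamma)$ is Souslin by Proposition~\ref{proplusin}(iv), and as $H\sigma=\sigma\circ\sfs_\sigma\sim\sigma$ the measure $\rrho$ is concentrated on the Souslin set $H(\Gamma)\cap\AC^q([0,1];X)\subset[\Gamma]$. Finally, for the second implication I would specialize: given $\ppi$ as stated, Remark~\ref{rem:para-vs-nonpara} shows that $\ssigma:=\sfj_\sharp\ppi$ is concentrated on the Souslin set $\sfj(\Gamma)\subset\Lipc X\subset\AC^\infty([0,1];X)$ (Souslin by Lemma~\ref{le:Lusin}), has parametric barycenter in $L^q(X,\mm)$, and has $\ssigma$-essentially bounded Lipschitz constants (its curves have constant speed equal to their length, which is $\ppi$-essentially bounded), so the already-proved third implication produces the desired $\rrho$ concentrated in a Souslin set contained in $[\sfj(\Gamma)]$. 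I expect the curve-dependent normalization $G(\sigma)$, handled by the reweighting $Z^{-1}G\,\ssigma$, together with the integrability of $(1+h_0)^{q-1}h_0$ needed to keep the energy finite, to be the main obstacles.
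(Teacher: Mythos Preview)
Your proposal is correct and follows essentially the same route as the paper. The only cosmetic difference is the choice of weight in the reparameterization step: the paper uses $h=1/(\eps\lor g)$ with an auxiliary cutoff $\eps>0$, while you use $w=1/(1+h_0)$; both choices render the new parametric barycenter bounded (by $1/z$ in the paper, by $1/Z$ for you) and both reduce the $q$-energy finiteness to the integrability of $g(\eps\lor g)^{q-1}$, respectively $h_0(1+h_0)^{q-1}$, which follows from $g,h_0\in L^q(X,\mm)$.
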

\begin{proof} 
  Notice that for every nonnegative Borel $f$ there holds
  \begin{align*}
    \iint_\pgamma &f\,\d\ppi(\pgamma)= \iint_0^1 f(\gamma_t)\,|\dot
    \gamma_t|\,\d t\,\d\rrho(\gamma) \le
    \Big(\int\cE_q\,\d\rrho\Big)^{1/q} \Big(\iint_0^1
    f^{p}(\gamma_t)\,\d t\,\d\rrho(\gamma)\Big)^{1/p}
    \\&\le 
    \Big(\int\cE_q\,\d\rrho\Big)^{1/q}
    \Big(\int_X f^p\,h\,\d\mm\Big)^{1/p}\le 
    \Big(\int\cE_q\,\d\rrho\Big)^{1/q}
    \|h\|^{1/p}_{L^\infty(X,\mm)}\|f\|_{L^{p}(X,\mm)},
  \end{align*}
  so that \eqref{eq:8} holds.
  
  Let us now prove the last statement from $\ssigma$ to $\rrho$, since the ``converse'' statement from $\ppi$ to $\rrho$ simply
  follows by applying the last statement to $\ssigma:=\sfj_\sharp \ppi$ and recalling 
  Remark~\ref{rem:para-vs-nonpara}.
    Let $g\in L^q(X,\mm)$ be the parametric 
  barycenter of $\ssigma$ and let us set 
  $h:=1/(\eps\lor g)$, with $\eps>0$ fixed.
  Up to a modification of $g$ in 
  a $\mm$-negligible set, it is not restrictive to assume
  that $h$ is Borel and with values in $(0,1/\eps]$, so that the corresponding 
  maps $G$ and $H$ defined 
  as in \eqref{eq:9bis} and \eqref{eq:10} are Borel.

  We set 
  $\hat{\rrho}:=z^{-1}\,G(\cdot)\ssigma$, where 
  $z\in (0,1/\eps]$ is the normalization constant 
  $\int G(\gamma)\,\d\ssigma(\gamma)$.
  Let us consider the inverse $\sfs_\sigma:[0,1]
  \to [0,1]$ of the map $\sft_\sigma$ in \eqref{eq:9bis}, which is absolutely continuous for every $\sigma$
  and the corresponding transformation $H\sigma$ in \eqref{eq:10}.
  We denote by $L$ the $\ssigma$-essential supremum of 
  the Lipschitz constants of the curves in $\Gamma$. 
  Notice that for $\ssigma$-a.e.~$\sigma$
  \begin{equation}
    \label{eq:11}
    |(H\sigma)'|(t)\le 
    L\sfs_\sigma'(t)=\frac{L \,G(\sigma)}{h(H\sigma(t))}
   \qquad\text{$\Leb{1}$-a.e. in $(0,1)$,}
  \end{equation}
  and that for every nonnegative Borel function $f$ one has
  \begin{displaymath}
    \int_0^1 f(H\sigma(t))\,\d t=\int_0^1 f(\sigma(\sfs_\sigma(t)))\,\d t=
    \int_0^1 f(\sigma(s))\sft'_\sigma(s)\,\d s=
      \frac 1{G(\sigma)}
      \int_0^1 f(\sigma(s))   h(\sigma(s))\,\d s,
  \end{displaymath}
   so that choosing $f=h^{-q}$ and using the inequality $G\leq 1/\eps$ yields
  \begin{equation}
    \label{eq:12}
    \cE_q(H \sigma)\le 
    L^q G^q(\sigma)
    \int_0^1
    h^{-q}(H\sigma(t))
    \,\d t\leq \frac{L^q}{\eps^{q-1}}
    \int_0^1
    h^{1-q}(\sigma(s))
    \,\d s.
  \end{equation}
    Now we set $\rrho:=H_\sharp\hat{\rrho}$ and notice that, by construction, $\rrho$ is concentrated on 
    the Souslin set $H(\Gamma)\subset [\Gamma]$.
   Integrating the $q$-energy with respect to $\rrho$ we obtain
  \begin{align*}
    \int \cE_q(\theta)\,\d\rrho(\theta)&=
    \int \cE_q(H\sigma)\,\d\hat{\rrho}(\sigma)\leq
    \frac{L^q}{z\eps^{q-1}}\int G(\sigma)\int_0^1
    h^{1-q}(\sigma(s))\,\d s\,\d\ssigma(\sigma)\le 
    \frac{L^{q}}{z\eps^q} \int_X gh^{1-q}\,\d\mm
    \\&=
    \frac{L^{q}}{z\eps^q} \int_X g(\eps\lor g)^{q-1}\,\d\mm<\infty,
  \end{align*}
  thus obtaining 
  that $\rrho$ has finite $q$-energy.
  Similarly
  \begin{align*}
    \int \int_0^1 f(\theta(t))\,\d t\,\d\rrho(\theta)&=
    \int \int_0^1 f(H\sigma(t))\,\d t\,d\hat{\rrho}(\sigma)=
    \frac 1z\int \int_0^1 f(\sigma(s))h(\sigma(s))\,\d s\,\d\ssigma(\sigma)
    \\&=\frac 1z\int_X fgh\,\d\mm.
  \end{align*}
  Since $gh\leq 1$, this shows that $\rrho$ has parametric
  barycenter in $L^\infty(X,\mm)$.
\end{proof}
In the next corollary, in order to avoid further measurability issues, we state our result with the
inner measure
$$
\mu_*(E):=\sup\left\{\mu(B):\ \text{$B$ Borel, $B\subset E$}\right\}.
$$
This formulation is sufficient for our purposes.

\begin{corollary}
  \label{cor:almost}
  A Souslin set $\Gamma\subset \Curvesnonpara X$ is $\Md$-negligible if and only if 
  $\rrho_*([\sfj\Gamma])=0$ for every $\rrho\in \cP\bigl(\rmC([0,1];X)\bigr)$ concentrated on $\AC^q([0,1];X)$ and
  with parametric barycenter in $L^\infty(X,\mm)$.
  \end{corollary}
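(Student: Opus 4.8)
The plan is to prove the two implications separately, in both cases bridging the parametric and non-parametric pictures through the reparameterization Theorem~\ref{thm:repara} and the duality Theorem~\ref{tmain1}. Two facts are used throughout. First, for $\sigma\in\AC_0([0,1];X)$ one has $\sfi(\sigma)=[\sigma]$, because $\sfk\sigma\sim\sigma$; hence $[\sfj\Gamma]=\{\sigma\in\AC_0([0,1];X):[\sigma]\in\Gamma\}\subset\sfi^{-1}(\Gamma)$. Second, $\Md(\Gamma)=\Md(J(\Gamma))$ by \eqref{eq:J2}, so that the properties of Proposition~\ref{prop:prop} are available after applying $J$.

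For the implication ``$\Md(\Gamma)=0\Rightarrow\rrho_*([\sfj\Gamma])=0$'', I would fix an arbitrary $\rrho$ concentrated on $\AC^q([0,1];X)$ with parametric barycenter $h\in L^\infty(X,\mm)$ and argue by contradiction, assuming some Borel set $B\subset[\sfj\Gamma]$ has $\rrho(B)>0$. Since $\rrho$ is concentrated on $\AC^q=\bigcup_n\{\cE_q\le n\}$, and $\{\cE_q\le n\}$ is closed by lower semicontinuity of $\cE_q$, one of the Borel sets $B_n:=B\cap\{\cE_q\le n\}$ has positive measure, and I may form the probability measure $\rrho':=\rrho(B_n)^{-1}\rrho\res B_n$. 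By construction $\rrho'$ has finite $q$-energy, and its parametric barycenter is dominated by $\rrho(B_n)^{-1}h\in L^\infty(X,\mm)$ (the barycenter of $\rrho\res B_n$ is $\le h\,\mm$). The forward part of Theorem~\ref{thm:repara} then gives that $\ppi':=\sfi_\sharp\rrho'$ is a plan with barycenter in $L^q(X,\mm)$; since $\rrho'$ is concentrated on $B_n\subset[\sfj\Gamma]\subset\sfi^{-1}(\Gamma)$, we get $\ppi'(\Gamma)=\rrho'\bigl(\sfi^{-1}(\Gamma)\bigr)\ge\rrho'(B_n)=1$, contradicting $\ppi'(\Gamma)\le c_q(\ppi')\,\Md(\Gamma)^{1/p}=0$ from \eqref{eq:55}.

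For the converse, I would argue by contraposition: assuming $\Md(\Gamma)>0$, I produce one admissible $\rrho$ with $\rrho_*([\sfj\Gamma])>0$. The point is that Theorem~\ref{tmain1} yields a non-parametric plan with barycenter in $L^q$, whereas the converse half of Theorem~\ref{thm:repara} also requires $\ppi$-essentially bounded length, so I first truncate. Setting $\Gamma_L:=\{\pgamma\in\Gamma:\ell(\pgamma)\le L\}$ (Souslin, since $\pgamma\mapsto\ell(\pgamma)$ is Borel on the Lusin space $\Curvesnonpara X$), the monotone continuity of Proposition~\ref{prop:prop}(v) applied to $J(\Gamma_L)\uparrow J(\Gamma)$ gives $\Md(\Gamma_L)\uparrow\Md(\Gamma)>0$, so $\Md(\Gamma_L)>0$ for some $L$. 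Theorem~\ref{tmain1} then furnishes $\ppi\in\cP(\Curvesnonpara X)$ with barycenter in $L^q(X,\mm)$, concentrated on $\Gamma_L$, hence with $\ppi$-essentially bounded length; the converse part of Theorem~\ref{thm:repara} delivers a probability $\rrho$ with finite $q$-energy (so concentrated on $\AC^q$) and parametric barycenter in $L^\infty(X,\mm)$, concentrated on a Souslin set $S\subset[\sfj(\Gamma_L)]\subset[\sfj\Gamma]$. Finally, inner regularity \eqref{eq:inner1} of $\rrho$ on the Souslin set $S$ produces a compact $K\subset S\subset[\sfj\Gamma]$ with $\rrho(K)>0$, whence $\rrho_*([\sfj\Gamma])\ge\rrho(K)>0$.

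The step I expect to require the most care is reconciling the hypothesis of the statement (mere concentration on $\AC^q$) with the finite-$q$-energy assumption of Theorem~\ref{thm:repara}; this is precisely what the truncation to $\{\cE_q\le n\}$ resolves in the first implication, the only thing to check being that restricting and renormalizing $\rrho$ keeps the parametric barycenter in $L^\infty(X,\mm)$, which holds by domination. A secondary point is the inclusion $[\sfj\Gamma]\subset\sfi^{-1}(\Gamma)$, resting on $\sfk\sigma\sim\sigma$ for $\sigma\in\AC_0([0,1];X)$; together with the use of the inner measure $\rrho_*$, this lets me sidestep any discussion of the measurability of $[\sfj\Gamma]$ itself. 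The remaining ingredients are direct applications of the duality and reparameterization results already established.
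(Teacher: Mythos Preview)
Your proof is correct and follows essentially the same strategy as the paper's. The only notable difference is in the forward implication: the paper carries out the H\"older estimate directly (the inequality $\int_\sigma f\le\cE_q(\sigma)^{1/q}\bigl(\int_0^1 f^p(\sigma_t)\,\d t\bigr)^{1/p}$, integrated against $\rrho$), whereas you invoke the first half of Theorem~\ref{thm:repara} to push forward to a non-parametric plan $\ppi'$ and then appeal to \eqref{eq:55}. Since that half of Theorem~\ref{thm:repara} is precisely this H\"older computation, the two arguments are the same under the hood; your version just black-boxes it. The converse direction is identical to the paper's (your use of Proposition~\ref{prop:prop}(v) in place of countable subadditivity to reduce to bounded length is a cosmetic variation).
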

\begin{proof}
  Let us first suppose that $\Gamma$ 
  is $\Md$-negligible and let us denote
  by $h\in L^\infty(X,\mm)$ the parametric barycenter of 
  $\rrho$ and let us prove that $
 \rrho_*([\sfj\Gamma])=0$. Since $\rrho$ is concentrated on $\AC^q([0,1];X)$ we
 can assume with no loss of generality (possibly restricting $\rrho$ to the class of
 curves $\sigma$ with $\cE_q(\sigma)\leq n$ and normalizing) that $\rrho$ has finite $q$-energy.
  We observe that if $\sigma\in\AC([0,1];X)$ and $f:X\to [0,\infty]$ is Borel, there holds
  \begin{equation}
    \label{eq:17}
    \int_0^1 f(\sigma(t))|\dot\sigma(t)|\,\d t\le 
    \Big(\int_0^1 f^p(\sigma(t))\,\d t\Big)^{1/p}
    \Big(\cE_q(\sigma)\Big)^{1/q}.
  \end{equation}
  If $f$ satisfies
$$
\int_\pgamma f\geq 1\qquad \forall \pgamma\in\Gamma
$$
we obtain that $\int_\sigma f\geq 1$ for all $\sigma\in [\sfj\Gamma]$. We can now integrate w.r.t. $\rrho$
and use \eqref{eq:17} to get
\begin{align}
  \notag &\rrho_*([{\sf j}\Gamma])\leq
  \Big(\int \int_0^1 f^p(\sigma(t))\,\d t\,\d\rrho(\sigma)\Big)^{1/p}
  \Big(\int \cE_q(\sigma)\,\d\rrho(\sigma)\Big)^{1/q}
  \\&=\label{eq:elementary} 
  \Big(\int_X f^p\, h\,\d\mm\Big)^{1/p}
  \Big(\int \cE_q(\sigma)\,\d\rrho(\sigma)\Big)^{1/q}
  \le \|f\|_{p}
    \|h\|_{\infty}^{1/p}
    \Big(\int \cE_q(\sigma)\,\d\rrho(\sigma)\Big)^{1/q}.
  \end{align}
By minimizing with respect to $f$ we obtain that 
$\rrho_*([{\sf j}\Gamma])=0$. 

Conversely, suppose that $\Md(\Gamma)>0$; possibly passing to a smaller set, by the countable
subadditivity of $\Md$ we can assume that $\ell$ is bounded on $\Gamma$: then by Theorem~\ref{tmain1} there exists 
$\ppi\in\cP\bigl(\Curvesnonpara X\bigr)$ with barycenter in $L^q(X,\mm)$ concentrated on $\Gamma$
and therefore the boundedness of $\ell$ allows to apply the final statement of Theorem~\ref{thm:repara}
to obtain $\rrho\in \cP\bigl(\rmC([0,1];X)\bigr)$ with finite $q$-energy, parametric barycenter in $L^\infty(X,\mm)$ and
concentrated on a Souslin subset of $[{\sf j}\Gamma]$.
\end{proof}

\begin{corollary}
  \label{cor:almost2}
  Let $\Gamma\subset\AC^\infty([0,1];X)$ be a Souslin set 
  such that 
  $\rrho_*\big([\Gamma]\big)=0$ for every plan 
  $\rrho\in \cP(\rmC([0,1];X))$ concentrated on $\AC^q([0,1];X)$ and
  with parametric barycenter in $L^\infty(X,\mm)$.
  Then 
  $M(\Gamma)$ is $\Md$-negligible.  
\end{corollary}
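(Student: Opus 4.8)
The plan is to argue by contradiction, using the two existence/reparameterization results already at our disposal: the existence criterion of Theorem~\ref{tmain11} and the final ``parametric-to-$L^\infty$'' statement of Theorem~\ref{thm:repara}. So suppose, contrary to the claim, that $\Md(M(\Gamma))>0$, and aim to manufacture a plan that violates the hypothesis. The first preparatory step is to reduce to uniformly bounded Lipschitz constants. Setting $\Gamma_n:=\Gamma\cap\{\gamma:\Lip(\gamma)\le n\}$ and recalling that $\gamma\mapsto\Lip(\gamma)$ is lower semicontinuous, the set $\{\Lip\le n\}$ is closed, hence each $\Gamma_n$ is Souslin. Since $M(\Gamma_n)\uparrow M(\Gamma)$, Proposition~\ref{prop:prop}(v) gives $\Md(M(\Gamma_n))\uparrow\Md(M(\Gamma))>0$, so $\Md(M(\Gamma_n))>0$ for some $n$; I would then replace $\Gamma$ by this $\Gamma_n$ and assume from now on that $\Lip\le n$ on $\Gamma$ (keeping in mind $[\Gamma_n]\subset[\Gamma]$ for the final step).

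Next I would invoke the analytic machinery. Because $\Gamma$ is Souslin with $\Md(M(\Gamma))>0$, Theorem~\ref{tmain11} produces a plan $\ssigma\in\cP\bigl(\rmC([0,1];X)\bigr)$ concentrated on $\Gamma$ with parametric barycenter in $L^q(X,\mm)$. As $\Gamma\subset\{\Lip\le n\}$, this $\ssigma$ has $\ssigma$-essentially bounded Lipschitz constants and is concentrated on a Souslin subset of $\AC^\infty([0,1];X)$, so the hypotheses of the last statement of Theorem~\ref{thm:repara} are exactly met. Applying it yields $\rrho\in\cP\bigl(\rmC([0,1];X)\bigr)$ with finite $q$-energy (hence, by Definition~\ref{defalphaq1}, concentrated on $\AC^q([0,1];X)$) and with parametric barycenter in $L^\infty(X,\mm)$, concentrated on a Souslin set $S\subset[\Gamma]$.

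Finally I would extract the contradiction. On the one hand, this $\rrho$ meets precisely the standing conditions in the hypothesis of the corollary, so $\rrho_*([\Gamma])=0$. On the other hand, $\rrho$ is concentrated on the Souslin—hence universally measurable—set $S\subset[\Gamma]$; passing to a Borel set $B\subset S$ with $\rrho(S\setminus B)=0$ gives $\rrho(B)=1$ with $B\subset[\Gamma]$, so that $\rrho_*([\Gamma])=1$. This contradiction forces $\Md(M(\Gamma))=0$, as desired.

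I expect the only delicate point to be the bookkeeping of the Souslin and universal-measurability structure rather than any genuine analysis: verifying that $\Gamma_n$ stays Souslin after intersecting with $\{\Lip\le n\}$, that the reparameterized plan lands inside $[\Gamma]$ via $[\Gamma_n]\subset[\Gamma]$, and—most importantly—that ``$\rrho$ concentrated on a Souslin set contained in $[\Gamma]$'' upgrades to $\rrho_*([\Gamma])=1$ through the completion argument. All the substantive content is already packaged in Theorems~\ref{tmain11} and~\ref{thm:repara}; the corollary is essentially a matching of hypotheses closed off by the inner-measure identity.
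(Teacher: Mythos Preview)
Your proof is correct and follows essentially the same approach as the paper: argue by contradiction, reduce to bounded Lipschitz constants, apply Theorem~\ref{tmain11} to produce a plan with $L^q$ parametric barycenter, then upgrade via the last statement of Theorem~\ref{thm:repara} to a plan with $L^\infty$ parametric barycenter concentrated on a Souslin subset of $[\Gamma]$, contradicting the hypothesis through the inner measure. The only cosmetic difference is that the paper appeals to countable subadditivity of $\Md^{1/p}$ (Proposition~\ref{prop:prop}(i)) rather than to continuity along increasing sequences (Proposition~\ref{prop:prop}(v)) for the reduction to bounded $\Lip$, and it leaves the inner-measure bookkeeping implicit.
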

\begin{proof}
  Suppose by contradiction that $\Md(M(\Gamma))>0$; possibly passing to a smaller set, by the countable
  subadditivity of $\Md$ we can assume that 
  $\Lip$ is bounded on $\Gamma$. By Theorem~\ref{tmain11} there exists 
  $\ppi\in \cP\bigl(\rmC([0,1];X)\bigr)$ with parametric barycenter in $L^q(X,\mm)$ concentrated on $\Gamma$.
  The boundedness of $\Lip$  on $\Gamma$ allows to apply the second part of Theorem~\ref{thm:repara}
  to obtain $\rrho\in \cP\bigl(\rmC([0,1];X)\bigr)$ with parametric barycenter in $L^\infty(X,\mm)$, finite
  $q$-energy and concentrated on a Souslin subset of $[\Gamma]$.
\end{proof}

\section{Test plans and their null sets}

In this section we will assume that $(X,\sfd)$ is a complete and
separable metric space and $\mm\in \cM_+(X).$
The following notions 
have already been used in \cite{Ambrosio-Gigli-Savare11} ($q=2$)
and \cite{Ambrosio-Gigli-Savare12} (in connection with the Sobolev
spaces with gradient in $L^p(X,\mm)$, with $q=p'$; see also \cite{AmbrosioDiMarino12} in connection
with the $BV$ theory).

\begin{definition} [$q$-test plans and negligible sets] \label{defalphaq}
Let $\rrho\in\Probabilities{\rmC([0,1];X)}$ and $q\in [1,\infty]$. We say that $\rrho$ is
a $q$-test plan if
\begin{itemize}
\item[(i)] $\rrho$ is concentrated on $\AC^q([0,1];X)$;
\item[(ii)] there exists a constant $C=C(\rrho)>0$ satisfying $(\e_t)_\sharp\rrho\leq C\mm$ for all $t\in [0,1]$.
\end{itemize}
We say that a universally measurable set $\Gamma\subset\rmC([0,1];X)$ is $q$-negligible if 
$\rrho(\Gamma)=0$ for all $q$-test plans $\rrho$.
\end{definition}

Notice that, by definition, $\rmC([0,1];X)\setminus\AC^q([0,1];X)$ is $q$-negligible.
The lack of invariance of these concepts, even under bi-Lipschitz reparameterizations is due to condition (ii), which is imposed at any given time
and with no averaging (and no dependence on speed as well). Since condition (ii) is more restrictive compared
for instance to the notion of democratic test plan of \cite{Lott-Villani} (see Remark~\ref{rem:demo}), this means 
that sets of curves have higher chances of being negligible w.r.t. this notion, as the next elementary example shows. 

We now want to relate null sets according to Definition~\ref{defalphaq} to null
sets in the sense of $p$-modulus.
Notice first that in the definition of $q$-negligible set we might consider only plans $\rrho$ satisfying the stronger condition
\begin{equation}\label{eqn:EL} \esssup \{ \cE_q( \sigma ) \} < \infty 
\end{equation}
because any $q$-test plan can be monotonically be approximated by $q$-test plans satisfying this
condition. Arguing as in the proof of \eqref{eq:elementary}
we easily see that 
\begin{equation}\label{eq:bl2}
\text{$\Gamma\subset\Curvesnonpara X$ ${\rm Mod}_{p,\smm}$-negligible}
\qquad\Longrightarrow\qquad 
\text{${\sfi}^{-1}(\Gamma)$ $q$-negligible.}
\end{equation} 
The following simple example shows that the implication can't be
reversed, namely sets whose images under ${\sfi}^{-1}$ are $q$-negligible 
need not be ${\rm Mod}_{p,\smm}$-null.

\begin{example}
Let $X=\R^2$, $\sfd$ the Euclidean distance, $\mm=\Leb{2}$. The family of parametric segments
$$
\Sigma=\left\{\gamma^x:\ x\in [0,1]\right\} \subset \AC([0,1];\R^2) 
$$
with $\gamma^x_t=(x,t)$ is $q$-negligible for any $q$, but ${\sf i}(\Sigma)$ has $p$-modulus
equal to 1.
\end{example}

In the previous example the implication fails because the trajectories $\gamma^x$ fall, at any given time $t$, into a $\mm$-negligible
set, and actually the same would be true if this concentration property holds at some fixed time. It is tempting to imagine that
the implication is restored if we add to the initial family of curves all their reparameterizations (an operation
that leaves the $p$-modulus invariant). However, since any reparameterization fixes the endpoints, even this
fails. However, in the following, we will see that the implication 
$$
\text{$\Gamma$ $q$-negligible}\qquad\Longrightarrow\qquad {\rm Mod}_{p,\smm}(\sfi(\Gamma))=0
$$
could be restored if we add some structural assumptions on $\Gamma$ (in particular a ``stability'' condition); 
the collections of curves we are mainly interested in are those connected with the theory of Sobolev spaces in \cite{Ambrosio-Gigli-Savare11}, 
\cite{Ambrosio-Gigli-Savare12}, and we will find a new proof of the fact that if we define weak upper gradients according to the two 
notions, the Sobolev spaces are eventually the same.

We now fix some additional notation: for $\J=[a,b]\subset [0,1]$ we define the ``stretching'' map ${\sf s}_\J:\AC([0,1];X)\to\AC([0,1];X)$,
mapping $\gamma$ to $\gamma\circ s_\J$, where $s_\J:[0,1]\to [a,b]$ is the affine map with $s_\J(0)=a$ and $s_\J(1)=b$. Notice that
this map acts also in all the other spaces $\AC^q$, $\AC_0$, $\AC^\infty_c$ of parametric curves we are considering.

\begin{definition}[Stable and invariant sets of curves]\null\ 
\begin{itemize}
\item[(i)] We say that $\Gamma\subset\{\gamma\in\AC([0,1];X):\ \ell(\gamma)>0\}$ is invariant under constant speed reparameterization 
if $\sfk\gamma\in\Gamma$ for all $\gamma\in\Gamma$;
\item[(ii)] We say that $\Gamma\subset\AC([0,1];X)$ is $\sim$-invariant 
if $[\gamma]\subset\Gamma$ for all $\gamma\in\Gamma$; 
\item[(iii)] We say that $\Gamma\subset\AC([0,1];X)$ is stable
if for every $\gamma\in\Gamma$ there exists $\ep\in (0,1/2)$ such that $s_{\J} \gamma\in\Gamma$ whenever $\J=[a,b]\subset [0,1]$ 
and $|a|+|1-b| \leq \ep$.
\end{itemize}
\end{definition}

{
The following theorem provides key connections between $q$-negligibility and ${\rm Mod}_{p,\smm}$-negligibility, both in the nonparametric
sense (statement (i)) and in the parametric case (statement (ii)), for stable sets of curves. 
}

\begin{theorem}\label{teo:popen} Let $\Gamma\subset\AC([0,1];X)$ be a Souslin and stable set of curves.
\begin{itemize}
\item[(i)] If, in addition, $\ell(\gamma)>0$ for all $\gamma\in\Gamma$
  and $\Gamma$ is both $\sim$-invariant and invariant under constant 
speed reparameterization, then $\Gamma$ is $q$-negligible if and only if $ J(\Gamma)$ is ${\rm Mod}_{p,\smm}$-negligible in ${\mathcal M}_+(X)$
(equivalently, $\sfi(\Gamma)$ is ${\rm Mod}_{p,\smm}$-negligible in
$\Curvesnonpara{X}$).
\item[(ii)] If $\Gamma$ is $q$-negligible and
$[\Gamma\cap\AC^\infty([0,1];X)]\subset\Gamma$,
then $M\bigl(\Gamma\cap\AC^\infty([0,1];X)\bigr)$ is ${\rm Mod}_{p, \smm}$-negligible
in ${\mathcal M}_+(X)$. If $\Gamma$ is also $\sim$-invariant then the converse holds, too.
\end{itemize}
\end{theorem}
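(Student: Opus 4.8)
The plan is to isolate a single engine powering every implication: a construction that upgrades a plan with merely averaged (parametric) control into an honest $q$-test plan by exploiting stability. Given $\rrho$ concentrated on $\AC^q([0,1];X)$ with parametric barycenter $h\in L^\infty(X,\mm)$, the key device is a \emph{sliding window}. For $\eps\in(0,1/2)$ and $a\in[0,\eps]$ let $\stretch{[a,a+1-\eps]}{}$ be the restriction--reparameterization introduced before the theorem, and set
\begin{equation*}
\tilde\rrho:=\frac1\eps\int_0^\eps (\stretch{[a,a+1-\eps]}{})_\sharp\,\rrho\,\d a .
\end{equation*}
Each such cut removes total length $\eps$ from the two ends, and the point is that for fixed $t$ the value $s=a+t(1-\eps)$ sweeps, as $a$ runs over $[0,\eps]$, an interval of length $\eps$ with uniform density $1/\eps$; hence $(\e_t)_\sharp\tilde\rrho\le\frac1\eps\int_0^1(\e_s)_\sharp\rrho\,\d s\le\frac{\|h\|_\infty}\eps\mm$ \emph{uniformly in $t$}, with no blow-up at the endpoints. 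Thus $\tilde\rrho$ is a $q$-test plan, and it is concentrated on $\Gamma$ once the curves of $\rrho$ lie in the uniform-stability set $\Gamma_n:=\{\gamma\in\Gamma:\ \stretch{[a,b]}{}\gamma\in\Gamma\ \text{whenever}\ |a|+|1-b|\le1/n\}$ (take $\eps=1/n$). This gives the \textbf{core lemma}: if $\Gamma$ is Souslin, stable and $q$-negligible, then $\rrho(\Gamma)=0$ for every $\rrho$ concentrated on $\AC^q$ with parametric barycenter in $L^\infty$; indeed $\rrho(\Gamma_n)$ equals the mass of the $q$-test plan $\tilde\rrho$ concentrated on $\Gamma$, hence vanishes, and $\Gamma=\bigcup_n\Gamma_n$ by stability (the $\Gamma_n$ being universally measurable by the Borel regularity of the stretching maps).

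For part (i), the implication ``$\sfi(\Gamma)$ $\Md$-null $\Rightarrow\Gamma$ $q$-negligible'' is exactly \eqref{eq:bl2} together with monotonicity of $q$-negligibility under $\sfi^{-1}$. For the converse I would combine the core lemma with Corollary~\ref{cor:almost}: it suffices to show $\rrho_*([\sfj\sfi(\Gamma)])=0$ for every admissible $\rrho$, and since $\sfj\circ\sfi=\sfk$ one has $[\sfj\sfi(\Gamma)]=[\sfk(\Gamma)]$, which lies in $\Gamma$ because $\Gamma$ is $\sim$-invariant and invariant under constant-speed reparameterization; thus $\rrho_*([\sfk\Gamma])\le\rrho(\Gamma)=0$ by the core lemma. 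The equivalence with $J(\Gamma)$ is \eqref{eq:J2}. Part (ii), forward direction, is the same mechanism: the hypothesis $[\Gamma\cap\AC^\infty]\subset\Gamma$ gives $\rrho_*([\Gamma\cap\AC^\infty])\le\rrho(\Gamma)=0$ for all admissible $\rrho$ by the core lemma, and Corollary~\ref{cor:almost2} then delivers that $M(\Gamma\cap\AC^\infty)$ is $\Md$-negligible.

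The converse in (ii) is the most delicate. Writing $\Gamma^\infty:=\Gamma\cap\AC^\infty([0,1];X)$, I would start from a $q$-test plan $\rrho$: since $(\e_t)_\sharp\rrho\le C\mm$ for all $t$, its parametric barycenter lies in $L^\infty$, and after the usual restriction to $\{\cE_q\le n\}$ the measure $\sfi_\sharp(\rrho\restr{\{\ell>0\}})$ is, up to normalization, a plan with barycenter in $L^q(X,\mm)$ by the H\"older estimate behind \eqref{eq:55}. Hence by \eqref{eq:supp1} it annihilates every $\Md$-null subset of $\Curvesnonpara X$, and it is enough to prove that $J(\Gamma\cap\{\ell>0\})$ is $\Md$-null; the constant curves are handled separately, their base points forming the $\mm$-null set detected by $M(\Gamma^\infty)$ and so carrying no $\rrho$-mass since $(\e_0)_\sharp\rrho\le C\mm$. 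Using $\sim$-invariance, and the fact that every $\AC^q$ curve of positive length is $\sim$-equivalent to a Lipschitz one, one gets $\Gamma\cap\{\ell>0\}=[\Gamma^\infty\cap\{\ell>0\}]$, so by reparameterization-invariance of $J$ it suffices to show the inclusion $J(\Gamma^\infty\cap\{\ell>0\})\subseteq J(\Gamma^\infty\cap\AC^\infty_c)$, whose right-hand family is $\Md$-null by \eqref{eq:19} and $M(\Gamma^\infty)$-nullity.

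This inclusion is where both stability and $\sim$-invariance are genuinely needed, and it is the main obstacle. A curve $\sigma\in\Gamma^\infty$ may pause on a set of positive time-measure, so that its constant-speed representative $\sfk\sigma$ need not belong to $\Gamma$; yet $J$ ignores the pause, where $|\dot\sigma|=0$ and the pause point is $\Haus{1}$-negligible. The plan is to \emph{de-pause} $\sigma$: using $\sim$-invariance, reparameterize it (staying in $\Gamma^\infty$) so that the pause is compressed into a short interval adjacent to an endpoint, and then use stability to cut that interval off, again staying in $\Gamma^\infty$. The cut curve has the same $J$-image as $\sigma$ and, being active, has a constant-speed representative in $\Gamma^\infty\cap\AC^\infty_c$, yielding the inclusion. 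The hardest points, on which I expect to spend most effort, are ensuring that the compressed pause fits inside the curve-dependent stability radius (removing it in several small steps if necessary, and controlling the intermediate curves so as never to leave the Souslin set $\Gamma$) together with the attendant measurability bookkeeping. Once the inclusion is secured, monotonicity of $\Md$ gives $\Md(J(\Gamma\cap\{\ell>0\}))=0$, and the reduction above yields $\rrho(\Gamma)=0$ for every $q$-test plan, i.e. $\Gamma$ is $q$-negligible.
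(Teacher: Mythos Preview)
Your sliding-window construction and ``core lemma'' match the paper's argument for the forward implications in (i) and (ii) essentially verbatim (the paper writes $F_\eps^\tau\gamma(t)=\gamma((t+\tau)/(1+\eps))$ and averages over $\tau$, which is your $\stretch{[a,a+1-\eps]}{}$ up to cosmetics), and your invocations of Corollaries~\ref{cor:almost} and~\ref{cor:almost2} are the right bridges.

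The converse of (ii), however, has a genuine gap. Your step~7 proposes to ``de-pause'' a Lipschitz curve $\sigma\in\Gamma^\infty$ by using a $\sim$-reparameterization to push the pause set $\{|\dot\sigma|=0\}$ into a short interval adjacent to an endpoint, then cutting it off via stability. This cannot work: a $\sim$-reparameterization is an \emph{increasing} homeomorphism of $[0,1]$, hence order-preserving, so it cannot relocate an interior pause interval (let alone a dense pause set of positive measure such as a fat Cantor set) to an endpoint. No iteration of small cuts helps, because each cut only removes time from the ends, never from the middle, and after any $\sim$-reparameterization the pause set retains its ``interior'' position. Consequently the inclusion $J(\Gamma^\infty\cap\{\ell>0\})\subseteq J(\Gamma^\infty\cap\AC^\infty_c)$ is not established, and your route to $\Md(J(\Gamma\cap\{\ell>0\}))=0$ collapses.

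The paper avoids this entirely: for the converse of (ii) it never passes through $J$ or the duality. Instead it exploits precisely the observation behind your step~6 --- that every $\gamma\in\Gamma$ is $\sim$-equivalent to a Lipschitz curve --- but with the \emph{specific} reparameterization $\sfs_1$ satisfying $(1+\ell(\gamma))\sfs_1'=1+|\dot\gamma|$. This gives $\eta:=\gamma\circ\sfs_1^{-1}\in\Gamma\cap\AC^\infty$ together with the change-of-variables identity
\[
\int_0^1(1+|\dot\gamma_t|)g(\gamma_t)\,\d t=(1+\ell(\gamma))\int_0^1 g(\eta_s)\,\d s.
\]
Taking $g=f$ where $f\in\cL^p_+(X,\mm)$ witnesses $\Md(M(\Gamma^\infty))=0$ (so $\int_0^1 f(\eta_s)\,\d s=\infty$ for all $\eta\in\Gamma^\infty$), one gets $\int_0^1(1+|\dot\gamma_t|)f(\gamma_t)\,\d t=\infty$ for \emph{every} $\gamma\in\Gamma$. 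A single H\"older estimate then shows this integral is finite $\ppi$-a.e.\ for any $q$-test plan $\ppi$, forcing $\ppi(\Gamma)=0$. Note that stability is not used at all in this direction (the paper records this as a separate remark), so your claim that stability is ``genuinely needed'' for the converse is also mistaken. You already had the right reparameterization implicit in step~6; the fix is to use it directly with the change-of-variables identity rather than detouring through the unworkable step~7.
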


\begin{proof} (i) The proof of the nontrivial implication, from positivity of
${\rm Mod}_{p,\smm}(J(\Gamma))$ to $\Gamma$ being not $q$-negligible
is completely analogous to the proof of (ii), given below, by applying Corollary~\ref{cor:almost} to $\sfi (\Gamma)$ 
in place of  Corollary~\ref{cor:almost2} to $\Gamma\cap\AC^\infty([0,1];X)$ and the same rescaling technique. Since we will only need (ii) in 
the sequel, we only give a detailed proof of (ii).

(ii) Let us prove that the positivity of ${\rm Mod}_{p,\smm} \bigl(M(\Gamma\cap\AC^\infty([0,1];X))\bigr)$ 
implies that $\Gamma$ is not $q$-negligible. Since $\Gamma\cap\AC^\infty([0,1];X)$ is stable, we can assume the existence of
$\ep\in (0,1/2)$ such that $s_{\J} \gamma\in\Gamma$ whenever $\J=[a,b]\subset [0,1]$ 
and $|a|+|1-b| \leq \ep$.
 
By applying Corollary~\ref{cor:almost2} to 
$\Gamma\cap\AC^\infty([0,1];X)$ we obtain the existence of $\rrho\in\cP\bigl(\AC^q([0,1];X)\bigr)$ 
concentrated on a Souslin subset of $[\Gamma\cap\AC^\infty([0,1];X)]$, and then on $\Gamma$,
with $L^\infty$ parametric barycenter, i.e. such that
\begin{equation}\label{eqn:boundinfinit}
\int_0^1 (\e_t)_{\sharp} \rrho \,\d t\leq C \mm \qquad \text{for some }C>0.
\end{equation}

Let's define a family of reparameterization maps $F_{\ep}^{\tau}:\AC^q([0,1];X) \to \AC^q([0,1];X)$: 
\begin{equation}\label{eqn:reparam}
F_{\ep}^{\tau} \gamma (t)= \gamma \Bigl( \frac{t+\tau}{1+\ep} \Bigr) \qquad t\in [0,1],\qquad\forall\gamma \in \AC^q([0,1];X), \; \; \forall \: \tau \in [0,\eps].
\end{equation}
Let us consider now the measure
$$ \rrho_\ep =\frac 1{\ep} \int_0^{\ep} (F_{\ep}^{\tau})_{\sharp} \rrho \, d \tau.$$
We claim that $\rrho_\ep$ is a $q$-plan: it is clear that $\rrho_\ep$ is a probability measure on $\AC^q([0,1];X)$, and so we have to check only the marginals at every time:
 \begin{align*}
 (\e_t)_{\sharp} \rrho_\ep &= \frac 1{\ep} \int_0^{\ep} (\e_t)_{\sharp} \bigl( (F_{\ep}^{\tau})_{\sharp} \rrho  \bigr) \, d \tau =  \frac {1}{\ep} \int_0^{\ep} (\e_{\frac {t+\tau}{1+\ep}})_{\sharp} \rrho  \, d \tau \\ 
 &= \frac {1+\ep}{\ep} \int_{\frac {t}{1+\ep}}^{\frac {t+\ep}{1+\ep}} (\e_s)_{\sharp} \rrho  \, d s  \leq \frac {1+\ep}{\ep} \int_0^1 (\e_s)_{\sharp} \rrho  \, d s \leq C\frac {1+\ep}{\ep} \mm  \qquad \text{ for all }t \in [0,1].
  \end{align*}
Now we reach the absurd if we show that $\rrho_\ep$ is concentrated on $\Gamma$; in order to do so it is sufficient to 
notice that $F_{\ep}^{\tau}=s_\J$ with $\J=\J_\ep^\tau=[ \frac{ \tau}{1+\ep}, \frac{1+\tau}{1+\ep}]$ and $\tau\in [0,\ep]$. 

Now, if we assume also that $[\Gamma] \subset\Gamma$, then we know that for all $\gamma \in \Gamma$ the curve
$\eta:=\gamma \circ \sfs_1^{-1}$ belongs to $\Gamma \cap\AC^\infty([0,1];X)$, where $\sfs_1:[0,1]\to [0,1]$ is the parameterization defined in 
the proof of Proposition~\ref{prop:tilli}. We recall that by definition we have 
$(1+\ell(\gamma))\sfs_1'(t)= 1+| \dot{\gamma}_t |$
for $\Leb{1}$-a.e. $t$; in particular, the change of variables formula gives
\begin{equation}\label{eqn:chvar}
\int_0^1 (1+|\dot{\gamma_t}|)g( \gamma_t) \,\d t = (1+\ell(\gamma))\int_0^1 g( \eta_s) \, \d s \qquad \forall g:X\to [0,\infty]\text{ Borel.}
\end{equation}
We suppose that $M\bigl(\Gamma\cap\AC^\infty([0,1];X)\bigr)$ is ${\rm Mod}_{p, \smm}$-negligible; this gives us $f\in{\cal L}^p_+(X,\mm)$ such that 
\begin{equation}\label{eqn:int11}
 \int_0^1 f(\eta_s) \, \d s = \infty \qquad \forall \eta \in \Gamma \cap \AC^{\infty}([0,1];X). 
\end{equation}
Now given any $q$-plan $\ppi$ we have that
\begin{align}\nonumber \int_\Gamma\int_0^1 (|\dot{\gamma_t}| +1)f(\gamma_t) \, \d t \, \d \ppi(\gamma) & 
\leq \left( \iint_0^1 (|\dot{\gamma_t}| + 1)^q \d t \, \d \ppi(\gamma) \right)^{1/q} \left( \iint_0^1 f^p(\gamma_t) \d t \, \d \ppi(\gamma) \right)^{1/p} \\ \label{eqn:int12}
 & \leq  \left( \Bigl(\int \cE_q \, \d \ppi \Bigr)^{1/q} + 1 \right) \left( C(\ppi) \cdot \int_X f^p \, \d \mm \right)^{1/p} < \infty 
\end{align}
Now, using \eqref{eqn:int11}  and \eqref{eqn:chvar} with $g=f$ give
$\int_0^1 (|\dot{\gamma_t}| +1)f(\gamma_t) \, \d t=\infty$ for all $\gamma\in\Gamma$,
so that \eqref{eqn:int12} gives that $\ppi(\Gamma)=0$. Since $\ppi$ is arbitrary, $\Gamma$ is
$q$-negligible.
\end{proof}

\begin{remark}\label{rem:implic} \upshape 
We note that the proof shows that if $\Gamma$ is $\sim$-invariant and $M\bigl(\Gamma\cap\AC^\infty([0,1];X)\bigr)$ is 
${\rm Mod}_{p, \smm}$-negligible in ${\mathcal M}_+(X)$,  then $\Gamma$ is $q$-negligible, independently of the stability
assumption that we used in the converse implication.
\end{remark}

\section{Weak upper gradients}

As in the previous sections, $(X,\sfd)$ will be a complete and
separable metric space and $\mm\in \cM_+(X).$

Recall that a Borel function $g:X\to [0,\infty]$ is an upper gradient of $f:X\to\R$ if
\begin{equation} \label{eqn:ugin}
|f(\pgamma_{fin})-f(\pgamma_{ini})|\leq\int_\pgamma g
\end{equation}
holds for all $\pgamma\in\Curvesnonpara{X}$.
Here, the curvilinear integral $\int_\pgamma g$ is given by $\int_J g(\gamma_t)|\dot\gamma_t|\,\d t$, where $\gamma:J \to X$ is any
parameterization of the curve $\pgamma$ (i.e., $\pgamma=\sfi\gamma$, and one can canonically take $\gamma=\sfj\pgamma$).
It follows from Proposition~\ref{prop:tillibis} that the upper gradient property can be equivalently written in the form
$$
|f(\pgamma_{fin})-f(\pgamma_{ini})|\leq\int_X g\,\d J\pgamma.
$$
Now we introduce two different notions of Sobolev function and a corresponding notion of $p$-weak gradient; the first one was first given in \cite{Shanmugalingam00} while the second one in \cite{Ambrosio-Gigli-Savare11} for $p=2$ and in \cite{Ambrosio-Gigli-Savare12} for general exponent. When discussing the corresponding
notions of (minimal) weak gradient we will follow the terminology of  \cite{Ambrosio-Gigli-Savare12}.

\begin{definition}[$N^{1,p}$ and $p$-upper gradient] Let $f$ be a $\mm$-measurable and $p$-integrable function on $X$.
We say that $f$ belongs to the space $N^{1,p}(X, \sfd, \mm) $ if there exists  
$g\in {\cal L}_+^p(X,\mm)$ such that \eqref{eqn:ugin} is satisfied for $\Md$-a.e. curve $\pgamma$.
\end{definition}

Functions in $N^{1,p}$ have the important Beppo-Levi property of being absolutely continuous along $\Md$-a.e. curve $\pgamma$  
(more precisely, this means $f\circ\sfj\pgamma\in\AC([0,1])$), see
\cite[Proposition~3.1]{Shanmugalingam00}. Because of the implication \eqref{eq:bl2}, functions in $N^{1,p}(X,\sfd,\mm)$ belong to the Sobolev space defined below
(see \cite{Ambrosio-Gigli-Savare11}, \cite{Ambrosio-Gigli-Savare12}) where \eqref{eqn:ugin} is required for $q$-a.e. curve $\gamma$.

\begin{definition}[$W^{1,p}$ and $p$-weak upper gradient] Let $f$ be a $\mm$-measurable and $p$-integrable function on $X$.
We say that 
 $f$ belongs to the space $W^{1,p}(X, \sfd, \mm) $ if there exists $g\in {\cal L}_+^p(X,\mm)$ such that
$$
|f(\gamma_1)-f(\gamma_0)|\leq\int_0^1g(\gamma_t)|\dot\gamma_t|\,\d t
$$ 
is satisfied for $q$-a.e. curve $\gamma\in\AC^q([0,1];X)$.
\end{definition}

We remark that there is an important difference between the two definitions, namely the first one is a priori not invariant if we change the function 
$f$ on a $\mm$-negligible set, while the second one has this kind of invariance, because for any $q$-test plan $\rrho$, any $\mm$-negligible Borel
set $N$ and any $t\in [0,1]$ the set $\{\gamma\;:\;\gamma_t\in N\}$ is $\rrho$-negligible. Associated to these two notions are the minimal $p$-upper gradient
and the minimal $p$-weak upper gradient, both uniquely determined up to $\mm$-negligible sets 
(for a more detailed discussion, see \cite{Ambrosio-Gigli-Savare12, Shanmugalingam00}).

As an application of Theorem~\ref{teo:popen}, we show that these two notions are essentially equivalent modulo the choice
of a representative in the equivalence class: more precisely, for any $f\in  W^{1,p}(X,\sfd,\mm)$
there exists a $\mm$-measurable representative $\tilde f$ of $f$ which belongs to $N^{1,p}(X,\sfd,\mm)$.
This result is not new, because in \cite{Ambrosio-Gigli-Savare11} and \cite{Ambrosio-Gigli-Savare12} the equivalence has already
been shown. On the other hand, the proof of the equivalence in \cite{Ambrosio-Gigli-Savare11} and \cite{Ambrosio-Gigli-Savare12} is by no means elementary, it passes through
the use of tools from the theory of gradient flows and optimal transport theory and it provides the equivalence with another relevant notion of ``relaxed'' gradient 
based on the approximation through Lipschitz functions.
We provide a totally different proof, using the results proved in this paper about negligibility of sets of curves.

In the following theorem we provide, first,  existence of a ``good representative'' of $f$. Notice that the standard
theory of Sobolev spaces provides existence of this representative via approximation with Lipschitz functions.

\begin{theorem} [Good representative]\label{lem:goodcont} 
  Let $f:X \to \R$ be a Borel function and let us set
  $$
     \Gamma= \bigl\{ \gamma \in\AC^\infty([0,1];X) \; : \; f \circ \gamma \text{ has
     a continuous representative $f_\gamma:[0,1]\to\R$}\bigr\}.
  $$
  If $\Md\bigl(M(\AC^\infty([0,1];X)\setminus \Gamma)\bigr)=0$ there exists a $\mm$-measurable representative $\tilde{f}:X\to\R$ of $f$
  satisfying
   \begin{equation}\label{eq:Mneg}
   \Md \bigl( M (\{\gamma\in\Gamma\; : \; \tilde{f}\circ\gamma\not\equiv f_\gamma \} ) \bigr) = 0.
   \end{equation}
  In particular 
  \begin{itemize}
   \item[(i)] for $q$-a.e. curve $\gamma$ there holds $\tilde{f} \circ \gamma \equiv f_{\gamma}$;
 \item[(ii)] for $\Md$-a.e. curve $\pgamma$ there holds $\tilde{f} \circ \sfj \pgamma \equiv f_{\sfj \pgamma}$.
\end{itemize}
\end{theorem}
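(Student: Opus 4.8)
The plan is to reduce \eqref{eq:Mneg} to a statement about good plans and then to build $\tilde f$ out of the continuous representatives $f_\gamma$ themselves. Since the exceptional set in \eqref{eq:Mneg} is contained in $\AC^\infty([0,1];X)$, Corollary~\ref{cor:almost2} shows that it suffices to prove $\rrho_*\bigl([\{\gamma\in\Gamma:\tilde f\circ\gamma\not\equiv f_\gamma\}]\bigr)=0$ for every $\rrho\in\cP\bigl(\rmC([0,1];X)\bigr)$ concentrated on $\AC^q([0,1];X)$ with parametric barycenter in $L^\infty(X,\mm)$; the passage between the $M$- and $J$-pushforwards that this requires is furnished by \eqref{eq:19} and Proposition~\ref{prop:prop}(vii), and the relevant set is seen to be Souslin once one checks, by a monotone class argument, that $\gamma\mapsto f_\gamma$ is Borel on $\Gamma$. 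The hypothesis $\Md\bigl(M(\AC^\infty([0,1];X)\setminus\Gamma)\bigr)=0$ guarantees, arguing exactly as for \eqref{eq:elementary}, that every such $\rrho$ is concentrated on curves whose constant-speed reparameterization lies in $\Gamma$, so that $f_\gamma(t)=f(\gamma_t)$ for $\Leb1$-a.e.\ $t$ and $\rrho$-a.e.\ $\gamma$.

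To construct $\tilde f$ I would set $\tilde f(x):=f_\gamma(t)$ whenever $\gamma_t=x$ for some $\gamma\in\Gamma$, and $\tilde f(x):=f(x)$ at points not met by any such curve. The delicate point is the consistency of this definition: if $\gamma^1_{t_1}=\gamma^2_{t_2}=x$ one needs $f_{\gamma^1}(t_1)=f_{\gamma^2}(t_2)$. This is forced by a gluing principle—concatenating the two curves at $x$ produces a curve along which $f$ still admits a continuous representative (for $\Md$-a.e.\ such concatenation), and continuity of that representative at the junction identifies the two values. I would encode this by showing that the set $Z$ of points carrying an ambiguous value is met, at some time, only by a $\Md$-negligible family of curves, so that $\tilde f$ is unambiguously defined and, by its very construction, continuous along $\Md$-a.e.\ curve (the value equals the continuous representative $f_\gamma(t)$, which is continuous in $t$).

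That $\tilde f$ is a representative of $f$, i.e.\ $\mm(\{\tilde f\neq f\})=0$, I would obtain from a disintegration argument: fixing a good plan $\rrho$ with parametric barycenter $h$, the occupation measure $\int\bigl(\int_0^1\delta_{(\gamma_t,f_\gamma(t))}\,\d t\bigr)\,\d\rrho(\gamma)$ on $X\times\R$ has $X$-marginal $h\mm$ and, because $f_\gamma(t)=f(\gamma_t)$ for $\Leb1$-a.e.\ $t$, disintegrates as $\delta_{f(x)}$ for $h\mm$-a.e.\ $x$; running over a countable family of plans whose barycenters have supremum positive $\mm$-a.e.\ on the curve-reachable part of $X$ yields $\tilde f=f$ $\mm$-a.e. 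Once $\tilde f$ is known to coincide with the continuous representative along $\Md$-a.e.\ curve, both (i) and (ii) follow formally: (ii) is \eqref{eq:Mneg} transported to $\Curvesnonpara X$ through $\sfi$, $\sfj$ and the identities \eqref{eq:J2} and \eqref{eq:19}, while (i) follows from \eqref{eq:bl2}, since the $\Md$-negligibility of the bad nonparametric curves makes their $\sfi$-preimage $q$-negligible, and a general $q$-a.e.\ curve is reduced to its constant-speed reparameterization $\sfk\gamma\in\Lipc X$.

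The main obstacle is exactly the consistency/gluing step, namely proving that the ambiguity set $Z$ is invisible to $\Md$-a.e.\ curve. A bare $\mm$-representative of $f$ need not be continuous along a single curve, so the a.e.-in-$t$ identity $\tilde f(\gamma_t)=f_\gamma(t)$ cannot by itself be upgraded to the pointwise identity $\tilde f\circ\gamma\equiv f_\gamma$: a fixed exceptional time is invisible to the time-averaged reparameterizations $F_\eps^\tau$ of \eqref{eqn:reparam}. The pointwise equality must therefore be produced by the definition of $\tilde f$ through the continuous representatives, and the whole weight of the argument falls on controlling, by means of the stability of $\Gamma$ together with the duality results of Part~I (Theorem~\ref{tmain11} and Theorem~\ref{thm:repara}), the family of curves along which the concatenation fails to admit a continuous representative.
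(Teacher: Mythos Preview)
Your overall architecture matches the paper's: define $\tilde f(x)$ through the continuous representatives $f_\gamma(t)$ at points $x=\gamma_t$, prove consistency via concatenation, and then deduce (i) and (ii) from \eqref{eq:Mneg}. But the paper handles the two places you flag as obstacles in a much sharper way, and in the consistency step you are genuinely missing the key idea.

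\textbf{Consistency.} You propose to argue that the ambiguity set $Z$ is met only by a $\Md$-negligible family of curves. This is both awkward (if $Z\neq\emptyset$ your $\tilde f$ is not yet defined there, so you must make an arbitrary choice and then show it does not matter) and, as you admit, it is where ``the whole weight of the argument falls'' without a clear mechanism. The paper bypasses this entirely. By Proposition~\ref{prop:prop}(ii), the hypothesis $\Md\bigl(M(\tilde\Gamma)\bigr)=0$ provides a function $h\in\Ldp$ with $\int_0^1 h\circ\sigma=\infty$ for \emph{every} $\sigma\in\tilde\Gamma$. One then restricts the construction to the smaller set
\[
\Gamma_g:=\Bigl\{\eta\in\Gamma:\int_0^1 h\circ\eta<\infty\Bigr\},
\]
which still satisfies $\Md\bigl(M(\AC^\infty([0,1];X)\setminus\Gamma_g)\bigr)=0$. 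Now consistency is \emph{deterministic}: if $\eta_1,\eta_2\in\Gamma_g$ meet at $x$ with $f_{\eta_1}(t_1)\neq f_{\eta_2}(t_2)$, the concatenation $\eta_3$ has no continuous representative for $f\circ\eta_3$, hence $\eta_3\in\tilde\Gamma$ and $\int_0^1 h\circ\eta_3=\infty$; but this integral is controlled by $\int_0^1 h\circ\eta_1+\int_0^1 h\circ\eta_2<\infty$, a contradiction. So $Z=\emptyset$ on the nose, and no measure-theoretic control of $Z$ is needed. This use of the witness function $h$ is the missing ingredient in your proposal.

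\textbf{The identity $\tilde f=f$ $\mm$-a.e.} Your plan via disintegration of occupation measures and a countable family of plans with barycenters covering $X$ is heavier than necessary (and the existence of such a covering family is not obvious in general). The paper uses constant curves: $\gamma^x\equiv x$ lies in $\Gamma$ with $f_{\gamma^x}\equiv f(x)$, so $F:=\{\tilde f\neq f\}$ satisfies $M(\{\gamma^x:x\in F\})=\{\delta_x:x\in F\}$, which is $\Md$-negligible by \eqref{eq:Mneg}. Hence there is $g\in\Ldp$ with $g=\infty$ on $F$, forcing $\mm(F)=0$.
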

\begin{proof} Let us set $\tilde\Gamma:=\AC^{\infty}([0,1];X)\setminus \Gamma$, so that our assumption reads  $\Md(M(\tilde\Gamma))=0$. 
Notice first that the (ii) makes sense because $f_{\sfj\pgamma}$ exists for
$\Md$-a.e. curve $\pgamma$ thanks to \eqref{eq:19} and
$\Md( M (\tilde \Gamma\cap\AC^\infty_c([0,1];X)) )=0$
(also, constant curves are all contained in $\Gamma$). 
Also, (i) makes sense thanks to Remark~\ref{rem:implic} and to the fact that the defining property of $\Gamma$ is $\sim$-invariant.

\smallskip
\noindent Step 1. (Construction of a good set $\Gamma_g$ of curves).
Since we have $\Md(M(\tilde\Gamma))=0$, there exists $h\in {\cal L}_+^p(X,\mm)$ 
such that $\int_0^1 h \circ \sigma = \infty$ for every $\sigma \in \tilde{\Gamma}$. 
Starting from $\Gamma$ and $h$, we can define the set $\Gamma_g= \bigl\{\eta\in\Gamma\;:\; \int_0^1 h\circ \eta <\infty\bigr\} $ of ``good'' curves, satisfying the following three conditions:
\begin{itemize}
\item[(a)] $f\circ\eta$ has a continuous representative for all $\eta\in\Gamma_g$;
\item[(b)] $\int_0^1 h \circ \eta <\infty$ for all $\eta\in\Gamma_g$;
\item[(c)] $M\bigl(\AC^\infty([0,1];X)\setminus\Gamma_g\bigr)$ is $\Md$-negligible.
\end{itemize}
Indeed, properties (a) and (b) follow easily by definition, while (c) follows by the inclusion
$$
M\bigl(\AC^\infty([0,1];X)\setminus\Gamma_g\bigr)\subset
M\bigl(\AC^\infty([0,1];X)\setminus\Gamma\bigr)\cup\bigl\{\mu:\ \int_X h\,\d\mu=\infty\bigr\}.
$$

\noindent Step 2. (Construction of $\tilde f$).
 For every point $x \in X$ we consider the set of pairs good curves-times that pass through $x$ at time $t$:
$$\Theta_x = \{ (\eta,t)\in\Gamma_g\times [0,1] \; : \; \eta(t)=x \},$$
and, thanks to property (a) of $\Gamma_g$, we can partition this set according to the value of the continuous representative $f_\eta$ at $t$:
$$\Theta_x = \bigcup_{r \in \R} \Theta_x^r  \qquad\text{with}\qquad \Theta_x^r= \{ (\eta,t) \in \Theta_x \; : \; f_\eta (t)=r \}. $$
Now, the key point is that for every $x\in X$ there exists at most one $r$ such $\Theta_x^r$ is not empty.
Indeed, suppose that $r_1\neq r_2$ and that there exist $(\eta_1,t_1)\in\Theta_x^{r_1}$, 
$(\eta_2,t_2)\in\Theta_x^{r_2}$, so that $r_1=f_{\eta_1}(t_1) \neq f_{\eta_2}(t_2)=r_2$;
since $\eta_1,\, \eta_2 \in \Gamma_g$, property (b) of $\Gamma_g$ gives
\begin{equation}\label{eq:starstar}
\int_0^1 h\circ \eta_1 \,\d t + \int_0^1 h \circ \eta_2  \,\d t< \infty.
\end{equation}
Suppose to fix the ideas that $t_1>0$ and $t_2<1$ (otherwise we reverse time for one curve, or both, in the following argument). 
Now we create a new curve $\eta_3\in\AC^{\infty}([0,1];X)$ by concatenation:
$$
\eta_3(s):= \begin{cases} \eta_1(2st_1) & \text{ if }s\in [0,1/2], \\ \eta_2(1-2(1-s)(1-t_2)) & \text{ if }s\in [1/2,1].  
\end{cases}$$
This curve is clearly absolutely continuous and it follows first $\eta_1$ for half of the time and then it follows $\eta_2$; it is clear
that, since $f\circ\eta_3$ coincides $\Leb{1}$-a.e. in $(0,1)$ with the function
$$
a(s):=\begin{cases} f_{\eta_1}(2st_1) & \text{ if }s\in [0,1/2], \\ f_{\eta_2}(1-2(1-s)(1-t_2)) & \text{ if }s\in [1/2,1]  \end{cases}
$$
which has a jump discontinuity at $s=1/2$, $f\circ\eta_3$ has no continuous representative. It follows that $\eta_3$ belongs to $\tilde\Gamma$ and
therefore $\int_0^1 h \circ \eta_3=\infty$. But, since 
$$ \frac 1{2t_1} \int_0^1 h \circ \eta_1 \, \d t + \frac 1{2(1-t_2)}\int_0^1 h \circ \eta_2\,\d t \geq \int_0^1 h \circ \eta_3 \,\d t$$
we get a contradiction with \eqref{eq:starstar}.

Now we define
$$ \tilde{f} (x) := \begin{cases} f_\eta(t) & \text{if $(\eta,t)\in\Theta_x$ for some $\eta\in\Gamma_g$, $t\in [0,1]$}  \\ f(x) & \text{ otherwise.} \end{cases} $$
By construction, $\tilde{f}(\eta(t))=f_\eta(t)$ for all $t\in [0,1]$ and $\eta\in\Gamma_g$, so that property (c) of $\Gamma_g$ shows \eqref{eq:Mneg}.
Using Remark~\ref{rem:implic} and the fact that $\{ \gamma\in\AC([0,1];X) \; : \; \tilde f\circ\gamma \equiv f_{\gamma} \}$ is clearly $\sim$-invariant,
we obtain (i) from \eqref{eq:Mneg}. Moreover, from \eqref{eq:Mneg} we get in particular that
\begin{equation}\label{eq:toulouse}
\Md \bigl( M (\{\gamma\in\Gamma\cap\Lipc{X}\; : \; \tilde{f}\circ\gamma\not\equiv f_\gamma \} ) \bigr) = 0.
\end{equation}
Recalling \eqref{eq:19} and the fact that $\sfj$ is a Borel isomorphism, we can rewrite \eqref{eq:toulouse} as
$$ \Md \bigl( J (\{\pgamma\in\Curvesnonpara{X}\; : \; \tilde{f} \circ \sfj \pgamma \not\equiv f_{\sfj\pgamma} \} )\bigr) =0, $$ 
and so we proved also (ii). 

\smallskip
\noindent Step 3. (The set $F:=\{f\neq\tilde f\}$ is $\mm$-negligible.) Let $\gamma^x$ be the curve identically equal $x$, that is $\gamma_t^x =x$ for all $t \in [0,1]$. It is clear that $\gamma^x$ belongs to $\Gamma$ for every $x \in X$: in particular $f_{\gamma^x}(t)= f(x)$ for every $t \in [0,1]$. The basic observation is that if we consider the set $\tilde\Gamma_c$ of constant curves $\gamma^x$ satisfying $\tilde{f} \circ \gamma^x \not\equiv f_{\gamma^x}$, then $f(x)\neq\tilde f(x)$ for every such curve, hence $\tilde\Gamma_c=\{ \gamma^x \; : \; x \in F\}$. In particular we have that $M(\tilde\Gamma_c)=\{ \delta_x \; : \; x\in F \}$.
Now, from \eqref{eq:Mneg}, we know that $\Md(M(\tilde\Gamma_c))=0$; this provides the existence of $g \in {\cal L}_+^p(X, \mm)$ 
such that $g(x)=\infty$ for every $x\in F$, and so we get that $F$ is contained in a $\mm$-negligible set.
\end{proof}
The following simple example shows that, in Theorem~\ref{lem:goodcont}, the ``nonparametric'' assumption that
$J(\AC([0,1];X)\setminus\Gamma)$ is $\Md$-negligible is not sufficient to 
conclude that $\tilde{f}=f$ $\mm$-a.e. in $X$. 

\begin{example} {\rm Let $X=[0,1]$, $\sfd$ the Euclidean distance, $\mm=\Leb{1}+\delta_{1/2}$, $p\in [1,\infty)$. The function $f$ identically 
equal to $0$ on $X\setminus\{1/2\}$ and equal to 1 at $x=1/2$ has a continuous (actually, identically equal to 0) representative $f_{\sfj\pgamma}$
for $\Md$-a.e. curve $\pgamma$, but any function $\tilde f$ such that $\tilde f\circ{\sfj\pgamma}\equiv f_{\sfj\pgamma}$ for $\Md$-a.e. $\pgamma$ should
be equal to $0$ also at $x=1/2$, so that $\mm(\{f\neq\tilde f\})=1$.
}\end{example}

Now, we are going to apply Theorem~\ref{lem:goodcont} to the problem of equivalence of Sobolev spaces. We begin with a few
preliminary results and definitions.

Let $f:X\to \R$, $g:X\to [0,\infty]$ be Borel functions. We 
consider the sets
\begin{equation}
  \label{eq:18}
  \cI(g):=\Big\{\gamma\in \AC([0,1];X):\int_\gamma g<\infty\Big\},
\end{equation}
and 
\begin{equation}
  \label{eq:16bis}
  \cB(f,g):=\Big\{\gamma\in \cI(g):
  f\circ\gamma\in W^{1,1}(0,1),\quad
  |\frac\d{\d t}(f\circ\gamma)|\le |\dot\gamma|g\circ\gamma
  \text{ $\Leb 1$-a.e.\
    in $(0,1)$}\Big\}.
\end{equation}

We will need the following simple measure theoretic lemma, which says that integration in one variable maps Borel functions to Borel functions.
Its proof is an elementary consequence of a monotone class argument (see for instance \cite[Theorem~2.12.9(iii)]{Bogachev}) and of the fact that the statement is true for $F$ bounded  and continuous.

\begin{lemma}\label{lem:souslinac}
  Let $(Y,\sfd_Y)$ be a metric space and let $F:[0,1]\times Y \to [0,\infty]$ be Borel. Then the function
  $\cII_F:Y\to [0,\infty]$ defined by $y\mapsto\int_0^1 F(t, y) \, \d t $ is a Borel function.
\end{lemma}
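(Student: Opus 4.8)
The plan is to run a functional monotone class argument, exactly as suggested by the reference to \cite[Theorem~2.12.9(iii)]{Bogachev}, first establishing the statement for bounded $F$ and then removing the boundedness by truncation. Let $\mathcal H$ denote the collection of all bounded Borel functions $F:[0,1]\times Y\to\R$ for which $\cII_F(y)=\int_0^1 F(t,y)\,\d t$ is Borel in $y$. First I would check that $\mathcal H$ contains every bounded continuous $F$: if $y_n\to y$ in $Y$, then $F(t,y_n)\to F(t,y)$ for each $t\in[0,1]$ by joint continuity, and the uniform bound lets us apply dominated convergence, so $\cII_F(y_n)\to\cII_F(y)$. Thus $\cII_F$ is in fact continuous, hence Borel, and the bounded continuous functions all lie in $\mathcal H$.

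Next I would verify that $\mathcal H$ is a vector space (immediate from linearity of the integral), that it contains the constants, and that it is closed under uniformly bounded monotone pointwise limits: if $F_n\in\mathcal H$, $|F_n|\le M$ and $F_n\to F$ pointwise, then dominated convergence gives $\cII_{F_n}(y)\to\cII_F(y)$ for every $y\in Y$, and a pointwise limit of Borel functions is Borel, so $F\in\mathcal H$. Since $[0,1]\times Y$ is a metric space, its Baire and Borel $\sigma$-algebras coincide, so the bounded continuous functions generate $\BorelSets{[0,1]\times Y}$. The functional monotone class theorem then yields that $\mathcal H$ contains every bounded Borel function on $[0,1]\times Y$.

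Finally, for an arbitrary Borel $F:[0,1]\times Y\to[0,\infty]$ I would set $F_n:=\min(F,n)$, which is bounded and Borel, so that each $\cII_{F_n}$ is Borel by the previous step. Since $F_n\uparrow F$ pointwise, the monotone convergence theorem gives $\cII_{F_n}(y)\uparrow\cII_F(y)$ for every $y\in Y$, and the increasing limit $\cII_F=\sup_n\cII_{F_n}$ of Borel functions is Borel; hence $\cII_F$ is Borel, as claimed. There is no serious obstacle here, consistent with the assertion that the lemma is elementary; the only point requiring a little care is the generation of the full product Borel $\sigma$-algebra by bounded continuous functions, which is precisely where the metric structure of $Y$ (and hence of $[0,1]\times Y$) enters, and once this is granted the argument is entirely routine.
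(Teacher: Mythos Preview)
Your proposal is correct and follows precisely the route indicated in the paper: the paper itself does not spell out the details but simply points to the monotone class theorem of \cite[Theorem~2.12.9(iii)]{Bogachev} together with the observation that the statement is clear for bounded continuous $F$, and you have carried out exactly this program (continuous case, monotone class extension to bounded Borel $F$, truncation for general nonnegative $F$). The only cosmetic remark is that in the monotone class step you in fact verify stability under arbitrary equibounded pointwise limits via dominated convergence, which is stronger than needed but of course sufficient.
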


\begin{lemma}\label{lem:allgood}
  Let $f:X\to \R$, $g:X\to [0,\infty]$ be Borel functions. 
  Then $\cI(g)\setminus \cB(f,g)$
  is a Borel set, stable and $\sim$-invariant.
\end{lemma}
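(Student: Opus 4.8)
The plan is to reduce the analytically delicate membership condition in $\cB(f,g)$ to a statement about pairs of times, and then treat Borel measurability, stability and $\sim$-invariance from that common reformulation. For $\gamma\in\cI(g)$ set $w_\gamma(r):=|\dot\gamma_r|\,g(\gamma_r)\in L^1(0,1)$ and $W_\gamma(t):=\int_0^t w_\gamma\,\d r$, and write $u:=f\circ\gamma$. I claim that, for $\gamma\in\cI(g)$, one has $\gamma\in\cB(f,g)$ if and only if the criterion
$$
|u(t)-u(s)|\le |W_\gamma(t)-W_\gamma(s)|\qquad\text{for $\Leb2$-a.e. }(s,t)\in(0,1)^2
$$
holds; I will call this criterion $(\star)$. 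Indeed, if $u$ admits an absolutely continuous representative $\bar u$ with $|\bar u'|\le w_\gamma$ a.e., then $(\star)$ is immediate. Conversely, picking a full-measure set $G$ of times $s$ for which $(\star)$ holds for a.e.\ $t$, and letting $t\to s$ along $G$, the continuity of $W_\gamma$ yields $|u(s)-u(s')|\le|W_\gamma(s)-W_\gamma(s')|$ for $s,s'\in G$; hence $u|_G$ extends to an absolutely continuous $\bar u$ with $|\bar u'|\le w_\gamma$ a.e.\ and $\bar u=u$ a.e., which is exactly membership in $\cB(f,g)$.

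To prove that $\cI(g)\setminus\cB(f,g)$ is Borel I would first note $\cI(g)=\{\gamma:\int_X g\,\d J\gamma<\infty\}$ is Borel, since $J$ is Borel (Lemma~\ref{lem:vivaBogachev}) and $\mu\mapsto\int_X g\,\d\mu$ is Borel for Borel $g\ge0$. By the criterion $(\star)$, $\cB(f,g)=\cI(g)\cap\{\gamma:\Phi(\gamma)=0\}$ with
$$
\Phi(\gamma):=\int_0^1\!\!\int_0^1\mathbbm 1\Big[\,|f(\gamma_t)-f(\gamma_s)|>|W_\gamma(t)-W_\gamma(s)|\,\Big]\,\d s\,\d t .
$$
Here $(t,\gamma)\mapsto f(\gamma_t)$ is Borel because evaluation is continuous and $f$ is Borel, while for $W_\gamma$ I would bypass the metric derivative by introducing the jointly Borel function
$$
V(r,\gamma):=\Big(\limsup_{h\downarrow 0,\ h\in\Q}\frac{\sfd(\gamma_{r+h},\gamma_r)}{h}\Big)\,g(\gamma_r),
\qquad
W_\gamma(t)=\int_0^1\mathbbm 1_{[0,t]}(r)\,V(r,\gamma)\,\d r ,
$$
noting that $V(\cdot,\gamma)=w_\gamma$ for $\Leb1$-a.e.\ $r$, so the integral is unchanged. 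Lemma~\ref{lem:souslinac}, applied with $Y=[0,1]\times\AC([0,1];X)$, gives that $(t,\gamma)\mapsto W_\gamma(t)$ is Borel; applying Lemma~\ref{lem:souslinac} once more to integrate out $s$ and then $t$ shows $\Phi$ is Borel, whence $\cI(g)\setminus\cB(f,g)$ is Borel.

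For stability, the affine change of variables in $s_\J\gamma(t)=\gamma(a+t(b-a))$ gives $w_{s_\J\gamma}(t)=(b-a)\,w_\gamma(a+t(b-a))$, so $\int_{s_\J\gamma}g=\int_a^b w_\gamma\le\int_\gamma g<\infty$ and hence $s_\J\gamma\in\cI(g)$ for every $\J\subseteq[0,1]$. Moreover $W_{s_\J\gamma}(t)=W_\gamma(a+t(b-a))-W_\gamma(a)$ and $f\circ s_\J\gamma(t)=u(a+t(b-a))$, so the substitution $(\sigma,\tau)=(a+s(b-a),a+t(b-a))$ shows, via $(\star)$, that $s_\J\gamma\in\cB(f,g)$ if and only if the bad set $B_\gamma:=\{(\sigma,\tau)\in(0,1)^2:|u(\tau)-u(\sigma)|>|W_\gamma(\tau)-W_\gamma(\sigma)|\}$ is $\Leb2$-null inside $(a,b)^2$. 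If $\gamma\in\cI(g)\setminus\cB(f,g)$ then $\Leb2(B_\gamma)>0$, and since $B_\gamma\cap(\ep,1-\ep)^2\uparrow B_\gamma$ as $\ep\downarrow0$, there is $\ep\in(0,1/2)$ with $\Leb2(B_\gamma\cap(\ep,1-\ep)^2)>0$. Every $\J=[a,b]$ with $|a|+|1-b|\le\ep$ satisfies $(a,b)\supseteq(\ep,1-\ep)$, so $\Leb2(B_\gamma\cap(a,b)^2)>0$ and $s_\J\gamma\notin\cB(f,g)$, giving $s_\J\gamma\in\cI(g)\setminus\cB(f,g)$.

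Finally, for $\sim$-invariance the identity $J\gamma=J\tilde\gamma$ for $\gamma\sim\tilde\gamma$ (recorded after the definition of $J$) gives $\int_\gamma g=\int_{\tilde\gamma}g$, so $\cI(g)$ is $\sim$-invariant. To see $\cB(f,g)$ is $\sim$-invariant, write $\tilde\gamma=\gamma\circ\psi$ where $\psi$ is increasing and both $\psi$ and $\psi^{-1}=\varphi$ are absolutely continuous. Then $f\circ\tilde\gamma=u\circ\psi$ is absolutely continuous (an AC function composed with a monotone AC one) with $(u\circ\psi)'=(u'\circ\psi)\psi'$ a.e., while the chain rule for the metric derivative (see \cite{Ambrosio-Tilli}) gives $|\dot{\tilde\gamma}_t|=|\dot\gamma_{\psi(t)}|\psi'(t)$; hence $w_{\tilde\gamma}=(w_\gamma\circ\psi)\psi'$. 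The inequality $|(u\circ\psi)'|\le w_{\tilde\gamma}$ a.e.\ then follows from $|u'|\le w_\gamma$ a.e.\ together with the fact that $\{t:\psi(t)\in N\}=\varphi(N)$ is $\Leb1$-null whenever $N$ is, by Luzin's property $(N)$ for the absolutely continuous map $\varphi$; by symmetry $\gamma\in\cB(f,g)\Leftrightarrow\tilde\gamma\in\cB(f,g)$, so $\cI(g)\setminus\cB(f,g)$ is $\sim$-invariant. I expect the Borel-measurability step to be the main obstacle: both the rigorous proof of the equivalence $(\star)$ and, above all, the joint Borel measurability of $(t,\gamma)\mapsto W_\gamma(t)$ despite the metric speed being only an a.e.-defined pointwise limit, which is what forces the substitution of $V(r,\gamma)$ for $|\dot\gamma_r|$ and the iterated use of Lemma~\ref{lem:souslinac}.
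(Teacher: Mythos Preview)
Your argument is correct, and it follows a genuinely different route from the paper's.

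The paper works throughout with the \emph{weak} (distributional) formulation of $\cB(f,g)$: after introducing the auxiliary Borel set ${\sf L}=\{\gamma:\int_0^1|f\circ\gamma|<\infty\}$, it characterizes $\gamma\in\cB(f,g)$ by $\gamma\in\cI(g)\cap{\sf L}$ together with
\[
\Bigl|\int_0^1\phi'(t)\,f(\gamma_t)\,\d t\Bigr|\le\int_0^1|\phi(t)|\,g(\gamma_t)|\dot\gamma_t|\,\d t\qquad\text{for all }\phi\in W=\{\phi\in\AC([0,1]):\phi(0)=\phi(1)=0\}.
\]
Borel measurability is then obtained by restricting $\phi$ to a countable $\rmC^1$-dense family in $\rmC^1_c(0,1)$, and $\sim$-invariance by the change of variables $\phi\mapsto\phi\circ\sfs$ (which preserves the class $W$). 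Stability is proved by a short contradiction argument: if $\sfs_{[a_n,b_n]}\gamma\in\cB(f,g)$ along a sequence with $a_n\downarrow0$, $b_n\uparrow1$, the derivative bound on each $(a_n,b_n)$ and $w_\gamma\in L^1(0,1)$ force $\gamma\in\cB(f,g)$.

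Your pointwise two-time criterion $(\star)$ replaces the test-function machinery entirely and gives a single Borel functional $\Phi$ whose zero set is $\cB(f,g)$; this makes both the Borel property and stability (via the ``bad set'' $B_\gamma$) drop out of one computation, and you correctly isolate the only delicate point, namely producing a jointly Borel substitute $V(r,\gamma)$ for $|\dot\gamma_r|\,g(\gamma_r)$. The paper's approach, by contrast, stays closer to standard Sobolev-space technology and avoids having to argue that $(\star)$ is equivalent to membership in $W^{1,1}$ with the derivative bound. Two small remarks on your write-up: in the $\sim$-invariance step you write ``$u\circ\psi$ is absolutely continuous'', but $u=f\circ\gamma$ is only an element of $W^{1,1}$---it is its continuous representative $\bar u$ that you compose with $\psi$ (your use of Luzin's property $(N)$ for $\varphi=\psi^{-1}$ is exactly what is needed to know $\bar u\circ\psi=u\circ\psi$ $\Leb{1}$-a.e.); and in the converse of $(\star)$ the phrase ``letting $t\to s$ along $G$'' should really be: for $s,s'\in G$, pick $t$ outside the two null slices, apply the triangle inequality, and let $t\to s'$ using the continuity of $W_\gamma$. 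You could also have used $(\star)$ for $\sim$-invariance (since $W_{\tilde\gamma}=W_\gamma\circ\psi$ and $\psi\times\psi$ preserves Lebesgue-null sets), which would make the whole proof uniform.
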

\begin{proof}
Stability is simple to check: if, by contradiction, it were $\gamma\in\cI(g)\setminus\cB(f,g)$ and 
$\sfs_{[a_n,b_n]}\gamma\in\cB(f,g)$ with $a_n\downarrow 0$
and $b_n\uparrow 1$, we would get $f\circ\gamma\in W^{1,1}(a_n,b_n)$ and
  $|\frac\d{\d t}f\circ\gamma|\le |\dot\gamma|g\circ\gamma\in L^1(0,1)$
  $\Leb 1$-a.e. in $(a_n,b_n)$. Taking limits, we would obtain $\gamma\in\cB(f,g)$, a contradiction.
  
  For the proof of $\sim$-invariance we note that, first of all, that Lemma~\ref{lem:souslinac} with $F(t,\gamma):=g(\gamma_t)|\dot\gamma_t|$
  guarantees that $\cI(g)$ is a $\sim$-invariant Borel set, provided we define $F$ using a Borel representative of $|\dot\gamma|$; this can be achieved,
  for instance, using the $\liminf$ of the metric difference quotients. Analogously, the set
  $$
  {\sf L}:=\bigl\{\gamma\in\AC([0,1];X)\;:\; \int_0^1 |f(\gamma_t)|\,\d t<\infty\bigr\}
  $$
  is Borel. Now, $\gamma \in \cB(f,g)$ if and only if $\gamma\in\cI(g)\cap\sf L$ and
  \begin{equation}\label{eqn:w11an}
   \left| \int_0^1 \phi'(t) f( \gamma_t) \, \d t \right| \leq \int_0^1 | \phi (t) | g(\gamma_t)|\dot \gamma_t | \,  \d t \qquad \text{ for all }\phi \in W
  \end{equation}
  with  $W=\{ \phi \in\AC([0,1];[0,1]) \; : \; \phi(0)=\phi(1)=0 \}$. Now, if both $\sf s$ and $\sfs^{-1}$ are absolutely continuous from
  $[0,1]$ to $[0,1]$, setting $\eta:=\gamma\circ\sfs$, we can use the change of variables formula to obtain that $(\phi\circ\sfs)' f\circ\eta\in L^1(0,1)$ 
  for all $\phi\in W$ and that
  $$
   \left| \int_0^1 (\phi \circ\sfs)'(r) f(\eta_r) \,\d r \right| \leq \int_0^1 | \phi\circ\sfs(r) | g(\eta_r)|\dot\eta_r | \,  \d r \qquad \text{ for all }\phi \in W.
  $$
  Since $W \circ\sfs = W$ we eventually obtain $\phi' f\circ\eta\in L^1(0,1)$ for all $\phi\in W$ (so that $f\circ\eta$ is locally integrable in $(0,1)$) and
  \begin{equation}\label{eqn:w11an}
   \left| \int_0^1 \phi'(r) f(\eta_r) \,\d r \right| \leq \int_0^1 | \phi(r) | g(\eta_r)|\dot\eta_r | \,  \d r \qquad \text{ for all }\phi \in W.
  \end{equation}
  It is easy to check that these two conditions, in combination with $\int_\eta g<\infty$, imply that $\eta\in\sf L$,
  therefore $f\circ\eta$ belongs to $\cB(f,g)$ and $\sim$-invariance is proved.

  In order to prove that $\cB(f,g)$ is Borel we follow a similar path: we already know that both $\cI(g)$ and $\sf L$ are
  Borel, and then in the class $\cI(g)\cap\sf L$ the condition \eqref{eqn:w11an}, now with $W$ replaced by a countable dense subset
  of $\rmC^1_c(0,1)$ for the $\rmC^1$ norm, provides a characterization of $\cB(f,g)$. Since for $\phi\in \rmC^1_c(0,1)$ fixed the maps
  $$
  \eta\in {\sf L}\mapsto \int_0^1 \phi'(r) f(\eta_r) \,\d r,\qquad\eta\mapsto\int_0^1 | \phi(r) | g(\eta_r)|\dot\eta_r | \,  \d r 
  $$
  are easily seen to be Borel in $\AC([0,1];X)$ (as a consequence of Lemma~\ref{lem:souslinac}, splitting in positive and negative part the first
  integral and using
  once more a Borel representative of $|\dot\eta|$ in the second integral)
  we obtain that $\cB(f,g)$ is Borel.
 \end{proof}

\begin{theorem}[Equivalence theorem]\label{thm:equivdeb} Any $f\in N^{1,p}(X,\sfd,\mm)$ belongs to $W^{1,p}(X,\sfd,\mm)$. Conversely,
for any $f\in W^{1,p}(X,\sfd,\mm)$ there exists a $\mm$-measurable
representative $\tilde{f}$ that belongs to $N^{1,p}(X, \sfd, \mm)$. 
More precisely, $\tilde f$ satisfies:
\begin{itemize}
\item[(i)] $\tilde f\circ\gamma\in\AC([0,1])$ for $q$-a.e. curve $\gamma\in\AC([0,1];X)$;
\item[(ii)]  $\tilde f\circ\sfj\pgamma\in\AC([0,1])$ for ${\rm Mod}_{p,\smm}$-a.e. curve $\pgamma$. 
\end{itemize}
\end{theorem}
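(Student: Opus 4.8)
The plan is to treat the two inclusions separately, the first being essentially immediate from \eqref{eq:bl2} and the second resting on the Good representative Theorem~\ref{lem:goodcont}. For $N^{1,p}\subset W^{1,p}$ I would argue as follows: if $f\in N^{1,p}$ with weak upper gradient $g$, the set $B\subset\Curvesnonpara X$ of nonparametric curves violating \eqref{eqn:ugin} is $\Md$-negligible, so $\sfi^{-1}(B)$ is $q$-negligible by \eqref{eq:bl2}. Since for every nonconstant $\gamma\in\AC^q([0,1];X)$ one has $(\sfi\gamma)_{\rm ini}=\gamma_0$, $(\sfi\gamma)_{\rm fin}=\gamma_1$ and $\int_{\sfi\gamma}g=\int_0^1 g(\gamma_t)|\dot\gamma_t|\,\d t$, while constant curves satisfy the inequality trivially, the $W^{1,p}$ inequality holds off $\sfi^{-1}(B)$, i.e.\ for $q$-a.e.\ curve, whence $f\in W^{1,p}$.

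For the converse I would first replace $f$ by a Borel representative $f_0$; since the $W^{1,p}$ property is invariant under modifications on $\mm$-negligible sets (a $q$-test plan charges no fixed-time slice sitting over an $\mm$-null Borel set), $f_0\in W^{1,p}$ with the same $g$. The first substantial step is to upgrade the endpoint inequality to the pointwise-along-the-curve form, namely to show that $\AC^q([0,1];X)\setminus\cB(f_0,g)$ is $q$-negligible: restricting any $q$-test plan $\rrho$ to a subinterval $[s,t]$ and reparameterizing affinely produces again a $q$-test plan, so the endpoint inequality holds on each $[s,t]$ for $\rrho$-a.e.\ curve, and letting $s<t$ range over the rationals forces $f_0\circ\gamma\in W^{1,1}(0,1)$ with $|\frac{\d}{\d t}(f_0\circ\gamma)|\le|\dot\gamma|\,g\circ\gamma$ for $\rrho$-a.e.\ $\gamma$. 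I expect this restriction/reparameterization argument (or a direct appeal to the corresponding result of \cite{Ambrosio-Gigli-Savare12}) to be the main obstacle, together with the bookkeeping needed to pass between the parametric ($M$) and nonparametric ($J$) images.

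Once this is available, I would set $\Gamma':=\cI(g)\setminus\cB(f_0,g)$, which is Borel, stable and $\sim$-invariant by Lemma~\ref{lem:allgood} and $q$-negligible by the previous step; Theorem~\ref{teo:popen}(ii) then yields that $M\bigl(\Gamma'\cap\AC^\infty([0,1];X)\bigr)$ is $\Md$-negligible. Writing $\Gamma:=\{\gamma\in\AC^\infty([0,1];X):\ f_0\circ\gamma\text{ has a continuous representative}\}$, a Lipschitz curve lying in $\cB(f_0,g)$ belongs to $\Gamma$, so
$$
M\bigl(\AC^\infty([0,1];X)\setminus\Gamma\bigr)\subset M\bigl(\Gamma'\cap\AC^\infty([0,1];X)\bigr)\cup\Bigl\{\mu:\ \int_X g\,\d\mu=\infty\Bigr\},
$$
the inclusion of the residual piece following from $\int_X g\,\d J\gamma\le\Lip(\gamma)\int_X g\,\d M\gamma$, which shows that $\gamma\notin\cI(g)$ forces $\int_X g\,\d M\gamma=\infty$. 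The right-hand side is $\Md$-negligible by the above together with Proposition~\ref{prop:prop}(ii), so by subadditivity $\Md\bigl(M(\AC^\infty([0,1];X)\setminus\Gamma)\bigr)=0$. This is exactly the hypothesis of Theorem~\ref{lem:goodcont}, which I would invoke to produce a $\mm$-measurable representative $\tilde f$ of $f$ enjoying properties (i) and (ii) of that statement.

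Finally I would verify $\tilde f\in N^{1,p}$. For a nonparametric curve $\pgamma$ with $\sfj\pgamma\in\cB(f_0,g)$ and $\tilde f\circ\sfj\pgamma\equiv f_{\sfj\pgamma}$ the inequality $|\tilde f(\pgamma_{\rm fin})-\tilde f(\pgamma_{\rm ini})|\le\int_0^1|\frac{\d}{\d t}(f_0\circ\sfj\pgamma)|\,\d t\le\int_\pgamma g$ holds, and it is trivial when $\sfj\pgamma\notin\cI(g)$; hence the exceptional set sits inside $\{\pgamma:\tilde f\circ\sfj\pgamma\not\equiv f_{\sfj\pgamma}\}\cup\{\pgamma:\sfj\pgamma\in\Gamma'\}$. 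The first set is $\Md$-negligible by Theorem~\ref{lem:goodcont}(ii); for the second, $\Gamma'\cap\Lipc X\subset\Gamma'\cap\AC^\infty([0,1];X)$ gives $\Md\bigl(M(\Gamma'\cap\Lipc X)\bigr)=0$, and \eqref{eq:19} converts this into $\Md\bigl(J(\Gamma'\cap\Lipc X)\bigr)=0$, i.e.\ $\{\pgamma:\sfj\pgamma\in\Gamma'\}$ is $\Md$-negligible in $\Curvesnonpara X$. This yields $\tilde f\in N^{1,p}$ with weak upper gradient $g$, and properties (i), (ii) of the theorem follow directly from the corresponding statements in Theorem~\ref{lem:goodcont}.
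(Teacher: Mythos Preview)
Your proposal is correct and follows essentially the same route as the paper: reduce to a Borel representative, show that $\cI(g)\setminus\cB(f_0,g)$ is $q$-negligible (the paper phrases this via Fubini and a direct citation of \cite[Remark~4.10]{Ambrosio-Gigli-Savare12}, which is exactly the restriction/reparameterization mechanism you sketch), apply Lemma~\ref{lem:allgood} and Theorem~\ref{teo:popen}(ii), then invoke Theorem~\ref{lem:goodcont} and finish with the fundamental theorem of calculus. Your write-up is slightly more explicit than the paper's in spelling out the inclusion for $M(\AC^\infty\setminus\Gamma)$ and the bookkeeping of the final exceptional set, but the architecture and the key ingredients are identical.
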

\begin{proof} We already discussed the easy implication from $N^{1,p}$ to $W^{1,p}$, so let us focus on the converse one.
In the sequel we fix $f\in W^{1,p}(X,\sfd,\mm)$ and a $p$-weak upper gradient $g$.
By Fubini's theorem, it is easily seen that the space $W^{1,p}(X,\sfd,\mm)$ is invariant under modifications in $\mm$-negligible sets;
as a consequence, since the Borel $\sigma$-algebra is countably generated, we can assume with no loss of generality that $f$
is Borel. Another simple application of Fubini's theorem (see \cite[Remark~4.10]{Ambrosio-Gigli-Savare12}) shows that for 
$q$-a.e. curve $\gamma$ there exists
an absolutely continuous function $f_\gamma:[0,1]\to \R$ such that $f_\gamma=f\circ\gamma$ $\Leb{1}$-a.e. in $(0,1)$ and
$|\frac{\d}{\d t} f_\gamma|\leq|\dot\gamma|g\circ\gamma$ $\Leb{1}$-a.e. in $(0,1)$.
Since the $L^p$ integrability of $g$ yields that the complement of $\cI(g)$ is $q$-negligible,
we can use Lemma~\ref{lem:allgood} and Theorem~\ref{teo:popen}(ii) to infer that 
$\Sigma=\cI(g)\setminus \cB(f,g)$ satisfies  
$\Md\bigl(M(\Sigma\cap AC^\infty([0,1];X))\bigr)=0$. 

By Theorem~\ref{lem:goodcont} we obtain a $\mm$-measurable
representative $\tilde f$ of $f$ 
such that $\tilde{f}\circ\gamma\equiv f_\gamma$ for $q$-a.e. curve $\gamma$ and 
$\tilde f\circ\sfj\pgamma\equiv f_{\sfj\pgamma}$ for $\Md$-a.e. $\pgamma$. Hence, the fundamental theorem of calculus for absolutely
continuous functions gives
$$
|\tilde{f}(\pgamma_{fin})-\tilde{f}(\pgamma_{ini})|=|f_{\sfj\pgamma}(1)-f_{\sfj\pgamma} (0)|\leq\int_0^1g((\sfj\pgamma)_t)|\dot{(\sfj\pgamma)}_t|\,\d t=
\int_\pgamma g
$$
for $\Md$-a.e. $\pgamma$.
\end{proof}

\def\cprime{$'$} \def\cprime{$'$}

\end{document}